\RequirePackage[polutonikogreek,english]{babel}
\documentclass[a4paper, 11pt]{amsart}

\usepackage[dvipdfm, backref=page,colorlinks,bookmarksopen=true]{hyperref}
\usepackage{amsfonts,amsmath,amssymb}
\usepackage{amsthm}
\usepackage{mathrsfs}
\usepackage{xspace}
\usepackage{enumerate}
\usepackage{graphicx,psfrag}
\usepackage{a4wide}
\usepackage[latin1]{inputenc}  % accents 8 bits dans le source
\usepackage[T1]{fontenc}       % accents dans le DVI
%\usepackage[polutonikogreek,english]{babel}
%
%%%%%%%%%%%%%%%%%%
%\delimitershortfall=-0.1pt
\theoremstyle{plain}
\newtheorem{proposition}{Proposition}[section]
\newtheorem{theorem}[proposition]{Theorem}

\newtheorem{thmintro}{Theorem}

\newtheorem{lemma}[proposition]{Lemma}
\newtheorem{corollary}[proposition]{Corollary}

\theoremstyle{definition}
\newtheorem{definition}[proposition]{Definition}

\theoremstyle{remark}
\newtheorem{remark}[proposition]{Remark}
\newtheorem{example}[proposition]{Example}

\everymath{\displaystyle}
\newcommand{\RLF}{\mathrm{Rad}_{\mathrm{LF}}}
\DeclareMathOperator{\ch}{Ch} %
\DeclareMathOperator{\cl}{Res_1}%
\DeclareMathOperator{\Res}{Res_{\mathrm{sph}}}%
\DeclareMathOperator{\Fix}{Fix}%
\DeclareMathOperator{\Stab}{Stab}%
\DeclareMathOperator{\Conv}{Conv} %
\DeclareMathOperator{\proj}{proj} %
\DeclareMathOperator{\projc}{\pi_{\mathrm{Ch}}} %
\DeclareMathOperator{\projr}{\pi_{\mathrm{Res}}} %
\DeclareMathOperator{\Cc}{\mathscr{C}_1} %
\DeclareMathOperator{\Cr}{\mathscr{C}_{\mathrm{sph}}} %
\DeclareMathOperator{\Ch}{\mathscr{C}_{\mathrm{horo}}} %
\DeclareMathOperator{\Cg}{\mathscr{C}_{\mathrm{gp}}} %
\DeclareMathOperator{\Cu}{\mathscr{C}_{\mathrm{ultra}}} %
\DeclareMathOperator{\SL}{SL}
\DeclareMathOperator{\GL}{GL}
\DeclareMathOperator{\St}{St} %
\DeclareMathOperator{\Aut}{Aut}%
\DeclareMathOperator{\Isom}{Isom}%

\newcommand{\cat}{{\upshape CAT(0)}\xspace}
\newcommand{\bd}{\partial_\infty}
\newcommand{\bdfine}{\partial_\infty^\mathrm{fine}}
\newcommand{\Axi}{\mathcal A_\xi}
\newcommand{\HAxi}{ {}^{\frac 1  2} \! \mathcal A_\xi}

\newcommand{\R}{\mathbb{R}}                          % |R
                          % |Q
                          % |C
\newcommand{\N}{\mathbb{N}}                          % |N
\newcommand{\Z}{\mathbb{Z}}                          % /Z

%%%%%%%%%%%%%%%%%%%%%

\begin{document}
\title{Combinatorial and group-theoretic compactifications of buildings}

\author{Pierre-Emmanuel Caprace$^*$}
\address{Universit\'e catholique de Louvain, D\'epartement de Math\'ematiques, Chemin du Cyclotron 2, 1348 Louvain-la-Neuve, Belgium}
\email{pe.caprace@uclouvain.be}
\thanks{*F.N.R.S. Research Associate}

\author{Jean L\'ecureux}
\address{Université de Lyon;
Université Lyon 1;
INSA de Lyon;
Ecole Centrale de Lyon;
CNRS, UMR5208, Institut Camille Jordan,
43 blvd du 11 novembre 1918,
F-69622 Villeurbanne-Cedex, France}%???
\email{lecureux@math.univ-lyon1.fr}

\subjclass{20E42; 20G25, 22E20, 22F50, 51E24} % AMS classification numbers
\keywords{Compactification, building, Chabauty topology, amenable group}

\begin{abstract}
Let $X$ be a building of arbitrary type. A compactification $\Cr(X)$
of the set $\Res(X)$ of spherical residues of $X$ is introduced. We
prove that it coincides with the horofunction compactification of
$\Res(X)$ endowed with a natural combinatorial distance which we
call the \emph{root-distance}. Points of $\Cr(X)$ admit amenable
stabilisers in $\Aut(X)$ and conversely, any amenable subgroup
virtually fixes a point in $\Cr(X)$. In addition, it is shown that,
provided $\Aut(X)$ is transitive enough, this compactification also
coincides with the group-theoretic compactification constructed
using the Chabauty topology on closed subgroups of $\Aut(X)$. This
generalises to arbitrary buildings results established by
Y.~Guivarc'h and B.~R\'emy \cite{GuR} in the Bruhat--Tits case.
%\selectlanguage{francais}
%\medskip

%\noindent\textsc{Résumé}  Soit $X$ un immeuble de type quelconque. Nous introduisons une compactification $\Cr(X)$ de l'ensemble $\Res(X)$ des résidus sphériques de $X$. Nous prouvons qu'elle coïncide avec la compactification par les horofonctions de $\Res(X)$, muni d'une distance combinatoire naturelle appelée \emph{distance radicielle}. Les stabilisateurs de points  $\Cr(X)$ sont des sous-groupes moyennables de $\Aut(X)$. Réciproquement, tout groupe moyennable fixe virtuellement un point de $\Cr(X)$. De plus, nous montrons que, à condition que $\Aut(X)$ soit suffisamment transitif, cette compactification coïncide également avec la compactification définie en utilisant la topologie de Chabauty sur les sous-groupes fermés de $\Aut(X)$. On obtient ainsi une généralisation à des immeubles quelconques de résultats obtenus par Y.~Guivarc'h et B.~Rémy \cite{GuR} dans le cas d'immeubles de Bruhat--Tits.
\end{abstract}

\maketitle
%\selectlanguage{english}
\tableofcontents

\section*{Introduction}

The best known and probably most intuitively obvious
compactification of a non-compact Riemannian symmetric space $M$ is
the \textbf{visual compactification} $\overline M = M \cup \bd M$,
whose points at infinity consist in equivalence classes of geodesic
rays at finite Hausdorff distance of one another. Following
Gromov~\cite{BGS}, this compactification may be identified with the
\textbf{horofunction compactification} $\Ch(M)$, whose points at
infinity are Busemann functions. This canonical identification holds
in fact for any \cat metric space, see \cite[Theorem~II.8.13]{BH}.

Another way to approach to visual compactification of $M$ is the
following. Using the visual map which associates to every pair of
points $p, q \in M$ the direction at~$p$ of the geodesic segment
$[p, q]$, it is possible to associate to every point of $M$ a unique
element of the unit tangent ball bundle over $M$. The total space of
this bundle being compact, one obtains a compactification by passing
to the closure of the image of $M$; this coincides with the visual
compactification $\overline M$. Here again, the construction has a
natural analogue which makes sense in any locally compact \cat space
$X$ provided that the space of directions $\Sigma_p X$ at every
point $p \in X$ is compact. This condition is automatically
satisfied if $X$ is geodesically complete (\emph{i.e.} every
geodesic segment may be extended to a bi-infinite geodesic line,
which need not be unique) or if $X$ has the structure of a \cat cell
complex. In the latter case, each space of direction $\Sigma_p X$ is
endowed with the structure of a finite cell complex.

This suggests to modify the above construction of the visual
compactification as follows. Assume $X$ is a locally finite \cat
cell complex. Then the space of direction $\Sigma_p X$ has a
cellular structure; one denotes by $\St(p)$ the corresponding set of
cells. Associating to each point its support, one obtains a
canonical map  $\Sigma_p X \to \St(p)$. Pre-composing with the
afore-mentioned visual map, one obtains a map $X \to \prod_{p \in X}
\St(p)$. The closure of this map is called the \textbf{combinatorial
compactification} of $X$. It should be noted that the above map is
not injective in general: two points with the same support are
identified.

\medskip
The main purpose of this paper is to pursue this line of thoughts in
the special case of \emph{buildings} of arbitrary type. Similar
developments in the case of \cat cube complexes are carried out in
the Appendix.

In the case of buildings, the relevant simplices are the so-called
\textbf{residues of spherical type}, also called \textbf{spherical
residues} for short. The above combinatorial compactification thus
yields a compactification of the set $\Res(X)$ of all spherical
residues, and the above `visual map' $\Res(X) \to \prod_{\sigma \in
\Res(X)} \St(\sigma)$ may be canonically defined in terms of the
\textbf{combinatorial projection}. Its closure is the the combinatorial compactification and will be denoted by $\Cr(X)$. 

The set $\Res(X)$ may moreover be endowed in the canonical with the
structure of a discrete metric space. For example, a graph structure
on $\Res(X)$ is obtained by declaring two residues adjacent if one
is contained in the other. We shall introduce a sligthly different
distance, called the \textbf{root-distance} which has the advantage
that its restriction to the chamber-set $\ch(X)$ coincides with the
gallery distance (see Section~\ref{sec:root-distance}). As any
proper metric space, the discrete metric space $\Res(X)$ admits a
\textbf{horofunction compactification} This turns out to coincide with the
combinatorial compactification (see Theorem~\ref{thm:horo}).

It is important to remark that the combinatorial compactification
does not coincide with the visual one. Although there are elementary
ways to establish the latter fact, strong evidence is provided by
the following result (see Theorem~\ref{moyennfix}).

\begin{thmintro}\label{amenable:intro}
Let $X$ be a locally finite building. Then every amenable subgroup
of $\Aut(X)$ has a finite index subgroup which fixes  some point in
$\Cr(X)$.

Conversely, the full stabiliser of every point of $\Cr(X)$ is a
closed amenable subgroup.
\end{thmintro}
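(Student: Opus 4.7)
The strategy splits into two independent halves.

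For the converse (stabilisers are closed and amenable), closedness is formal: $\Cr(X)$ embeds into the Hausdorff product $\prod_{\sigma \in \Res(X)}\St(\sigma)$ of finite discrete sets, on which $\Aut(X)$ acts continuously. To establish amenability of $\Stab(\xi)$, I would fix a reference chamber $c_0$ and consider the spherical residue $R_0 := \projc_{c_0}(\xi)$, which $\Stab(\xi)$ preserves setwise. Its pointwise stabiliser is a compact open (hence amenable) subgroup of $\Aut(X)$, so it suffices to analyse the quotient. That quotient acts on the remaining coherent data $(\projc_\sigma(\xi))_{\sigma}$ relative to $R_0$, which encodes a combinatorial ``direction'' of $\xi$. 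I expect this residual action to preserve a nested chain of half-apartments along which it acts by abelian translation groups, so that $\Stab(\xi)$ is an iterated extension of compact and abelian pieces, hence amenable.

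For the forward direction, let $H \leq \Aut(X)$ be amenable. Since $\Cr(X)$ is compact (being closed in the compact product $\prod_\sigma \St(\sigma)$) and metrisable, continuity of the $\Aut(X)$-action produces an $H$-invariant probability measure $\mu$ on $\Cr(X)$. The task is to upgrade $\mu$ to a finite $H$-orbit, since the setwise stabiliser of such an orbit is then a finite-index subgroup of $H$ fixing a point of $\Cr(X)$. I would stratify $\Cr(X)$ by the ``asymptotic rank'' of a point (roughly, the rank of the limiting residues $\projc_\sigma(\xi)$ as $\sigma$ recedes), verify that this stratification is $\Aut(X)$-equivariant, and choose a minimal-rank stratum $S$ meeting the support of $\mu$. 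Combinatorial rigidity of $S$ should then force ergodic components of $\mu|_S$ to be atomic, and their atoms constitute a finite $H$-orbit.

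The main obstacle lies in this last step: an $H$-invariant probability measure on a compact $H$-space does not in general yield a fixed point. The argument must exploit a specific rigidity property of $\Cr(X)$, namely that a point is uniquely determined by its projections to a countable, ordered family of spherical residues, and that these projections vary in a controlled way along sequences. Translating this rigidity into the conclusion that the ergodic components of $\mu$ are atomic is where the bulk of the work should concentrate. In the Bruhat--Tits setting Guivarc'h and R\'emy~\cite{GuR} achieve this via algebraic-group structure; for arbitrary buildings I would substitute a direct combinatorial induction on the root-distance, using the identification of $\Cr(X)$ with the horofunction compactification provided by Theorem~\ref{thm:horo} to reformulate the problem in terms of sublevel sets of horofunctions, which are controlled by combinatorial convexity.
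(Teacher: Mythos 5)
Both halves of your proposal stop short of an actual argument at precisely the points where the difficulty lies, and the paper proceeds differently in both cases. For the forward direction, you correctly identify the obstacle yourself: an $H$-invariant probability measure on the compact space $\Cr(X)$ does not in general yield a finite orbit, and your proposed remedy (``combinatorial rigidity forces ergodic components of $\mu|_S$ to be atomic'') is a hope rather than a proof --- no mechanism is given for why a minimal-rank stratum should carry only atomic invariant measures, and the strata of $\Cr(X)$ are themselves compactifications of lower-dimensional buildings (Theorem~\ref{thm:stratification}) on which non-atomic invariant measures could a priori live. The paper sidesteps measures entirely: it invokes the fixed-point theorem of \cite{Cap} (and \cite{CapraceLytchak}) for amenable groups acting on \cat spaces, which already produces a finite-index subgroup fixing a point of the refined visual bordification $X \cup \bdfine X$, and then transports that fixed point into $\Cr(X)$ via the equivariant map $\Theta$ of Theorem~\ref{=raff}.

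For the converse, your claim that the quotient of $\Stab(\xi)$ by a compact subgroup ``preserves a nested chain of half-apartments along which it acts by abelian translation groups'' is unsubstantiated and is essentially the hard content of the theorem in disguise: for non-affine buildings there is no translation structure to appeal to, and establishing that $G_\xi/\RLF(G_\xi)$ is virtually abelian is itself a deep result of \cite{Cap}. (Note also that $\Stab(\xi)$ contains only its \emph{intersection} with the pointwise stabiliser of $R_0$, so even the initial compact-normal-subgroup step needs care.) The paper instead argues by induction on $\dim(X)$: the combinatorial sectors pointing to $f$ form a filtering family of closed convex subsets (Proposition~\ref{mmapp2}), so by \cite{CapraceLytchak} and \cite{BalserLytchak} the intersection of their visual boundaries admits a canonical barycentre $\xi \in \bd X$ fixed by $G_f$; one then passes to the transverse building $X_\xi$, of strictly smaller dimension by Remark~\ref{rem:dim(X)}, where $f$ determines a point $f' \in \Cr(X_\xi)$ fixed by $G_f$, and concludes by induction that $G_f$ fixes a point of $X \cup \bdfine X$, whence amenability by \cite[Theorem~1.5]{Cap}. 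If you wish to avoid citing these external results, you would have to reprove a substantial part of the structure theory of amenable group actions on \cat spaces, which your sketch does not do.
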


In the special case of Bruhat--Tits buildings, a similar statement
was established in~\cite[Theorem~33]{GuR} using another compactification called the group-theoretic compactification. 
The construction of the latter goes back to an idea of
Y.~Guivarc'h in the case of symmetric spaces and be be outlined as follows. A symmetric
space $M$ embeds in the space of closed subgroups of $\Isom(M)$ by
attaching to each point its isotropy group. Since the space of
closed subgroups endowed with Chabauty topology is compact, one
obtains a compactification by passing to the closure. This yields
the \textbf{group-theoretic compactification } $\Cg(M)$. This turns
out to be equivariantly isomorphic to the maximal Satake and
Furstenberg compactifications (see  \cite{GJT}, \cite{BJ}). In the
case of buildings, since points with the same support have identical
stabilisers, this approach cannot offer better than a
compactification of the set $\Res(X)$.

\begin{thmintro}\label{chab:intro}
Assume that $\Aut(X)$ acts strongly transitively. The
group-theoretic compactification  $\Cg(X)$ is
$\Aut(X)$-equivariantly homeomorphic to the maximal combinatorial
compactification $\Cr(X)$. More precisely, a sequence $(R_n)$ of
spherical residues converges to some $\xi \in \Cr(X)$ if and only if
the sequence of their stabilisers $(G_{R_n})$ converges to the
locally finite radical of $G_\xi$ in the Chabauty topology.
\end{thmintro}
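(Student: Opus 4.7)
The plan is to define an equivariant map $\Phi \colon \Cr(X) \to \Cg(X)$ by sending a spherical residue $\sigma$ to its stabiliser $G_\sigma$ and extending by $\xi \mapsto \RLF(G_\xi)$, then to prove that $\Phi$ is a continuous bijection---hence a homeomorphism by compactness of $\Cr(X)$ and Hausdorffness of $\Cg(X)$. The whole theorem reduces to the convergence assertion: whenever $R_n \to \xi$ in $\Cr(X)$, the sequence $(G_{R_n})$ Chabauty-converges to $\RLF(G_\xi)$. By compactness of the Chabauty space, it is enough to identify every Chabauty accumulation point $H$ of $(G_{R_n})$ with $\RLF(G_\xi)$.

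For the inclusion $H \leq \RLF(G_\xi)$, continuity of the $\Aut(X)$-action on $\Cr(X)$ immediately gives $H \leq G_\xi$: any $h = \lim h_n$ with $h_n \in G_{R_n}$ satisfies $h \cdot \xi = \lim h_n \cdot R_n = \lim R_n = \xi$. Local ellipticity of $H$ will follow from the fact that, after extraction, the compact stabilisers $G_{R_n}$ are eventually contained in a common compact subgroup of $\Aut(X)$, a consequence of local finiteness of $X$ together with the convergence $R_n \to \xi$. Normality of $H$ in $G_\xi$ can then be derived by considering translates $gR_n$ for $g \in G_\xi$: each such sequence also converges to $\xi$, and a diagonal argument shows that every Chabauty accumulation point of $(gG_{R_n}g^{-1})$ coincides with $H$, forcing $gHg^{-1} = H$. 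These three properties---containment in $G_\xi$, local ellipticity, and $G_\xi$-normality---place $H$ inside $\RLF(G_\xi)$ by the very definition of the locally finite radical.

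The reverse inclusion $\RLF(G_\xi) \leq H$ is the crux of the argument and the place where strong transitivity is indispensable. Given $h \in \RLF(G_\xi)$, one must produce $h_n \in G_{R_n}$ with $h_n \to h$. The strategy is to use the combinatorial description of $\xi$---encoded precisely by the projection data defining $\Cr(X)$---to argue that $h$ displaces $R_n$ by a controlled amount which vanishes in the limit, and then to exploit strong transitivity of $\Aut(X)$ on suitable configurations (apartment containing $R_n$ and oriented towards $\xi$) in order to correct this displacement by an element of $G_{R_n}$ while staying locally elliptic. Once both inclusions are secured, injectivity of $\Phi$ follows: distinct boundary points $\xi \ne \xi'$ have different projection data on $\Res(X)$, and the fixed-point sets of $\RLF(G_\xi)$ and $\RLF(G_{\xi'})$ on $\Res(X)$ (recoverable from the groups themselves) can therefore be separated---again by strong transitivity, which guarantees that $G_\xi$ acts transitively on enough residues in each combinatorial class at infinity. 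The main obstacle I anticipate is the inclusion $\RLF(G_\xi) \leq H$: quantifying precisely how much of the radical can be realized as a Chabauty limit of residue stabilisers requires a delicate interplay between the structure of the amenable group $G_\xi$ (supplied by Theorem~\ref{amenable:intro}) and the strong transitivity hypothesis, and is the step that is most likely to demand building-specific geometric input rather than purely formal arguments.
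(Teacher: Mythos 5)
Your high-level scheme (define $\Phi : \Cr(X) \to \Cg(X)$ by $\sigma \mapsto G_\sigma$ and $\xi \mapsto \RLF(G_\xi)$, show it is a continuous bijection, conclude by compactness) matches the paper's; the statement $H \le G_\xi$ is also proved the same way (Lemma~\ref{stab}). But the argument you sketch for the inclusion $H \le \RLF(G_\xi)$ contains a concrete error. You assert that ``after extraction, the compact stabilisers $G_{R_n}$ are eventually contained in a common compact subgroup of $\Aut(X)$.'' This is false whenever $R_n \to \xi$ with $\xi$ at infinity: in a tree, for instance, the stabilisers of vertices marching towards an end converge in the Chabauty sense to the horospherical subgroup, which is not compact, and no cofinal collection of $G_{R_n}$ fits inside a single compact group. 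If your claim were true the limit $H$ would itself be compact, which would make the entire theorem trivial and wrong. The correct mechanism (Lemma~\ref{chabcomb}) is quite different: one notes that every $g \in H$ is a limit of elliptic elements $g_n \in G_{R_n}$, that the set of elliptic isometries is closed in $\Aut(X)$ when $X$ is a locally finite \cat complex (since each automorphism is either elliptic or hyperbolic and the pointwise stabilisers of bounded sets are open), so $H$ consists of periodic elements; then Lemma~\ref{per=lf}(i) --- which rests on the filtering property of sectors from Proposition~\ref{mmapp2} --- identifies the set of periodic elements of $G_\xi$ with $\RLF(G_\xi)$. Note in particular that the paper never needs to establish normality of $H$ in $G_\xi$ separately; your proposed normality argument via accumulation points of $(G_{gR_n})$ is in any case circular, since identifying those accumulation points with $H$ presupposes the uniqueness of the Chabauty limit, which is exactly what the whole inclusion argument is meant to deliver.

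The reverse inclusion $\RLF(G_\xi) \le H$, which you correctly flag as the crux, is handled in the paper by conjugating $R_n$ into a fixed apartment $A$ containing $Q(x,\xi)$ (using strong transitivity and compactness of $G_x$), then observing that for each root $\alpha \in \Phi_A(\xi)$ the sequence eventually lies in $\alpha$, so the root group $G_{(\alpha)}$ eventually fixes $R'_n$ and hence lies in the limit; Lemma~\ref{fixateurQ} then shows these root groups topologically generate $\Fix_G(Q(x,\xi)) = \RLF(G_\xi)$. Your description of this step (displacement bounds plus strong transitivity) is plausible in spirit but does not identify the key lemma about root groups generating the pointwise stabiliser of a convex set, and would need to be fleshed out into something like Lemma~\ref{fixateurQ} to close the argument. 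In sum, the outline is structurally reasonable, but the local-ellipticity step is wrong as stated and needs to be replaced by the closedness-of-elliptics argument together with the sector-filtering characterisation of $\RLF(G_\xi)$.
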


Recall that the \textbf{locally finite radical} of a locally compact
group $G$ is the unique subgroup $\RLF(G)$ which is
\textbf{(topologically) locally finite} (\emph{i.e.} all of whose
finitely generated subgroups are relatively compact), normal and
maximal for these properties. It was shown in~\cite{Cap} that a
closed subgroup $H$ of $\Aut(X)$ is amenable if and only if $H/
\RLF(H)$ is virtually Abelian.

It is shown in~\cite{GuR} that the group-theoretic
compactifications may be canonically identified with the
\textbf{polyhedral compactification} constructed by E.~Landvogt
in~\cite{Lan}. Theorem~\ref{chab:intro} may be viewed as an
extension of this to the case of arbitrary buildings.

\medskip
A central tool introduced in this work to study the combinatorial
compactification is the notion of \textbf{combinatorial sectors},
which extend to the general case the classical notion of sectors in
Bruhat--Tits theory. Given a point $\xi \in \Cr(X)$, we associate to
every $x \in \Res(X)$ as sector $Q(x, \xi)$ based at $x$ and
pointing to $\xi$ (see Section~\ref{sec:sectors}). Every sector is
contained in an apartment; the key property is that the collection
of all sectors pointing to $\xi \in \Cr(X)$ is \textbf{filtering};
in other words any two sectors pointing to $\xi$ contain a common
subsector (see Proposition~\ref{mmapp2}).

We emphasize that all of our considerations are valid for arbitrary
buildings and are of elementary nature; in particular, no use is
made of the theory of algebraic groups. Moreover, as it will appear
in the core of the paper, most of the results remain valid for
buildings which are not necessarily locally finite (in that case,
one uses the term \emph{bordification} instead of
\emph{compactification}).

\medskip
The paper is organised as follows. In a first section we introduce
and study the properties of a combinatorial distance on $\Res(X)$
which we call the root-distance. The next section is devoted to the
combinatorial compactification. Combinatorial sectors are introduced
and used to prove that every point of $\Cr(X)$ may be attained as
the limit of some sequence of residues all contained in a common
apartment. The third section is devoted to the horofunction
compactification and proves that in the case of $\Res(X)$ the
combinatorial and horofunction compactifications coincide. Chabauty
topology is studied in the next section, whose main goal is to prove
Theorem~\ref{chab:intro}. The next section studies the relationship
between the visual boundary and the combinatorial compactification.
The main results are a \textbf{stratification} of the combinatorial
compactification (Theorem~\ref{thm:stratification}) and a
description of $\Cr(X)$ as the quotient of the \textbf{refined
visual boundary} of $X$ which is a refined version of the visual
boundary introduced in~\cite{Cap} for arbitrary \cat
spaces. These results are used in the final section which proves
Theorem~\ref{amenable:intro}. Finally, the Appendix outlines similar
results in the case of finite-dimensional \cat cube complexes.

\subsection*{Acknowledgment}

The second author is very grateful to Bertrand R\'emy for his constant support.

\section{The root-distance on spherical
residues}\label{sec:root-distance}

\subsection{Preliminaries}
Throughout this paper we let $X$ be an arbitrary building of finite
rank and $G$ be its full automorphism group. We denote by $\ch(X)$
(resp. $\cl(X)$, $\Res(X)$) the set of chambers (resp. panels,
spherical residues) of $X$. Given a residue $\sigma$ of $X$, the
\textbf{star} of $\sigma$, denoted by $\St(\sigma)$, is the set of
all residues containing $\sigma$ in their boundaries, see
\cite[\S1.1]{Tits74}. We recall that, in the chamber system approach to
buildings, which is dual to the simplicial approach, a residue is
viewed as a set of chambers and the star is then nothing but the set
of all residues \emph{contained in} $\sigma$. This has no influence
on the subsequent considerations and the reader should feel free to
adopt the point of view which he/she is most comfortable with.

\subsection{The root-distance}

Our first task is to introduce a combinatorial distance on the set $\Res(X)$ of spherical residues. A natural
way to obtain such a distance is by considering the \textbf{incidence graph} of spherical residues, namely the
graph with vertex set $\Res(X)$ where two residues are declared to be adjacent if one is contained in the other.
However, the disadvantage of this graph is that the natural embedding of $\ch(X)$ in $\Res(X)$ is not isometric, when
$\ch(X)$ is endowed with the gallery distance. This causes some technical difficulties which we shall avoid by
introducing an alternative distance on $\Res(X)$.

Given $R_{1}, R_{2}\in \Res(X)$, let $A$ be an apartment containing them both. We denote by $\Phi_{A}(R_{1},
R_{2})$ the set of all half-apartments of $A$ containing $R_{1}$ but not $R_{2}$. This set is empty if and only
if $R_{1}$ is contained in $R_{2}$, since every residue coincides with the intersection of all half-apartments
containing it.  Notice moreover that the cardinality of the sets $\Phi_{A}(R_{1}, R_{2})$ and  $\Phi_{A}(R_{2},
R_{1})$  is independent of the choice of $A$. We define the \textbf{root-distance} $d(R_{1},R_{2})$ between
$R_{1}$ and $R_{2}$ to be half of the sum of their cardinalities. In symbols:
$$d(R_{1}, R_{2}) = \frac 1 2 |\Phi_{A}(R_{1}, R_{2})| + \frac 1 2 | \Phi_{A}(R_{2}, R_{1}) |.$$

Clearly the restriction of the root-distance to the chamber set coincides with the gallery distance. However,
checking that the root-distance indeed defines a metric on $\Res(X)$ requires some argument (see
Proposition~\ref{prop:RootDistance}). Before collecting this together with some other basic facts on the root
distance, we introduce some additional terminology.

A set of spherical residues $\mathcal R \subset \Res(X)$ is called \textbf{closed} if for all $R_{1}, R_{2} \in \Res(X)$, we have
$$R_{1} \subset R_{2} \in \mathcal R \hspace{.2cm}\Rightarrow \hspace{.2cm} R_{1}  \in \mathcal R.$$
It is called \textbf{convex} if it is closed and if for all $R_{1}, R_{2} \in \mathcal R$, we have
$\proj_{R_{1}}(R_{2}) \in \mathcal R$, where $\proj$ denotes the \textbf{combinatorial projection} (see
\cite[\S3.19]{Tits74}) or \cite[Section~4.9]{AB}). We recall that by definition we have 
$$\proj_{R_{1}}(R_{2}) = \bigcap \{ \proj_{R_1}(C) \; | \; C \in  \ch(X) \cap \St(R_2) \},$$
which allows one to recover the combinatorial projections amongst \emph{arbitrary residues} from projections of \emph{chambers}. 

Since any intersection of closed (resp. convex) subsets is closed (resp.
convex) and since the whole set $\Res(X)$ is so, it makes sense to consider the \textbf{closure} (resp. the
\textbf{convex hull}) of a subset $\mathcal R \subset \Res(X)$, which we denote by $\overline{\mathcal R}$
(resp. $\Conv(\mathcal R)$). The convex hull of two residues $R_1, R_2$ is denoted by $\Conv(R_1,R_2)$. Given an
apartment $A$ containing $R_1 \cup R_2$, the convex hull $\Conv(R_1, R_2)$ coincides with the intersection of
all half-apartments of $A$ containing $R_1 \cup R_2$. The following basic fact provides a convenient
characterisation of the combinatorial projection:

\begin{lemma}\label{lem:proj-conv}
Given two (spherical) residues $R, T$, the combinatorial projection $\proj_R(T)$ coincides with the unique
maximal residue containing $R$ and contained in $\Conv(R, T)$.
\end{lemma}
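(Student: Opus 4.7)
The plan is to verify three properties of $P := \proj_R(T)$: (i) $R$ is a face of $P$; (ii) $P \in \Conv(R,T)$; and (iii) every residue $Q$ with $R \subseteq Q$ and $Q \in \Conv(R,T)$ is a face of $P$. Together (i)--(iii) yield both maximality and uniqueness of the maximum. I work in the simplicial viewpoint throughout: each chamber-projection $\proj_R(C)$ is a chamber having $R$ as a face, and the intersection in the defining formula is the common simplicial face. Property (i) then reads off at once.

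For (ii), I would fix an apartment $A$ containing $R \cup T$ and an arbitrary half-apartment $\alpha$ of $A$ with $R \cup T \subseteq \alpha$, and produce a chamber $C$ of $A$ with $T \subseteq C$ and $C \subseteq \alpha$. This is possible because if $T$ does not lie on the wall bounding $\alpha$ then every chamber of $A$ having $T$ as a face is automatically on the $T$-side of that wall, while if $T$ lies on the wall one may still pick a chamber of $A$ on the $\alpha$-side having $T$ as a face. Convexity of $\alpha$ together with $R \cup C \subseteq \alpha$ then gives $\Conv(R,C) \subseteq \alpha$, so $\proj_R(C) \subseteq \alpha$. Since $P$ is a face of $\proj_R(C)$ by the defining formula, $P$ is a simplex of $\alpha$; as $\alpha$ was arbitrary, $P \in \Conv(R,T)$.

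For (iii), let $Q$ be such a residue and fix any chamber $C$ of $X$ with $T \subseteq C$. The standard axiom on chambers and spherical residues produces an apartment $A''$ containing $Q \cup C$, hence also $R \cup T$. Inside $A''$ the chamber $D := \proj_R(C)$ is characterized by $R \subseteq D$ together with the requirement that $D$ lie on the same side as $C$ of every wall of $A''$ containing $R$. The desired containment $Q \subseteq D$ reduces to checking that $D$ lies on the $Q$-side of every wall $H$ of $A''$ with $Q \not\subseteq H$. If $R \not\subseteq H$, then $R$ is strictly on the $Q$-side (as a face of $Q$), and $R \subseteq D$ forces $D$ on the same side. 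If $R \subseteq H$, the two closed half-apartments bounded by $H$ both contain $R$; the hypothesis $Q \in \Conv(R,T)$ then forces $T$ to lie strictly on the $Q$-side of $H$, for otherwise both half-apartments would contain $R \cup T$ and hence $Q$, yielding the contradiction $Q \subseteq H$. Thus $C \supseteq T$ is on the $Q$-side, and by the characterization so is $D$. Intersecting over all such $C$ gives $Q \subseteq P$.

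The main obstacle is the sub-case $R \subseteq H$ in the verification of (iii), where the convex-hull hypothesis $Q \in \Conv(R,T)$ must be upgraded from ``$Q$ lies in every half-apartment containing $R \cup T$'' to the sharper statement that $T$ itself lies strictly on the $Q$-side of $H$. Everything else amounts to elementary bookkeeping of wall positions inside a single apartment.
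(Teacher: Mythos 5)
Your proof is correct, and it is organised differently from the one in the paper. The paper first proves the abstract statement that $\Conv(R,T)$ contains a \emph{unique} maximal residue $\sigma\supset R$ (via a separating root $\alpha$ and the projection $\proj_{\sigma_1}(T)$), and then identifies $\sigma$ with $\proj_R(T)$ by two inclusions, the second of which uses the ``parallelism'' property that any wall containing both $R$ and $\proj_R(T)$ must also contain $T$. You instead prove directly that $\proj_R(T)$ is the \emph{maximum} of the residues of $\Conv(R,T)$ containing $R$: your step (ii) makes explicit the inclusion $\proj_R(T)\in\Conv(R,T)$ that the paper uses tacitly, and your step (iii) replaces the paper's uniqueness-plus-parallelism argument by reducing everything to the defining formula $\proj_R(T)=\bigcap_{C\supset T}\proj_R(C)$ together with the wall characterisation of the gate $\proj_R(C)$ and of the relation ``$Q$ is a face of $D$'' inside one apartment. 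Your route is slightly longer but more self-contained at the chamber level, and it yields the marginally stronger conclusion that the maximal element is in fact a maximum; the paper's route is shorter because it leans on standard properties of residue-to-residue projections. One small wording slip in your case $R\subseteq H$ of (iii): if $T$ fails to lie strictly on the $Q$-side it need not follow that \emph{both} closed half-apartments contain $R\cup T$ (only the non-$Q$-side one does, unless $T\subseteq H$); the contradiction $Q\subseteq H$ still follows, since the non-$Q$-side then contains $\Conv(R,T)\ni Q$ while the $Q$-side contains $Q$ by definition. This is cosmetic and does not affect the argument.
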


`Maximal' should be understood as `of maximal possible dimension', \emph{i.e.} of minimal possible rank.

\begin{proof}
Let $A$ be an apartment containing $\Conv(R, T)$ and $\sigma_1, \sigma_2 \in \Conv(R, T)$ be two maximal
residues containing $R$. Assume $\sigma_1$ and $\sigma_2$ are distinct. Then there is a half-apartment $\alpha$
of $A$ containing one but not the other. Without loss of generality $\sigma_1 \subset \alpha$ but $\sigma_2 \not
\subset \alpha$. Since $R \subset \sigma_1 \cap \sigma_2$ we have $R \subset \partial \alpha$. Therefore, if $T
\subset \alpha$, then $\Conv(R, T) \subset \alpha$ which contradicts $\sigma_2 \not \subset \alpha$. Thus $T$
meets in the interior of $-\alpha$. In particular, so does $\proj_{\sigma_1}(T)$. Since the latter is a spherical
residue containing $\sigma_1 \supset R$, we have $\sigma_1 =\proj_{\sigma_1}(T)$, which contradicts the fact
that $\sigma_1 $ is contained in $\alpha$.

This confirms that there is a unique maximal residue $\sigma \in \Conv(R, T)$ containing $R$. Since $\proj_R(T)
\supset R$, we have thus $\proj_R(T) \subset \sigma$. If the latter inclusion were proper, then there would
exist some root $\beta$ containing $\proj_R(T)$ but not $\sigma$. In particular $R$ and $\proj_R(T)$ are
contained in the wall $\partial \beta$. This implies that $T$ is also contained in $\partial \beta$. Therefore
so is $\Conv(R, T)$ since walls are convex. This contradicts $\sigma \not \subset \beta$.
\end{proof}

We next introduce the \textbf{interval} determined by two spherical residues $R_{1}, R_{2}$ as the set $[R_{1},
R_{2}]$ consisting of those $\sigma \in \Res(X)$ such that $d(R_{1}, R_{2}) = d(R_{1}, \sigma)  + d(\sigma,
R_{2})$.

%Here is how these notions are related to one another.

\begin{proposition}\label{prop:RootDistance}
We have the following.
\begin{itemize}
\item[(i)]  The root-distance turns the set $\Res(X)$ into a (discrete) metric space.

\item[(ii)] Retractions on apartments do not increase the root-distance.

\item[(iii)] For all $R_1, R_2 \in \Res(X)$, we have $\Conv(R_1, R_2) = \overline{[R_1, R_2]}$.

\item[(iv)] A set $\mathcal R \subset \Res(X)$ is convex if and only if it is closed and for all $R_1, R_2 \in
\mathcal R$, the interval $[R_1, R_2]$ is entirely contained in $\mathcal R$.
\end{itemize}
\end{proposition}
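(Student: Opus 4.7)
The proof rests on the wall-counting formula within a single apartment: for $R_1, R_2 \in \Res(X)$ contained in a common apartment $A$, writing $s_H(R, S) \in \{0, 1, 2\}$ for the quantity that is $2$ when $R$ and $S$ lie strictly on opposite sides of the wall $H$, $1$ when exactly one of them lies in $H$ while the other lies strictly on one side, and $0$ otherwise, one has
$$d(R_1, R_2) = \frac{1}{2} \sum_{H \text{ wall of } A} s_H(R_1, R_2).$$
Since each $s_H$ is a pseudo-metric on the three possible positions, this makes $d|_A$ a pseudo-metric on the residues lying in $A$. Symmetry of $d$ is immediate, and $d(R_1, R_2) = 0$ forces both $\Phi_A$-sets to be empty, hence $R_1 \subset R_2$ and $R_2 \subset R_1$, whence $R_1 = R_2$ because every spherical residue equals the intersection of the half-apartments containing it.

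The heart of the proof is (ii). Given a retraction $\rho \colon X \to A$ centered at a chamber $C \in A$, the plan is to inject the half-apartments contributing to $\Phi_A(\rho(R_1), \rho(R_2)) \cup \Phi_A(\rho(R_2), \rho(R_1))$ into those contributing to $\Phi_{A'}(R_1, R_2) \cup \Phi_{A'}(R_2, R_1)$ for an apartment $A'$ containing $R_1 \cup R_2$. The key tool is that $\rho$ restricts to a type-preserving isomorphism on every apartment through $C$, so a half-apartment of $A$ can be pulled back along $\rho|_{A_i}$ for an apartment $A_i$ containing both $C$ and $R_i$ (whose existence is guaranteed by the building axioms); a wall-by-wall bookkeeping, combined with the apartment-invariance of $|\Phi_{A'}(R_1, R_2)|$ recorded in the preliminaries, then yields the desired injection. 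Once (ii) is in hand, the general triangle inequality in (i) follows by choosing an apartment $A \supset R_1 \cup R_3$ and a retraction $\rho$ onto $A$ fixing $R_1$ and $R_3$; applying the apartment-level triangle inequality to $R_1, \rho(R_2), R_3$ and invoking (ii) for the pairs $(R_1, R_2)$ and $(R_2, R_3)$ closes the loop.

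For (iii), the wall-counting formula gives $\sigma \in [R_1, R_2]$ iff $s_H(R_1, R_2) = s_H(R_1, \sigma) + s_H(\sigma, R_2)$ at every wall $H$ of some (hence any) apartment through $R_1 \cup R_2$. A wall-by-wall comparison with the condition defining $\Conv(R_1, R_2)$ shows $[R_1, R_2] \subset \Conv(R_1, R_2)$, with genuine discrepancy only at walls where $R_1$ and $R_2$ lie strictly on a common side: there the interval forces $\sigma$ onto the same strict side, while $\Conv$ also allows $\sigma \subset H$. Since $\Conv(R_1, R_2)$ is closed, $\overline{[R_1, R_2]} \subset \Conv(R_1, R_2)$. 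Conversely, for $\sigma \in \Conv(R_1, R_2)$, set $\sigma' := \proj_\sigma(R_1)$; by Lemma~\ref{lem:proj-conv}, $\sigma$ is a face of $\sigma' \in \Conv(\sigma, R_1) \subset \Conv(R_1, R_2)$, and a wall-by-wall check places $\sigma'$ in $[R_1, R_2]$, whence $\sigma \in \overline{[R_1, R_2]}$. Statement (iv) is then a formal consequence: if $\mathcal R$ is convex, $[R_1, R_2] \subset \Conv(R_1, R_2) \subset \mathcal R$ for any $R_1, R_2 \in \mathcal R$; conversely, closedness of $\mathcal R$ together with stability under intervals and (iii) yield $\Conv(R_1, R_2) = \overline{[R_1, R_2]} \subset \mathcal R$, whence $\proj_{R_1}(R_2) \in \mathcal R$.

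The principal obstacle is (ii): the retraction $\rho$ is an isomorphism only on apartments containing the center $C$, whereas $R_1$ and $R_2$ typically lie in no apartment through $C$, so the wall-transfer argument must proceed via two auxiliary apartments (one through $C$ and $R_1$, another through $C$ and $R_2$) and leverage the apartment-invariance of the $\Phi$-counts to piece together the separating roots.
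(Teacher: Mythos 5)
Your wall-counting reformulation of the root-distance is correct, and it gives a clean proof of the triangle inequality \emph{for triples lying in a common apartment}; this is exactly the observation $\Phi_A(R_1,R_2)\subset\Phi_A(R_1,R)\cup\Phi_A(R,R_2)$ that the paper also uses, and your derivation of the global inequality in (i) from (ii), as well as your (iv), match the paper. But there is a genuine gap at the step you yourself single out as the heart of the matter, namely (ii). What you offer there is a plan (``the plan is to inject\dots'', ``a wall-by-wall bookkeeping \dots then yields the desired injection''), not an argument: the injection from the walls of $A$ separating $\rho(R_1)$ from $\rho(R_2)$ into the walls of $A'$ separating $R_1$ from $R_2$ is never constructed, and the obstruction you name --- that $R_1$ and $R_2$ need not lie in any apartment through the centre $C$, so the separating roots live in three different apartments $A'$, $A_1$, $A_2$ --- is precisely where the work has to happen. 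The paper resolves this quite differently: it first proves Lemma~\ref{lem:interval} (the open interval $]R_1,R_2[$ is non-empty unless one residue is a face of the other, established via geodesic segments in the \cat realisation), and then proves (ii) by induction on $d(R_1,R_2)$, inserting an intermediate residue $R\in{}]R_1,R_2[$, applying the induction hypothesis to the two shorter pairs, and using the apartment-level subadditivity for the triple $\rho(R_1),\rho(R),\rho(R_2)$; the base case $R_1\subset R_2$ is handled separately. Your proposal contains no substitute for Lemma~\ref{lem:interval} or for this induction.

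The same issue resurfaces in (iii). Your criterion ``$\sigma\in[R_1,R_2]$ iff $s_H(R_1,R_2)=s_H(R_1,\sigma)+s_H(\sigma,R_2)$ at every wall $H$ of an apartment through $R_1\cup R_2$'' tacitly assumes that $\sigma$ lies in such an apartment --- but an element of $[R_1,R_2]$ is a priori an arbitrary residue of the building, and placing it inside $\Conv(R_1,R_2)$ (hence inside such an apartment) is essentially the content of the inclusion $[R_1,R_2]\subset\Conv(R_1,R_2)$ you are trying to prove. The paper again handles this by a double induction on root-distances, repeatedly invoking Lemma~\ref{lem:interval} and explicitly justifying at one step that ``some apartment contains $R_1\cup R_2\cup R$''. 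Your reverse inclusion, via $\sigma'=\proj_\sigma(R_1)$, is a workable variant of the paper's argument (which instead takes a maximal residue of $\Conv(R_1,R_2)$ containing $\sigma$), but the asserted ``wall-by-wall check'' does need to be carried out, in particular to rule out $\sigma'\subset\partial\alpha$ when $R_1$ and $R_2$ lie strictly on the same side of $\partial\alpha$. In short: the soft parts are fine, but the two hard steps are exactly the ones left as sketches, and both require the interval lemma (or an equivalent device) that your write-up never supplies.
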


Before undertaking the proof, we record the following subsidiary fact which will be helpful in many arguments
using induction on the root-distance.

\begin{lemma}\label{lem:interval}
Let $R_1, R_2 \in \Res(X)$. Then the interval $[R_1, R_2]$ coincides with the pair $\{R_1, R_2\}$ if and only if
$R_1 \subset R_2$ or $R_2 \subset R_1$ and no residue other than $R_1$ or $R_2$ is sandwiched between them.
\end{lemma}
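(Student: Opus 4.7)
The plan is to characterize the condition $\sigma \in [R_1,R_2]$ via a partition of the roots of a common apartment. For any $\sigma \in [R_1,R_2]$, Proposition~\ref{prop:RootDistance}(iii) places $\sigma$ in $\Conv(R_1,R_2)$, so there is a common apartment $A$ containing all of $R_1,R_2,\sigma$. Classifying each root $\alpha$ of $A$ by the triple $(R_1\subseteq\alpha,\, R_2\subseteq\alpha,\, \sigma\subseteq\alpha)$, each of the six relevant $\Phi$-cardinalities decomposes into a sum over these eight classes. Substituting into the identity $d(R_1,\sigma)+d(\sigma,R_2)=d(R_1,R_2)$ and simplifying, one finds that $\sigma \in [R_1,R_2]$ if and only if two particular classes vanish: the roots containing both $R_1$ and $R_2$ but not $\sigma$, and the roots containing $\sigma$ but neither $R_1$ nor $R_2$.

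For the \emph{if} direction, assume without loss of generality $R_1\subset R_2$, so $\Phi_A(R_1,R_2)=\emptyset$ and a root contains $R_1$ iff it contains $R_2$. The two vanishing conditions above then reduce exactly to $R_1\subset\sigma$ and $\sigma\subset R_2$: any $\sigma \in [R_1,R_2]$ is sandwiched between $R_1$ and $R_2$. Under the no-sandwich hypothesis, such a $\sigma$ must be $R_1$ or $R_2$.

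For the \emph{only if} direction I argue by contraposition. If a strict sandwich $R_1 \subsetneq R \subsetneq R_2$ exists, the partition yields $\Phi_A(R_2,R_1)=\Phi_A(R_2,R)\sqcup\Phi_A(R,R_1)$ (every root containing $R_1$ contains $R$, and every root containing $R$ contains $R_2$), whence $d(R_1,R_2)=d(R_1,R)+d(R,R_2)$, so $R \in [R_1,R_2]\setminus\{R_1,R_2\}$. If $R_1,R_2$ are $\subset$-incomparable, both $\Phi_A(R_1,R_2)$ and $\Phi_A(R_2,R_1)$ are nonempty. Setting $\sigma := \proj_{R_1}(R_2)$, Lemma~\ref{lem:proj-conv} places $\sigma$ in $\Conv(R_1,R_2)$, and incomparability forces $\sigma \neq R_2$; when $\sigma \neq R_1$, the root-partition directly verifies $\sigma \in [R_1,R_2]$. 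In the degenerate subcase where both $\proj_{R_1}(R_2)=R_1$ and $\proj_{R_2}(R_1)=R_2$ (which occurs, for instance, when $R_1,R_2$ are facing chambers), one instead exhibits a panel along a minimal gallery between a chamber of $R_1$ and a chamber of $R_2$, and checks via the same root-partition that such a panel lies in $[R_1,R_2]$.

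The main technical obstacle is this last degenerate subcase: handling $\subset$-incomparable pairs whose combinatorial projections trivialise requires combining the gallery structure of $A$ with the root-class bookkeeping developed above to locate the intermediate witness panel.
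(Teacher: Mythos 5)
Your root-class bookkeeping is sound: for $R_1,R_2,\sigma$ in a common apartment $A$ one does get $d(R_1,\sigma)+d(\sigma,R_2)-d(R_1,R_2)=n_{110}+n_{001}$, where $n_{110}$ counts roots containing $R_1$ and $R_2$ but not $\sigma$ and $n_{001}$ counts roots containing $\sigma$ but neither $R_i$; both the sandwiched case and the case $\proj_{R_1}(R_2)\neq R_1$ then go through. Two issues remain. The minor one: you invoke Proposition~\ref{prop:RootDistance}(iii) to place $\sigma$ in $\Conv(R_1,R_2)$, but that proposition is proved \emph{after}, and by means of, the present lemma, so the citation is circular. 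It is repairable where you use it, namely in the nested case: if $R_1\subset R_2$, any apartment containing $R_2$ and $\sigma$ already contains the face $R_1$, so the common apartment exists for free. (Also, $R_1\subset R_2$ gives only that every root containing $R_1$ contains $R_2$, not an ``iff''; your reduction of the two vanishing conditions to $R_1\subset\sigma\subset R_2$ survives, but only because the one-directional implication suffices.)

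The serious gap is the degenerate subcase, which you flag as unresolved and for which your proposed witness is in fact wrong. Take $R_1,R_2$ to be two opposite panels of a spherical rank-two residue, say opposite vertices $v_1,v_4$ of a hexagonal apartment of type $A_2$. Then $\proj_{v_1}(v_4)=v_1$ and $\proj_{v_4}(v_1)=v_4$, so this is your degenerate case; one computes $d(v_1,v_4)=2$, whereas every panel $\sigma$ on a minimal gallery from a chamber of $v_1$ to a chamber of $v_4$ (and indeed every other panel and every chamber of the hexagon) satisfies $d(v_1,\sigma)+d(\sigma,v_4)=3$. The unique element of $]v_1,v_4[$ is the rank-two residue itself, at root-distance $1$ from each $v_i$. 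So the witness need not be a panel at all and may be of strictly larger corank than both $R_1$ and $R_2$; minimal galleries between chambers produce interval points only when $R_1,R_2$ are themselves chambers. This is precisely what the paper's geometric argument is designed to catch: either some geodesic $[p_1,p_2]$ in the \cat realisation, with $p_i$ supported by $R_i$, leaves $|R_1|\cup|R_2|$ and the support of an exit point is the witness, or all such geodesics stay inside, in which case $|R_1|\cap|R_2|\neq\varnothing$ and the residue $R_1\cap R_2$ supporting it is the witness --- the hexagon example falls in the second case. To complete your purely combinatorial route you would need to produce and verify this second kind of witness; the minimal-gallery panel does not do the job.
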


\begin{proof}
The `if' part is straightforward. Moreover, if $R_1 \subset R_2$ and $R$ is a residue with $R_1 \subset R
\subset R_2$, then $R \in [R_1, R_2]$. Therefore, it suffices to show that if $R_1 \cap R_2$ is different from
$R_1$ or $R_2$, then $]R_1, R_2[{}  := [R_1, R_2] \setminus \{R_1, R_2\}$ is non-empty.

Consider the \cat realisation $|X|$ of $X$ (see \cite{Dav98}). Recall that the \textbf{support} of a point $x
\in |X|$ is the unique minimal (\emph{i.e.} lowest dimensional) spherical residue $R$ such that $x \in |R|$.

\smallskip
Assume first that there exist points $p_1 \in |R_2|$ and $p_2 \in |R_2|$ such that $p_i$ is supported
by $R_i$ and that the geodesic segment $[p_1, p_2]$ is not entirely contained in $|R_1| \cup | R_2|$. Let then
$x$ be a point of $[p_1, p_2] \setminus (|R_1| \cup | R_2|)$ and let $R$ denote the spherical residue supporting
$x$. Clearly $R \neq R_1, R_2$. We claim that $R \in [R_1, R_2]$.

Let $A$ be an apartment containing $R_1$ and $R_2$. Then $R \subset A$. Since any root either contains $R$ or
does not, we have $\Phi_A(R_1, R_2) \subset \Phi_A(R_1, R) \cup \Phi_A(R, R_2)$ and similarly with $R_1$ and
$R_2$ interchanged. Thus it suffices to show that every root $\alpha$ containing $R$ but not $R_2$ also contains
$R_1$ and vice-versa. But if $\alpha$ does not contain $R_1$, it does not contain $p_1$ since $p_1$ lies in the
interior of $R_1$. Thus the wall $\partial \alpha$ does not separate $p_1$ from $p_2$, which contradicts the
fact that $x \in |R| \subset |\alpha|$. This proves the claim.

\smallskip
Assume in a second case that for all points $p_1, p_2$ respectively supported by $R_1,R_2$, the geodesic segment
$[p_1, p_2]$ lies entirely in $|R_1| \cup |R_2|$. Then $|R_1| \cap |R_2|$ is non-empty and $R:= R_1 \cap R_2$ is
thus a non-empty spherical residue. By the above, the residue $R$ is different from $R_1$ and $R_2$. We claim
that $R \in [R_1, R_2]$. Let $A$ be an apartment containing $R_1$ and $R_2$; thus $R \subset A$. As before, it
suffices to show that every root $\alpha $ of $A$ containing $R$ but not $R_1$ also contains $R_2$. If it
didn't, then $-\alpha$ would contain two points $p_1, p_2$ respectively supported by $R_1, R_2$. In particular
the geodesic segment $[p_1, p_2]$ is entirely contained in the interior of $-\alpha$ and, hence, it avoids $|R|
\subset \alpha$. This is absurd since $|R| = |R_1| \cap |R_2|$.
\end{proof}

\begin{proof}[Proof of Proposition~\ref{prop:RootDistance}]%\\
We start with the proof of (ii). Let $\rho$ be a retraction to some apartment $A$ and let $R_1, R_2 \in
\Res(X)$. We need to show that $d(\rho(R_1), \rho(R_2)) \leq d(R_1, R_2)$. We work by induction on $d(R_1,
R_2)$, the result being trivial if $R_1 = R_2$. Notice more generally that if $R_1 \subset R_2$, then the
restriction of $\rho$ to the pair $\{R_1, R_2\}$ is isometric, in which case the desired inequality holds
trivially. We may therefore assume that $R_1$ and $R_2$ are not containing in one another. By
Lemma~\ref{lem:interval}, this implies that the open interval $]R_1, R_2[$ is non-empty. Let $R \in {}]R_1, R_2[$.
Using the induction hypothesis, we deduce
$$\begin{array}{rcl}
d(R_1, R_2) &=& d(R_1, R) + d(R, R_2)\\
& \geq & d(\rho(R_1), \rho(R)) + d(\rho(R), \rho(R_2))\\
& \geq & d(\rho(R_1), \rho(R_2)),
\end{array}$$
where the last inequality follows since any root of $A$ either contains $R$ or does not, whence $\Phi_A(R_1,
R_2) \subset \Phi_A(R_1, R) \cup \Phi_A(R, R_2)$ and similarly with $R_1$ and $R_2$ interchanged.

\medskip \noindent
(i) The only non-trivial point to check is the triangle inequality. We have just observed along the way that
this inequality holds for triple of residues contained in a common apartment. The general case follows, using
the fact that retractions do not increase distances.

\medskip \noindent
(iii) We first use induction on $d(R_1, R_2)$ to prove that $[R_1, R_2] \subset \Conv(R_1, R_2)$.

Let thus $R \in [R_1, R_2]$. We shall show by induction on $d(R, R_2)$ that $R \in \Conv(R_1,R_2)$.

Assume first that $]R, R_2[$ contains some spherical residue $T$. Then
$$R \in [R_1, T] \subset \Conv(R_1, T) \subset \Conv(R_1, R_2),$$
where the first inclusion follows from the induction on $d(R_1, R_2)$ and the second from the induction on $d(R,
R_2) > d(T, R_2)$.

Assume now that $]R, R_2[$ is empty. If $R \subset R_2$, then obviously $R \in \Conv(R_1, R_2)$. In view of
Lemma~\ref{lem:interval} it only remains to deal with the case $R_2 \subsetneq R$. In particular $d(R_1, R) <
d(R_1, R_2)$, whence $[R_1, R] \subset \Conv(R_1, R)$ by induction. Since $\Conv(R_1, R)$ is closed, it contains
$R_2$ and we deduce that some apartment $A$ contains $R_1 \cup R_2 \cup R$. Finally we observe that $\Conv(R_1,
R)= \Conv(R_1, R_2)$, since the fact that $R \in [R_1, R_2]$ implies that any root of $A$ which contains $R$ but
not $R_2$ also contains $R_1$. Thus $R \in \Conv(R_1, R_2)$, which confirms the claim that $[R_1, R_2] \subset
\Conv(R_1, R_2)$. In particular $\overline{[R_1, R_2]} \subset \Conv(R_1, R_2)$ since convex sets are closed.

\smallskip Let now $x \in \Conv(R_1, R_2)$ and pick a maximal spherical residue $R \in \Conv(R_1, R_2)$ containing
$x$. We claim that $R \in [R_1, R_2]$. Let thus $\alpha$ be a root containing $R$ but neither $R_2$ in some
apartment $A$ containing $R_1 \cup R_2$. If $R_1 \not \subset \alpha$, then $\Conv(R_1, R_2) \subset -\alpha$
whence $R \subset \partial \alpha$. This implies that $\proj_R(R_2)$ is strictly contained in $-\alpha$, thereby
contradicting the maximality of $R$. This shows that every root containing $R$ but not $R_2$ also contains
$R_1$. A similar argument holds with $R_1$ and $R_2$ interchanged. This proves $R \in [R_1, R_2]$ as claimed.
Thus $x \in \overline{[R_1, R_2]}$, which finishes the proof of (iii).

\medskip \noindent
(iv) follows from (iii) since a set $\mathcal R \subset \Res(X)$ is convex if and only if it is closed and for
all $R_1, R_2 \in \mathcal R$, we have $\Conv(R_1, R_2) \subset \Res(X)$.
\end{proof}

The following shows that the combinatorial projection of residues is canonically determined by the root-distance. In the special case of projections of chambers, the corresponding statements are well known.

\begin{corollary}\label{cor:proj-interval}
For all $R, T \in \Res(X)$, the projection $\proj_{R}(T)$ coincides with the unique maximal element of $[R, T]$ which contains $R$. It is also the unique spherical residue $\pi \supset R$ such that
$$d(\pi, T) = \min \{d(\sigma, T) \; | \; \sigma \in \Res(X), \; \sigma \supset R\}.$$
\end{corollary}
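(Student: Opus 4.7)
Set $\pi := \proj_R(T)$. My plan is to establish the first characterization (as the unique maximal element of $[R,T]$ containing $R$) using Lemma~\ref{lem:proj-conv} and Proposition~\ref{prop:RootDistance}(iii), then to derive the distance characterization by a root-counting argument in a common apartment, with the projection-minimization property of $\pi$ as the essential input throughout.

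By Lemma~\ref{lem:proj-conv}, $\pi$ contains $R$ and is the unique residue of maximal dimension in $\Conv(R,T)$ containing~$R$. Proposition~\ref{prop:RootDistance}(iii) identifies $\Conv(R,T)$ with $\overline{[R,T]}$. If $\pi$ were in this closure but not in $[R,T]$ itself, it would be a proper face of some $\tau \in [R,T]$; this $\tau$ would then contain~$R$ (transitively, via $R \subset \pi \subset \tau$), lie in $\Conv(R,T)$, and have $\dim \tau > \dim \pi$, contradicting the maximality in Lemma~\ref{lem:proj-conv}. Hence $\pi \in [R,T]$. For the uniqueness statement, any $\sigma \in [R,T]$ with $R \subset \sigma$ lies in $\Conv(R,T)$ with $R$ as a face, so $\dim \sigma \leq \dim \pi$ by Lemma~\ref{lem:proj-conv}, with equality forcing $\sigma = \pi$.

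For the distance characterization, given $\sigma \supset R$ I would choose an apartment $B$ containing $\sigma$ and~$T$; it then also contains $R$ (since $R \subset \sigma$) and $\pi$ (since $\pi \in \Conv(R,T) \subset B$). Making the first argument explicit in $B$ produces the key root decompositions
\begin{equation*}
\Phi_B(R,T) = \Phi_B(\pi,T), \qquad \Phi_B(T,R) = \Phi_B(T,\pi) \sqcup \Phi_B(\pi,R),
\end{equation*}
both reflecting the fact that no wall of $B$ separating $\pi$ from $T$ can pass through $R$: otherwise the chamber of $R$ on $T$'s side of that wall would be strictly closer to $T$ than~$\pi$, contradicting $\pi = \proj_R(T)$. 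Combining these with the inclusions $\Phi_B(R,T) \subset \Phi_B(\sigma,T)$ and $\Phi_B(T,\sigma) \subset \Phi_B(T,R)$ (both immediate from $R \subset \sigma$), I would rewrite
\begin{equation*}
2\,d(\sigma,T) - 2\,d(\pi,T) \;=\; \bigl|\Phi_B(\sigma,T) \setminus \Phi_B(R,T)\bigr| + \bigl|\Phi_B(\pi,R)\bigr| - \bigl|\Phi_B(T,R) \setminus \Phi_B(T,\sigma)\bigr|.
\end{equation*}
The inequality $d(\sigma,T) \geq d(\pi,T)$ follows from the key inclusion $\Phi_B(T,R) \setminus \Phi_B(T,\sigma) \subset \Phi_B(\pi,R)$, verified as follows: a root $\alpha$ on the left-hand side satisfies $\alpha \supset T$, $\alpha \supset \sigma$, $\alpha \not\supset R$, so its wall passes through $R$ with $T$ on the $\alpha$-side, and the projection property places $\pi$ on the same side as $T$, giving $\alpha \supset \pi$.

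For the equality case $d(\sigma,T) = d(\pi,T)$, both $|\Phi_B(\sigma,T) \setminus \Phi_B(R,T)| = 0$ and the above inclusion must be an equality; tracing these constraints back, they force the chambers of $\sigma$ to lie among those of $\pi$ and $\sigma \in [R,T]$ with $R$ as face, whence $\sigma = \pi$ by the first characterization. The main obstacle is this equality case: verifying that the combinatorial equalities indeed force $\sigma$ into $[R,T]$ with maximal dimension containing $R$ requires a careful root-by-root bookkeeping, distinguishing roots whose walls pass through $R$ from those that do not and invoking the projection property of $\pi$ at each step.
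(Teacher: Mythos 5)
Your proof of the first assertion is essentially the paper's argument: both pass through $\proj_R(T)\in\Conv(R,T)=\overline{[R,T]}$ and use the maximality in Lemma~\ref{lem:proj-conv} to pull $\proj_R(T)$ out of the closure into $[R,T]$ itself, then to get uniqueness. For the second assertion the paper explicitly leaves the details to the reader, and your root-counting argument in a common apartment $B$ is a reasonable way to supply them: the two decompositions $\Phi_B(R,T)=\Phi_B(\pi,T)$ and $\Phi_B(T,R)=\Phi_B(T,\pi)\sqcup\Phi_B(\pi,R)$ are just a restatement, via $\Phi_B(R,\pi)=\varnothing$, of the fact that $\pi\in[R,T]$ (which part one already gives), and the displayed identity for $2d(\sigma,T)-2d(\pi,T)$ together with the inclusion $\Phi_B(T,R)\setminus\Phi_B(T,\sigma)\subset\Phi_B(\pi,R)$ then yields the inequality. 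One remark: the ``main obstacle'' you flag (the equality case) does not in fact require any further wall-by-wall inspection involving the projection property. Once you have extracted from $d(\sigma,T)=d(\pi,T)$ the two equalities $\Phi_B(\sigma,T)=\Phi_B(\pi,T)$ and $\Phi_B(T,\sigma)=\Phi_B(T,\pi)$, you can conclude directly: for any root $\alpha$ with $\pi\subset\alpha$ and $\sigma\not\subset\alpha$, either $T\subset\alpha$ (then $\alpha\in\Phi_B(T,\sigma)=\Phi_B(T,\pi)$, forcing $\pi\not\subset\alpha$, absurd) or $T\not\subset\alpha$ (then $\alpha\in\Phi_B(\pi,T)=\Phi_B(\sigma,T)$, forcing $\sigma\subset\alpha$, absurd), so $\Phi_B(\pi,\sigma)=\varnothing$; the symmetric argument gives $\Phi_B(\sigma,\pi)=\varnothing$, hence $\sigma=\pi$. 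This closes the gap cleanly and keeps the whole proof at the level of elementary set manipulations on the $\Phi_B$'s, which is exactly the ``same vein'' the paper alludes to.
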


\begin{proof}
By Proposition~\ref{prop:RootDistance}(iii), the projection $\proj_{R}(T)$ is contained in some spherical residue $\pi \in [R, T]$. In particular $\pi$ is contained in $\Conv(R, T)$ and contains $R$. Therefore we have  $\pi = \proj_{R}(T)$ by Lemma~\ref{lem:proj-conv}. Thus $\proj_{R}(T)$ is contained in the interval $[R, T]$ and the first assertion of the Corollary follows from Lemma~\ref{lem:proj-conv} since $[R, T] \subset \Conv(R, T)$ by Proposition~\ref{prop:RootDistance}(iii).

The second assertion follows from arguments in the same vein than those which have been used extensively in this section. The details are left to the reader.
\end{proof}

\section{Combinatorial compactifications}

\subsection{Definition}

The key ingredient for the construction of the combinatorial
compactifications is the \textbf{combinatorial projection}. Given a
residue $\sigma$, this projection is the map $\proj_\sigma : \ch(X)
\to \St(\sigma)$ which associates to a chamber $C$ the chamber of
$\St(\sigma)$ which is nearest to $C$, see \cite[\S4.9]{AB}. As recalled in the previous section, the combinatorial projection may be extended to a map defined
on the set of all residues of $X$. For our purposes, we shall focus
on spherical residues and view the combinatorial projection as a map
$$
\proj_\sigma : \Res(X) \to \St(\sigma).
$$

This allows one to define two maps
$$
\projc : \ch(X) \to \prod_{\sigma \in \cl(X)} \St(\sigma) : C
\mapsto \big(\sigma \mapsto \proj_\sigma(C)\big)
$$
and
$$
\projr : \Res(X) \to \prod_{\sigma \in \Res(X)} \St(\sigma): R
\mapsto \big(\sigma \mapsto \proj_\sigma(R)\big).
$$

The above products are endowed with the product topology, where each star is a discrete set of residues. This
allows one to consider the closure of the image of the above maps. In symbols, this yields the following
definitions:
$$
\Cc(X)  = \overline{\projc ( \ch(X)) } \hspace{1cm} \text{and} \hspace{1cm}  \Cr(X) =  \overline{\projr (
\Res(X)) }.
$$

It is quite natural to consider the space $\overline{\projr ( \ch(X)) }$ as well; in fact, we shall see in Proposition~\ref{prop:CcToCr} below that this is equivariantly homeomorphic to $\Cc(X)$. We shall also see that $\Cc(X)$ may be identified to a
closed subset of $\Cr(X)$.

If the building  $X$ is locally finite, then the star of each spherical residue is finite and hence the spaces
$\Cc(X)$ and $\Cr(X)$ are then compact, and even metrizable since $\Res(X)$ is at most countable. The $\Aut(X)$-action
on $X$ extends in a canonical way to actions on $\Cc(X)$ and $\Cr(X)$ by homeomorphisms; the action induced by an element $g \in Aut(X)$ is given by
$$g : \Cr(X) \to \Cr(X) : f \mapsto \big( \sigma \mapsto   gf(g^{-1}\sigma) \big).$$

\begin{definition}
The space $\Cc(X)$ and $\Cr(X)$ are respectively called the \textbf{minimal} and the \textbf{maximal
combinatorial bordifications} of $X$. When the building $X$ is locally finite, we shall use instead the term
\textbf{compactification}.
\end{definition}

This terminology is justified by the following.

\begin{proposition}\label{disccomb}
The maps $\projc$ and $\projr$ are $G$-equivariant and injective; moreover, they have discrete images. In
particular $\projc$ and $\projr$ are homeomorphisms onto their images.
\end{proposition}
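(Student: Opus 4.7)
The plan is to verify $G$-equivariance, injectivity, and discreteness of the image in that order; the homeomorphism conclusion will then be automatic, since $\Res(X)$ carries the discrete topology by Proposition~\ref{prop:RootDistance}(i). $G$-equivariance is a direct consequence of the $G$-equivariance of the combinatorial projection: the identity $g\,\proj_\sigma(R) = \proj_{g\sigma}(gR)$ yields $\projr(gR) = g\cdot \projr(R)$ by evaluation at each coordinate, and $\projc$ is handled identically.

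The key combinatorial input for injectivity is the self-coordinate identity $\proj_R(R) = R$, which follows from Lemma~\ref{lem:proj-conv} once one observes that $\Conv(R,R)$ consists of the faces of $R$ and that $R$ is the unique maximal residue of that set containing itself. Given distinct $R_1, R_2 \in \Res(X)$, I will separate cases. If $R_1 \subsetneq R_2$, then $R_2 \in \Conv(R_1, R_2)$ strictly contains $R_1$, so Lemma~\ref{lem:proj-conv} forces $\proj_{R_1}(R_2)$ to have strictly greater dimension than $R_1$; hence $\proj_{R_1}(R_2) \neq R_1 = \proj_{R_1}(R_1)$, so the coordinate $\sigma = R_1$ already separates the two images, and the case $R_2 \subsetneq R_1$ is symmetric. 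If $R_1, R_2$ are incomparable and share a common face $\sigma := R_1 \cap R_2$, then $\Conv(\sigma, R_i)$ equals the closure of $R_i$, so Lemma~\ref{lem:proj-conv} yields $\proj_\sigma(R_i) = R_i$ for each $i$, and $\sigma$ again distinguishes. The delicate remaining subcase is that of incomparable, face-disjoint $R_1, R_2$: here I will invoke Lemma~\ref{lem:interval} to produce a residue $R$ in the non-empty open interval $\,]R_1, R_2[\,$, and then use the minimality characterisation of Corollary~\ref{cor:proj-interval} to extract a face of $R_1$ pointing toward $R_2$ whose projection detects the difference. For $\projc$, the classical argument applies: if $C_1 \neq C_2$ are chambers, any panel on a minimal gallery between them serves as a distinguishing $\sigma$.

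For discreteness of the image, I fix $R \in \Res(X)$ and exploit the fact that $R$, being spherical, has only finitely many faces. The set $U := \bigcap_{\sigma \subseteq R}\{f : f(\sigma) = \proj_\sigma(R)\}$ is then a basic open neighbourhood of $\projr(R)$ in the product topology, and the injectivity case analysis guarantees that for any $R' \neq R$ some face $\sigma \subseteq R$ satisfies $\proj_\sigma(R') \neq \proj_\sigma(R)$; hence $U \cap \projr(\Res(X)) = \{\projr(R)\}$. The corresponding statement for $\projc$ is proved similarly, restricting to $\sigma \in \cl(X)$. Once injectivity and discreteness are established, both maps are continuous (their domain being discrete), injective, and have discrete image, and therefore are homeomorphisms onto their images. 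The main obstacle I anticipate is the face-disjoint incomparable subcase of injectivity, where extracting the distinguishing face requires care with the convex-hull geometry of $\Conv(R_1, R_2)$; all other cases reduce cleanly to the material already developed in Section~\ref{sec:root-distance}.
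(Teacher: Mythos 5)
Your proposal is correct in outline but follows a genuinely different route from the paper, and its one unfinished step deserves to be spelled out because the discreteness argument leans on it. The paper only writes out the chamber case: injectivity of $\projc$ is obtained by choosing a wall $H$ separating two distinct chambers in a common apartment and observing that the projections onto any panel of $H$ lie on opposite sides of $H$; discreteness is obtained by a sequence-extraction argument (if $\projc(C_n)\to\projc(C)$ with $C_n\neq C$, pass to a subsequence along which a single panel in the boundary of $C$ witnesses $\proj_\sigma(C_n)\neq C$, contradicting convergence); the case of $\projr$ is declared ``similar''. You instead prove the stronger statement that for any $R'\neq R$ some \emph{face} $\sigma$ of $R$ already satisfies $\proj_\sigma(R')\neq\proj_\sigma(R)$, and then read off discreteness from the explicit basic neighbourhood determined by the finitely many faces of $R$ (finitely many because the rank is finite; no local finiteness is needed). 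This buys a cleaner, non-sequential discreteness proof and an honest treatment of $\projr$, at the price of a case analysis whose last case you only sketch. That case does go through, but not with an arbitrary element of $\,]R_1,R_2[\,$: take $T\in\,]R_1,R_2[\,$ at \emph{minimal} root-distance from $R_1$. Minimality forces $\,]R_1,T[\,=\varnothing$ (any $\sigma\in\,]R_1,T[\,$ would lie in $[R_1,R_2]$ by the triangle inequality and be closer to $R_1$), so Lemma~\ref{lem:interval} says one of $R_1,T$ is a face of the other, and $d(T,R_2)=d(R_1,R_2)-d(R_1,T)<d(R_1,R_2)$. If $T\in\St(R_1)$, the minimality characterisation of Corollary~\ref{cor:proj-interval} gives $\proj_{R_1}(R_2)\neq R_1=\proj_{R_1}(R_1)$, so the face $\sigma=R_1$ distinguishes. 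If instead $T$ is a proper face of $R_1$, then $R_1\in\St(T)$ cannot minimise $d(\cdot,R_2)$ over $\St(T)$ (since $T$ itself does better), whence $\proj_T(R_2)\neq R_1=\proj_T(R_1)$ and the face $\sigma=T$ distinguishes. With this inserted, your claim that the distinguishing coordinate always lives among the faces of $R$ is justified in every case, and the rest of your argument (equivariance, the two comparable cases, the common-face case, the chamber case, and the homeomorphism conclusion) is sound. Be warned that the strengthened claim is genuinely needed and not obvious: coordinates such as $\sigma=R_1$ alone do not suffice (e.g.\ $\proj_{R_1}(R_2)=R_1$ for opposite panels of a rank-two spherical residue), so the case analysis, and in particular the completed last case, is doing real work.
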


\begin{proof}
We argue only with $\projc$, the case of $\projr$ being similar. The equivariance is immediate. We focus on the
injectivity. Let $C$ and $C'$ be distinct chambers in $X$. There exists an apartment, say $A$, containing them
both. These chambers are separated in $A$ by some wall $H$, so that the projections of $C$ and $C'$ on every
panel in $H$ cannot coincide. This implies that $\projc(C)\neq\projc(C')$ as desired.

Let now  $(C_n)_{n\geq 0}$ be a sequence of chambers such that the sequence $(\projc(C_n))$ converges to
$\projc(C)$ for some $C \in \ch(X)$. We have to show that $C_n=C$ for $n$ large enough. Suppose this is not the
case. Upon extracting a subsequence, we may assume that $C_n\neq C$ for all $n$. Then there is some panel
$\sigma_n$ in the boundary of $C$ such that $\proj_{\sigma_n}(C_n)\neq C$. Up to a further extraction, we may
assume that $\sigma_n$ is independent of $n$ and denote by $\sigma$ the common value. Thus, we have
$\proj_\sigma(C_n)\neq C$, which contradicts the fact that  $(\projc(C_n))$ converges to $\projc(C)$.
\end{proof}

In the case when $X$ is locally finite, Proposition~\ref{disccomb} implies that $\Cc(X)$ is indeed a
compactification of the set of chambers of $X$: In particular the discrete set $\projc(\ch(X))$ is open in
$\Cc(X)$, which is thus indeed a compactification of $\ch(X)$ in the locally finite case; a similar fact of
course holds for $\Cr(X)$.

The elements $\Cc(X)$ and $\Cr(X)$ are considered as functions which associate to every panel (resp. residue) a
chamber (resp. residue) in the star of that panel (resp. residue). In view of Proposition~\ref{disccomb} we may
-- and shall -- identify $\ch(X)$ and $\Res(X)$ to subsets of $\Cc(X)$ and $\Cr(X)$. In particular, it makes
sense to say that a sequence of chambers converges to a function in $\Cc(X)$.

\medskip%
We now take a closer look at the minimal bordification. The special case of a single apartment is
straightforward:

\begin{lemma}\label{remarque}
Let $f\in \Cr(X)$ and  let $(R_n)$ and $(T_n)$ be sequences of spherical residues in a common apartment $A$ such that
$(\projr(R_n))$ and $(\projr(T_n))$  both converge to $f$. Then for every half-apartment $\alpha$ in $A$, there
is some $N\in\N$ such that either $R_n \cup T_n \subset \alpha$  for all $n>N$, or $R_n \cup T_n \subset
-\alpha$ for all $n>N$, or  $R_n \cup T_n \subset \partial \alpha$  for all $n>N$.

Conversely, let $(R_n)$ be a sequence of spherical residues of $A$ such that for every half-apartment $\alpha$ in $A$,
there is some $N\in\N$ such that either  $R_n \cup T_n \subset \alpha$  for all $n>N$, or $R_n \cup T_n \subset
-\alpha$ for all $n>N$, or  $R_n \cup T_n \subset \partial \alpha$  for all $n>N$.
Then $(R_n)$ converges in $\Cr(A)$.

\smallskip The same statements hold for  sequences of chambers of $A$ and a point $f\in \Cc(X)$.
\end{lemma}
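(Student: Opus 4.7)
The plan is to reduce both directions to an elementary trichotomy describing the projection of a spherical residue $R \subset A$ onto a panel $\sigma \subset A$ lying on a wall $\partial \alpha$. Writing $C_+, C_-$ for the two chambers of $A$ containing $\sigma$ and lying in $\alpha, -\alpha$ respectively, I would first establish that $\proj_\sigma(R)$ equals $C_+$, $C_-$, or $\sigma$ according as $R \subset \alpha\setminus\partial\alpha$, $R \subset -\alpha\setminus\partial\alpha$, or $R \subset \partial\alpha$. This is a direct consequence of Lemma~\ref{lem:proj-conv} (or equivalently of the formula $\proj_\sigma(R)=\bigcap_{C \supset R}\proj_\sigma(C)$ stated in the preliminaries): a chamber of $A$ containing $R$ must lie strictly on the same side of $\partial\alpha$ as $R$ unless $R \subset \partial\alpha$, in which case chambers of $R$ occur on both sides and their projections intersect in $\sigma$.

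For the forward direction, fix a half-apartment $\alpha \subset A$ and pick any panel $\sigma \subset \partial \alpha$. Since $\St(\sigma)$ is discrete and $\projr(R_n), \projr(T_n) \to f$ in the product topology, the sequences $\proj_\sigma(R_n)$ and $\proj_\sigma(T_n)$ are eventually constant equal to $f(\sigma)$. Applying the trichotomy to this common value immediately places $R_n$ and $T_n$ on the side of $\partial \alpha$ prescribed by $f(\sigma)$, yielding the three cases in the conclusion.

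For the converse (reading the $T_n$ in the hypothesis as a typo for $R_n$) I would fix $\tau \in \Res(A)$ and show that $\proj_\tau(R_n)$ is eventually constant. By Lemma~\ref{lem:proj-conv}, $\proj_\tau(R_n)$ is the unique maximal residue containing $\tau$ inside $\Conv(\tau,R_n)$, which in turn is the intersection of half-apartments of $A$ containing $\tau \cup R_n$. A half-apartment whose boundary wall does not contain $\tau$ either contains every residue containing $\tau$ or is disjoint from all of them (depending on which side $\tau$ lies on), so such walls impose no constraint on which residues containing $\tau$ are inside $\Conv(\tau, R_n)$; only walls through $\tau$ matter. Since $\tau$ is spherical its stabiliser in the Coxeter group of $A$ is finite, so only finitely many walls of $A$ contain $\tau$. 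For each of them the hypothesis furnishes a threshold beyond which the position of $R_n$ is frozen; taking the maximum of these finitely many thresholds shows that $\proj_\tau(R_n)$ is eventually constant, and hence $(R_n)$ converges in $\Cr(A)$. The chamber statement is the same argument with $\tau$ restricted to panels and $R_n, T_n$ to chambers.

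The main obstacle is really only the trichotomy for panel projections; once this dictionary between sides of walls and values of $\proj_\sigma$ is in place, both directions of the lemma are bookkeeping in the product topology combined with the finiteness of the set of walls through a spherical residue. The one small subtlety to be careful about is the justification that walls not through $\tau$ contribute no information to the maximal residue $\supset \tau$ in $\Conv(\tau, R_n)$, which is why the argument is phrased through Lemma~\ref{lem:proj-conv} rather than through a direct induction on the root-distance.
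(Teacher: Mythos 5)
Your proposal is correct and follows essentially the same route as the paper: the same trichotomy for $\proj_\sigma(R)$ in terms of the position of $R$ relative to $\partial\alpha$ gives the forward direction directly from the definition of convergence, and the converse is the paper's argument that only the finitely many walls through a spherical residue $\tau$ constrain the maximal element of $\St(\tau)$ inside $\Conv(\tau,R_n)$, so that Lemma~\ref{lem:proj-conv} forces $\proj_\tau(R_n)$ to stabilise. You also correctly read the $T_n$ in the converse as a typo for $R_n$.
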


\begin{proof}
We deal only with the maximal bordification, the case of $\Cc(X)$ being similar but easier.

Let $H$ be a wall of $A$, let $\sigma \subset H$ and $C,C'$ be the two chambers of $A$ containing $\sigma$. For
any spherical residue $R$ of $A$, the projection $\proj_{\sigma}(R)$ coincides with $C$ (resp. $C'$) if and only
if $R$ lies on the same side of $H$ as $C$ (resp. $C'$). It coincides with $\sigma$ itself if and only if $R$
lies on $H$. The result now follows from the very definition of the convergence in $\Cr(X)$.

Let conversely $(R_n)$ be a sequence of spherical residues of $A$ which eventually remain on one side of every
wall of $A$. Let $R \in \Res(A)$. Let $\Phi$ denote the set of all roots $\alpha$ such that $R \subset \partial
\alpha$ and $(R_n)$ eventually penetrates and remains in $\alpha$. Since $\Phi$ is finite, there is some $N$
such that $R_n \subset \bigcap_{\alpha \in \Phi}\alpha$ for all $n > N$. In particular $\proj_R(R_n) \subset
\bigcap_{\alpha \in \Phi}\alpha$ for all $n > N$. It follows from Lemma~\ref{lem:proj-conv} that $\proj_R(R_n)$
coincides with the unique maximal spherical residue contained in $\bigcap_{\alpha \in \Phi}\alpha$ and
containing $R$. In particular, this is independent of $n > N$. Thus $(R_n)$ indeed converges in $\Cr(A)$.
\end{proof}

The subset of $\Cc(X)$ consisting of limits of sequences of chambers of an apartment $A$ is denoted by $\Cc(A)$,
and $\Cr(A)$ is defined analogously. One verifies easily that this  is consistent with the fact that $\Cc(A)$
(resp. $\Cr(A)$) also denotes the minimal (resp. maximal) bordification of the thin building $A$. However, it is
not clear \emph{a priori} that for every $f\in \Cc(X)$ belongs to $\Cc(A)$ for some apartment. Nevertheless, it
turns out that this is indeed the case:

\begin{proposition}\label{prop:phi}
The set $\Cc(X)$ is the union of $\Cc(A)$ taken over all apartments $A$. Similarly $\Cr(X)$ is covered by the
union of $\Cr(A)$ over all apartments $A$.
\end{proposition}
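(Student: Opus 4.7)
The plan is to show that every $f \in \Cr(X)$ approached by a sequence $(R_n) \to f$ admits a convergent sequence contained in a single apartment. I fix a chamber $C_0$ in a reference apartment $A_0$. For each $n$, the building axioms supply an apartment $A_n$ containing $C_0 \cup R_n$; let $\phi_n \colon A_n \to A_0$ be the chamber-complex isomorphism fixing $A_0 \cap A_n$ pointwise, equivalently the restriction to $A_n$ of the standard retraction $\rho_{A_0, C_0} \colon X \to A_0$ centred at $C_0$. Set $R'_n := \phi_n(R_n) \in A_0$; this is the candidate sequence.

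The main step is to verify that $(R'_n)$ converges to $f$ in $\Cr(X)$. By Lemma~\ref{remarque} applied inside $A_0$, convergence in $\Cr(A_0)$ reduces to showing that for every half-apartment $\alpha$ of $A_0$, the sequence $(R'_n)$ eventually lies on a fixed side of $\partial \alpha$ or on $\partial \alpha$ itself. The key input is the identity $\proj_\sigma(R'_n) = \phi_n(\proj_\sigma(R_n))$ for $\sigma \in \Res(X)$ contained in $A_0 \cap A_n$: by Lemma~\ref{lem:proj-conv} the projection $\proj_\sigma(R_n)$ is the unique maximal residue of $\Conv(\sigma, R_n)$ containing $\sigma$, and $\phi_n$ maps $\Conv(\sigma, R_n)$ isometrically onto $\Conv(\sigma, R'_n)$ by Proposition~\ref{prop:RootDistance}(iii), preserving this characterisation. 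The hypothesis $(R_n) \to f$ stabilises $\proj_\sigma(R_n)$ at $f(\sigma)$, forcing $\proj_\sigma(R'_n)$ to stabilise as well, hence $(R'_n)$ to stabilise relative to each wall of $A_0$.

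To conclude $f \in \Cr(A_0) \subset \Cr(X)$, it remains to verify stabilisation of $\proj_\sigma(R'_n)$ at $f(\sigma)$ for residues $\sigma$ not contained in $A_0$. Using the factorisation $\proj_\sigma(R) = \bigcap\{\proj_\sigma(C) \mid C \in \ch(X) \cap \St(R)\}$ recalled in the excerpt, this reduces to chamber projections, which are controlled using the non-expansion of the root-distance under retractions (Proposition~\ref{prop:RootDistance}(ii)) together with the convex-hull identification of Proposition~\ref{prop:RootDistance}(iii). The argument for $\Cc(X)$ is an immediate specialisation, with chambers in place of residues. The principal obstacle lies in this last identification: the retraction $\rho_{A_0, C_0}$ can a priori alter projections onto residues far from $A_0$, and one must carefully exploit that only the eventual value of each coordinate enters the definition of convergence, together with the good behaviour of convex hulls under retraction, to ensure that $(R'_n)$ and $(R_n)$ have the same limit.
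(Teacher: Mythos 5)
Your construction fails: retracting onto an \emph{arbitrary} reference apartment $A_0$ does not preserve the limit. The simplest counterexample is a tree $X$ (Example~\ref{exarbres}). Let $A_0$ be a line through $C_0$ with chambers (edges) $\dots, e_{-1}, e_0=C_0, e_1, e_2,\dots$, let $v$ be the panel (vertex) between $e_1$ and $e_2$, and suppose a third edge $f_1$ issues from $v$ into a subtree containing an end $\xi$ with $\xi \notin \bd A_0$. If $(C_n)$ converges to the corresponding $f\in\Cc(X)$, then $R'_n = \rho_{A_0,C_0}(C_n) = e_{d(C_0,C_n)}$, so $(R'_n)$ converges to the end $\eta^+$ of $A_0$, for which $\proj_v(\eta^+)=e_2$, whereas $f(v)=f_1$. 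The flaw is the step where you pass from the (correct) identity $\proj_\sigma(R'_n)=\phi_n(\proj_\sigma(R_n))$ to the claim that $\proj_\sigma(R'_n)$ stabilises \emph{at $f(\sigma)$}: the isomorphism $\phi_n$ fixes $\sigma$ but not $\St(\sigma)$ pointwise, so $\phi_n(f(\sigma))\neq f(\sigma)$ in general (above, $\phi_n(f_1)=e_2$). Moreover a fixed $\sigma\subset A_0$ need not lie in $A_0\cap A_n$ at all, so the identity is not even available for all coordinates. The deeper point is that $f$ simply need not belong to $\Cr(A_0)$ for a prechosen $A_0$; the proposition only asserts the existence of \emph{some} apartment that works.

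The apartment must therefore be constructed \emph{adapted to $f$}, which is why the paper defers the proof and first develops sectors. One sets $Q(x,f)=\bigcup_{k}\bigcap_{n\geq k}\Conv(x,R_n)$, an increasing union of convex sets each lying in an apartment, hence itself contained in some apartment $A$; Lemma~\ref{lem:retraction:3} shows that retraction onto an apartment \emph{containing such a sector} does preserve the limit, and the filtering property of sectors (Proposition~\ref{mmapp2}, resting on Lemma~\ref{suitapp}) is what produces a sequence inside $A$ converging to $f$ in $\Cr(X)$ and not merely in $\Cr(A)$ (Proposition~\ref{prop:phi:bis}). If you want to salvage your retraction idea, you must first build $Q(x,f)$ and an apartment containing it, and only then retract; the retraction onto a generic apartment cannot do the work.
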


This proposition is crucial in understanding the combinatorial compactifications, since it allows one to reduce
many problems to the thin case. The proof requires some preparation and is thus postponed to \S\ref{sec:sectors}

%%%%%%%%%%%%%%%%%%%%%%%%%%%%%%%%%%%%%%%%%%%%%%%%%%%%%%%%%
%%%%%%%%%%%%   report 1 %%%%%%%%%%%%%%%%%%%%%%%%%%%%%%%%%
%%%%%%%%%%%%%%%%%%%%%%%%%%%%%%%%%%%%%%%%%%%%%%%%%%%%%%%%%

\begin{example}\label{exarbres}
Trees without leaves are buildings of type $D_\infty$. Panels in these trees are vertices and a sequence of
chambers ({\it i.e.} edges) $(x_n)$ converges in the minimal bordification if the projection of $x_n$ on every vertex is eventually constant.
It is easy to check that $\Cc(X)$ is isomorphic to the usual bordification of the tree, that is, to its set of
ends.

It is also possible to view a homogeneous tree of valency $r\geq 1$ as the Coxeter complex associated to the group $W=\langle s_1,\dots,s_r\,|\,s_1^2,\dots,s_r^2\rangle$. The panels are then the middles of the edges, a chamber is a vertex with all the half-edges which are incident to it. From this viewpoint as well, the combinatorial bordification coincides with  the visual one.
\end{example}

\begin{example} (This example may be compared to \cite[6.3.1]{GuR})\label{exA2}
Let us consider an apartment $A$ of type $\tilde A_2$. It is a Euclidean plane,
 tessellated by regular triangles. We know by Lemma~\ref{remarque} that we can characterize the points $\xi\in \Cc(A)$ by the sets of roots $\Phi(\xi)$ associated to them. We may distinguish several types of boundary points. Let us choose some root basis $\{a_1,a_2\}$ in the vectorial system of roots. Then there is a point $\xi\in \Cc(A)$ defined by $\Phi(\xi)=\{a_1+k,a_2+l|\,k,l\in\Z\}$. There are six such points, which correspond to a choice of positive roots, {\it i.e.} to a Weyl chamber in $A$. The sequences of (affine) chambers that converge to these points are the sequences that eventually stay in a given sector, but whose distances to each of the two walls in the boundary of this sector tend to infinity.

There is also another category of boundary points, which corresponds to sequence of chambers that stay in a given sector, but stay at bounded distance of one of the two walls defining this sector. With a choice of $a_1$ and $a_2$ as before, these are points associated to set of roots of the form $\{a_1+k,a_2+l|k,l\in\Z,k\leq k_0\}$. As $k_0$ varies, we get a `line' of such points, and there are six such lines.

When $X$ is a building of type $\tilde A_2$, by Proposition~\ref{prop:phi}, we can always write a point in the
boundary of $X$ as a point in the boundary of some apartment of $X$. Thus, the previous description applies to general points of the bordification.
\end{example}

\begin{example}
Let $W$ be a Fuchsian Coxeter group, that is, whose Coxeter complex is a tessellation of the hyperbolic plane. Assume the action of $W$ on the hyperbolic plane is cocompact. As in the previous example we shall content ourselves with a description of  the combinatorial compactification of some apartment $A$. In order to do so, we shall use the visual boundary $\partial_\infty\Sigma\simeq S^1$. If there is a point $\xi$ of this boundary towards which no wall is pointing, then we can associate to it a point of the combinatorial compactification, just by taking the roots that contain a sequence of points converging to $\xi$.  If we have a point of the boundary towards which $n$ walls are pointing, we associate to it $n+1$ points in $\Cc(A)$, whose positions are defined in relations to the roots which have these $n$ walls as a boundary.

Moreover, let us remark that the set of limit points of walls is dense in the boundary of the hyperbolic plane. To prove that, it is enough to check that the action of $W$ on $S^1$ is minimal (all its orbits are dense). Using \cite[Corollary 26]{GdlH}, the action of $W$ on its limit set $L(W)$ is minimal, and by \cite[Theorem 4.5.2]{Ka}, the limit set $L(W)$ is in fact $S^1$.

Therefore, if $\xi$ and $\xi'$ are two regular points ({\it i.e.} towards which no wall is pointing), then $\xi$ and $\xi'$ are separated by some wall. In particular, we see that the construction we have just made always yields different points. Therefore, the compactification $\Cc(A)$ is a refinement of the usual boundary.
\end{example}

\begin{example}\label{exproduit}
If $X=X_1\times X_2$ is a product of buildings, then a chamber of $X$ is just the product of a chamber in $X_1$
and a chamber in $X_2$, and the projection on another chamber is just the product of the projections in $X_1$
and $X_2$. Therefore, the combinatorial compactification $\Cr(X)$ is the product $\Cr(X_1)\times \Cr(X_2)$.
\end{example}
\vspace{0.5 cm}

\subsection{Projecting from infinity}

In this section, we show that any function  $f \in \Cc(X)$ admits a projection on every spherical residue
of $X$. This allows us to define the embedding of $\Cc(X)$ into $\Cr(X)$ alluded to above.

Let thus $\xi \in \Cc(X)$ and $R$ be any residue. Recall
that $R$ is a building \cite[Theorem~3.5]{Ro}. We define the \textbf{projection} of $\xi$ to $R$, denoted  $\proj_R(\xi)$, to be the restriction of $\xi$ to $\cl(R)$.  In the special case when $R$ is a panel, the set $\cl(R)$ is a singleton and the function $\proj_R(\xi)$ may therefore be identified with a chamber which coincides with $\xi(R)$.

Similarly, given  $\xi \in \Cr(X)$ we define $\proj_R(\xi)$ as the restriction of $\xi$ to $\Res(R)$.

The next statement ensures that the definition of $\proj_R$ is meaningful.

\begin{lemma}\label{proj}
Let $(C_n)$ be a sequence of chambers converging to $\xi\in \Cc(X)$
and let $R$ be a residue in $X$. The sequence of projections
$(\proj_R(C_n))$ converges to an element $\proj_R(\xi)\in
\prod_{\sigma \in \cl(R)}\St(\sigma)$. In particular $\proj_R(\xi)$
is an element of $\Cc(R)$.

Similarly, any sequence  $(R_n)$  converging to some $\eta \in \Cr(X)$ yields a sequence $(\proj_R(R_n))$ which converges to some element of $\Cr(R)$ which is denoted by $\proj_R(\eta)$.
\end{lemma}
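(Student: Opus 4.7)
The plan is to reduce the claim to the discreteness statement of Proposition~\ref{disccomb}: to show that $(\proj_R(C_n))$ converges in $\Cc(R)$ it suffices to verify that for every panel $\sigma \in \cl(R)$ the sequence $\proj_\sigma^R(\proj_R(C_n))$ is eventually constant inside the discrete set of chambers of $R$ containing $\sigma$, where $\proj_\sigma^R$ denotes the combinatorial projection computed \emph{inside the sub-building} $R$. The common value will then define the $\sigma$-coordinate of a function on $\cl(R)$, which I will identify with the restriction $\xi|_{\cl(R)}$. The analogous reduction applies to $(\proj_R(R_n))$ in $\Cr(R)$.

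The heart of the argument is the compatibility formula
\[
\proj_\sigma^R \circ \proj_R \;=\; \proj_\sigma \qquad \text{for every } \sigma \in \cl(R).
\]
To see this, I would first observe that since $\sigma$ is a rank-one sub-residue of $R$, every chamber of $\St(\sigma)$ already lies in $R$, so that the star of $\sigma$ computed in $R$ coincides with its star in $X$. The gate property of the (convex) residue $R$ then gives, for any chamber $C \in \ch(X)$ and any $D \in R$,
\[
d(C,D)\;=\;d(C,\proj_R(C))+d(\proj_R(C),D).
\]
Minimising both sides over $D \in \St(\sigma) \subseteq R$ yields $\proj_\sigma(C) = \proj_\sigma^R(\proj_R(C))$, which is exactly the desired identity.

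Granted this, the first half of the lemma is immediate: since $\projc(C_n) \to \xi$, the sequence $\proj_\sigma(C_n)$ is eventually constant equal to $\xi(\sigma)$ for every panel $\sigma$, so in particular $\proj_\sigma^R(\proj_R(C_n)) = \proj_\sigma(C_n)$ is eventually constant equal to $\xi(\sigma)$ for every $\sigma \in \cl(R)$. Therefore $\projc^R(\proj_R(C_n)) \to \xi|_{\cl(R)}$ in $\Cc(R)$, which simultaneously proves that $\xi|_{\cl(R)} = \proj_R(\xi)$ really lies in $\Cc(R)$ (and not merely in the larger product $\prod_{\sigma \in \cl(R)} \St(\sigma)$). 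For the second half of the statement, concerning $\Cr$, exactly the same strategy applies once one upgrades the compatibility formula to residues, namely $\proj_\tau^R(\proj_R(T)) = \proj_\tau(T)$ for every $\tau \in \Res(R)$ and every spherical residue $T$. This can be deduced from the chamber case either via the formula $\proj_\tau(T) = \bigcap \{ \proj_\tau(C) : C \in \ch(X) \cap \St(T) \}$ recalled in Section~\ref{sec:root-distance}, together with the fact that $\{\proj_R(C) : C \in \ch(X) \cap \St(T)\}$ is exactly the set of chambers of $R$ containing $\proj_R(T)$, or else directly from the convex-hull characterisation of Lemma~\ref{lem:proj-conv}. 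This residue-level compatibility is the main technical obstacle: it is geometrically obvious but requires some care to make rigorous; once it is in hand, the conclusion that $(\proj_R(R_n))$ converges to $\proj_R(\eta) := \eta|_{\Res(R)}$ in $\Cr(R)$ follows exactly as in the chamber case.
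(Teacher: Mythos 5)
Your proof is essentially the paper's own argument, made slightly more explicit. The paper's proof of this lemma also rests on the single identity $\proj_\sigma(C_n) = \proj_\sigma(\proj_R(C_n))$ for each panel $\sigma$ of the sub-building $R$, together with the observation that convergence in $\Cc(X)$ means eventual stabilisation of each $\sigma$-coordinate; you have merely added the intermediate notation $\proj_\sigma^R$ and spelled out (via the gate property of convex residues) why $\proj_\sigma^R$ agrees with $\proj_\sigma$ on $\ch(R)$ and why the transitivity $\proj_\sigma \circ \proj_R = \proj_\sigma$ holds, facts the paper uses tacitly as standard. Two minor remarks: the appeal to Proposition~\ref{disccomb} is not really needed — the reduction to eventual constancy of each coordinate is just the definition of convergence in a product of discrete spaces — and your proposed justification of the residue-level compatibility $\proj_\tau \circ \proj_R = \proj_\tau$ (via the claim that $\{\proj_R(C) : C \in \ch(T)\}$ is exactly the chamber set of $\proj_R(T)$) is a bit hand-wavy as stated, but it is indeed a standard fact about nested residues and the route through Lemma~\ref{lem:proj-conv} that you also mention closes it cleanly. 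The paper itself dismisses the $\Cr$ case with ``the maximal one is similar,'' so you have actually been more thorough there.
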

\begin{proof}
We focus on the minimal compactification; the maximal one is similar.

It is enough to prove the very first point. By definition of the convergence in $\Cc(X)$, for every panel
$\sigma \supset R$, there exists some integer $N$ depending on $\sigma$ such that for $n>N$,
$\proj_\sigma(C_n)=\xi(\sigma)$. Moreover we have
$$\proj_R(\proj_\sigma(C_n))=\proj_R(\xi(\sigma))=\xi(\sigma).$$

Now $\proj_\sigma(C_n)$ coincides with $\proj_\sigma(\proj_R(C_n))$. Hence, for $n>N$, we have
$$\proj_\sigma(\proj_R(C_n))=\xi(\sigma),$$
which is equivalent to saying that $(\proj_R(C_n))$ converges to $\proj_R(\xi)$.
\end{proof}

In Lemma~\ref{proj}, if the residue $R$ is spherical then for any
$\xi \in \Cc(X)$, the projection $\proj_{R}(\xi)$ may be identified
with a chamber of $R$. Similarly, for any  $\eta \in \Cr(X)$, the
projection $\proj_{R}(\eta)$ may be identified with a residue in
$\St(R)$.

\medskip
Let now $C$ be a chamber and $\xi$ be a point in the boundary of $\Cc(X)$. As $\xi$ is not equal to $C$, there
exists some panel in the boundary of $C$ on which the projection of $\xi$ is different from the projection of
$C$. Let $I$ be the set of all such panels, and consider the residue $R$ of type $I$ containing $C$.

\begin{lemma}\label{spher}
The residue $R$ is spherical and $\proj_R(\xi)$ is a chamber opposite $C$ in $R$.
\end{lemma}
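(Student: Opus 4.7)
The plan is to reduce both assertions to a classical statement about descent sets in Coxeter groups. Fix a sequence of chambers $(C_n)$ converging to $\xi$ in $\Cc(X)$. For each $n$ choose an apartment $A_n$ containing $C$ and $C_n$, and identify $A_n$ with the Coxeter complex of the type system $(W,S)$ so that $C$ is the fundamental chamber; write $C_n=w_nC$ for a unique $w_n\in W$. Since $X$ has finite rank, the set $S$, and hence $I\subseteq S$, is finite.

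The first step is to rewrite the defining condition of $I$ as a statement on left descent sets. For each $s\in S$, the panel $\tau_s$ of $C$ of type $s$ lies in $A_n$; since $A_n$ also contains $C_n$, the combinatorial projection $\proj_{\tau_s}(C_n)$ lies in $A_n\cap\St(\tau_s)=\{C,sC\}$. A standard computation in the Coxeter complex gives $\proj_{\tau_s}(C_n)=C$ if and only if $\ell(sw_n)>\ell(w_n)$. Consequently, $s\in I$ if and only if $s$ belongs to the left descent set $D_L(w_n):=\{t\in S:\ell(tw_n)<\ell(w_n)\}$ for all sufficiently large $n$. Since $S$ is finite, after passing to a tail of the sequence we may assume that $D_L(w_n)=I$ for every $n$.

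Next, I invoke the parabolic coset decomposition $w_n=w_{n,I}\cdot w_n^I$, where $w_{n,I}\in W_I:=\langle I\rangle$, $w_n^I$ is the unique element of minimal length in the coset $W_Iw_n$, and $\ell(w_n)=\ell(w_{n,I})+\ell(w_n^I)$. Since $w_n^I$ has no descent in $I$, for any $s\in I$ the decomposition $sw_n=(sw_{n,I})\cdot w_n^I$ is still length-additive, yielding $\ell(sw_n)-\ell(w_n)=\ell(sw_{n,I})-\ell(w_{n,I})$. The condition $D_L(w_n)\cap I=I$ therefore translates into the statement that $w_{n,I}$ has full descent set $I$ viewed as an element of $W_I$. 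Iterating the descent property shows that $\ell(uw_{n,I})=\ell(w_{n,I})-\ell(u)$ for every $u\in W_I$, so the length function is bounded on $W_I$; hence $W_I$ is finite and $w_{n,I}$ is its longest element $w_0^I$. This establishes the first assertion, namely that $R$ is spherical.

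For the second assertion, observe that the chambers of $R$ contained in $A_n$ are exactly $\{uC:u\in W_I\}$, and for every $u\in W_I$ we have $d(uC,C_n)=\ell(u^{-1}w_n)=\ell(u^{-1}w_{n,I})+\ell(w_n^I)$, which is minimised precisely when $u=w_{n,I}=w_0^I$. Hence $\proj_R(C_n)=w_0^IC$, the chamber of the apartment $R\cap A_n$ opposite $C$, and thus a chamber of $R$ opposite $C$. By Lemma~\ref{proj}, $\proj_R(\xi)=\lim\proj_R(C_n)$; since the spherical residue $R$ is a finite building, this sequence is eventually constant, so its limit is a chamber of $R$ opposite $C$. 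The main obstacle is the clean identification of the panel-projection with the left descent set condition on $w_n$, together with the Coxeter-theoretic observation that an element with full $I$-descent forces $W_I$ to be finite; this last point must be handled without invoking in advance the existence of a longest element of $W_I$.
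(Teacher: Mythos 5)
Your argument is correct and is, at bottom, the same as the paper's: both proofs reduce the statement to the classical fact that if the left descent set of $w_n$ contains $I$, then $W_I$ is finite and the $W_I$-component of $w_n$ is the longest element of $W_I$ (equivalently: a chamber whose projection onto every panel of $C$ of type in $I$ avoids $C$ forces the $I$-residue $R$ to be spherical, with gate opposite $C$). The paper gets there in two lines, transferring the hypothesis to the chamber $\proj_R(C_n)$ of the building $R$ via the gate property $\proj_\tau(C_n)=\proj_\tau(\proj_R(C_n))$ for panels $\tau\subset R$ of $C$, and then citing \cite[IV.6, Lemma~3]{Bro}; you instead carry out the reduction explicitly through the parabolic decomposition $w_n=w_{n,I}w_n^I$. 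Your identity $\ell(sw_n)-\ell(w_n)=\ell(sw_{n,I})-\ell(w_{n,I})$ is exactly the Coxeter-group avatar of that gate property, and your identification $\proj_R(C_n)=w_{n,I}C$ is right. The one step you should not wave at is the claim that ``iterating the descent property'' yields $\ell(uw_{n,I})=\ell(w_{n,I})-\ell(u)$ for all $u\in W_I$: the naive induction on $\ell(u)$, peeling a generator off $u$, does not close, because $sw_{n,I}$ need no longer have full descent set in $W_I$. The standard clean argument is via roots: $\ell(sw_{n,I})<\ell(w_{n,I})$ for all $s\in I$ means $w_{n,I}^{-1}$ sends every simple root of $W_I$ to a negative root, hence every positive root of $W_I$ (a nonnegative combination of simple roots whose image, being a root that is a nonpositive combination of simple roots, must be negative) to a negative root; thus the number of positive roots of $W_I$ equals $\ell(w_{n,I})<\infty$, so $W_I$ is finite and $w_{n,I}$ is its longest element. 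Alternatively one just cites the lemma of Brown that the paper uses, which is precisely this statement. A last minor point: your appeal to ``$R$ is a finite building'' to conclude that $\proj_R(C_n)$ is eventually constant is only valid when $X$ is locally finite, whereas this lemma is stated for arbitrary buildings; in general one should instead invoke the remark preceding the lemma, namely that for a spherical residue $R$ the limit $\proj_R(\xi)\in\Cc(R)$ is itself a chamber of $R$, which is then opposite $C$ since it has the same projections on the panels of $C$ as the opposite chambers $\proj_R(C_n)$.
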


\begin{proof}
Let $(C_n)$ be a sequence converging to $\xi$. By Lemma \ref{proj}, $(\proj_R(C_n))$ converges to $\proj_R(\xi)$. Therefore, the projection of $\proj_R(C_n)$ on the panels adjacent to $C_n$ is eventually the same as the projection of  $\proj_R(\xi)$, that is, of $\xi$. In other words, the projection of $C_n$ on every panel of $R$ in the boundary of $C$ is always different from $C$. By \cite[IV.6, Lemma 3]{Bro}, this implies that $R$ is spherical and that $\proj_R(C_n)$ is opposite to $C$. As $\proj_R(C_n)$ converges to $\proj_R(\xi)$, this implies that $\proj_R(\xi)$ is opposite to $C$ in $R$.
\end{proof}

\begin{definition}\label{defspher}
The residue $R$ defined as above is called the \textbf{residual projection} of $\xi$ on~$C$.
\end{definition}

\begin{proposition}\label{prop:CcToCr}
There is a $G$-equivariant continuous injective map $\Cc(X) \to \Cr(X)$. Its image coincides with the closure of $\ch(X)$ in $\Cr(X)$.
\end{proposition}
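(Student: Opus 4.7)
The natural candidate map $\iota \colon \Cc(X) \to \Cr(X)$ is $\xi \mapsto \bigl( R \mapsto \proj_R(\xi) \bigr)$, where the right-hand side is the projection introduced just before Lemma~\ref{proj}. By Lemma~\ref{proj} together with the observation recorded immediately after it, for any spherical residue $R$ the projection $\proj_R(\xi)$ is identified with a chamber of $R$, hence lies in $\St(R)$. More concretely, if $(C_n) \subset \ch(X)$ satisfies $\projc(C_n) \to \xi$ in $\Cc(X)$, then Lemma~\ref{proj} gives $\proj_R(C_n) \to \proj_R(\xi)$ in $\Cc(R)$; since the image of $\projc$ in $\Cc(R)$ is discrete (Proposition~\ref{disccomb}) and the limit is a chamber of $R$, the sequence $\proj_R(C_n)$ is eventually constant and equal to $\iota(\xi)(R)$. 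Interleaving two such sequences shows $\iota(\xi)$ is independent of the choice of $(C_n)$, and the pointwise eventual equality for every spherical residue $R$ is exactly the statement that $\projr(C_n) \to \iota(\xi)$ in the product topology, so $\iota(\xi)$ indeed belongs to $\overline{\projr(\ch(X))} \subseteq \Cr(X)$.

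The equivariance of $\iota$ follows at once from the equivariance of the combinatorial projections. For injectivity, if $\iota(\xi) = \iota(\xi')$, then evaluating on any panel $\sigma$ yields $\xi(\sigma) = \iota(\xi)(\sigma) = \iota(\xi')(\sigma) = \xi'(\sigma)$, whence $\xi = \xi'$. Once continuity is established, identifying the image is a formality: $\iota$ agrees on $\ch(X)$ with the canonical inclusion into $\Cr(X)$ (via the identifications of Proposition~\ref{disccomb}), so continuity gives $\iota(\Cc(X)) \subseteq \overline{\projr(\ch(X))}$; conversely, any $\eta$ in this closure is the limit of some $\projr(C_n)$, and passing to panels shows $\projc(C_n)$ converges in $\Cc(X)$ to a point $\xi$ with $\iota(\xi) = \eta$.

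The crux of the argument is therefore continuity, which I reduce to showing that for each spherical residue $R$ the component map $\xi \mapsto \iota(\xi)(R)$ into the discrete set $\St(R)$ is locally constant. The key ingredient is a local characterisation of the projection: if $R$ has type $J$ (finite, since $R$ is spherical) and $S$ is any chamber of $R$, then for any chamber $C$ of $X$ one has $\proj_R(C) = S$ if and only if $\proj_{\sigma_s}(C) = S$ for every $s \in J$, where $\sigma_s$ denotes the $s$-panel of $S$. This is the standard gate/convexity property of residues: the identity $d(C,T) = d(C,S) + d(S,T)$ characterising $S = \proj_R(C)$ for all $T \in \ch(R)$ reduces, by convexity of $R$, to minimality at the $|J|$ $s$-neighbours of $S$. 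Passing to eventually-constant limits, $\iota(\xi)(R) = S$ becomes the finite condition $\xi(\sigma_s) = S$ for all $s \in J$, which cuts out a basic open neighbourhood in $\Cc(X)$. This gate property is the only substantive ingredient; every other step in the proposition is a formal consequence of the definitions.
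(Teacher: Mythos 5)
Your proposal is correct and follows exactly the route the paper intends: the map is defined by $\xi \mapsto (R \mapsto \proj_R(\xi))$ using the projections of Lemma~\ref{proj}, and you supply the verifications (well-definedness, injectivity via panels, continuity via the gate characterisation $\proj_R(C)=S \Leftrightarrow \proj_{\sigma_s}(C)=S$ for the finitely many panels $\sigma_s$ of $S$ in $R$) that the paper explicitly leaves to the reader. The details check out, including the identification of the image without appealing to compactness, which matters since $X$ need not be locally finite.
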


\begin{proof}
As pointed out before, the set $\Res(X)$ may be identified with a subset of $\Cr(X)$ via the map $\projr$. In particular we may view $\ch(X)$ as a subset of $\Cr(X)$. Projections to residues allows one to extend this inclusion to a well defined map $\Cc(X) \to \Cr(X)$. The fact that it is injective and continuous is straightforward to check; the details are left to the reader.
\end{proof}

In view of this Proposition, we may identify $\Cc(X)$ to a closed subset of $\Cr(X)$. The fact that $\Cc(X) \cap \Res(X)$ coincides with $\ch(X)$ motivates the following definition.

\begin{definition}\label{def:chamber}
A point of $\Cr(X)$ which belongs to $\Cc(X)$ is called a \textbf{chamber}. If it does not belong to $\ch(X)$, we say that it is a \textbf{chamber at infinity}.
\end{definition}

\subsection{Extending the notion of sectors to arbitrary buildings}
The notion of {sectors} is crucial in analysing the structure of Euclidean buildings. In this section we propose
a generalisation of this notion to arbitrary buildings. This will turn out to be a crucial tool for the study of
the combinatorial bordifications.

Let $x \in \Res(X)$ be a spherical residue and $(R_{n})$ be a sequence of spherical residues converging to some
$\xi \in \Cr(X)$. In order to simplify the notation, we shall denote the sequence $(R_n)$ by $\underline R$. For
any integer $k \geq 0$ we set
$$Q_k=\bigcap_{n\geq k}{\mathrm {Conv}}(x,R_n)$$
and
$$Q(x,\underline R)=\bigcup_{k\geq 0}Q_k.$$ %=\bigcup_{k\geq 0}\bigcap_{n\geq k}\Conv(x,x_n).$$

Since $Q_{k}$ is contained in an apartment and since $Q_{k} \subset Q_{k+1}$ for all $k$, it follows from
standard arguments that $Q(x, \underline R)$ is contained in some apartment of $X$ (compare \cite[\S3.7.4]{Tit}
or \cite[Theorem 3.6]{Ro}).

\begin{remark}\label{rem:sectors}
Retain the same notation as before and  let $y$ be a spherical residue. If $y\subset  Q(x,\underline R)$, then we have $Q(y,\underline R)\subset Q(x,\underline R)$.
\end{remark}

\begin{proposition}\label{Qxi}
Let $(R_n)=\underline R$ be a sequence of spherical residues converging to $\xi\in \Cr(X)$ and let $x \in \Res(X)$.
\begin{enumerate}
\item[(i)]
The set $Q(x,\underline R)$ only depends on $x$ and $\xi$, and not on the sequence $\underline R$.

\item[(ii)] $Q(x,\underline R)$ may be characterised as the smallest subcomplex $P$ of $X$ containing $x$ and such that if $R$ is a spherical residue in $P$, then for every $\sigma \in \St(R)$, the projection $\proj_{\sigma}(\xi)$ is again in $P$.

%\item[(iii)] If $A$ is an apartment containing $Q(x,\underline R)$, then $Q(x,\underline R)$ coincides with the intersection of roots in $\Phi_A(x)\cap\Phi_{A,x}(\xi)$.
\end{enumerate}
\end{proposition}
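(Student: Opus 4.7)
Since the characterisation in (ii) refers only to $x$ and $\xi$ and makes no mention of the sequence $\underline R$, part (i) will follow immediately from (ii). I therefore concentrate on (ii), which splits into two inclusions.

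To see that $Q(x, \underline R)$ itself contains $x$ and satisfies the stability condition, note first that $x \in Q_0 \subseteq Q(x, \underline R)$, and that $Q(x, \underline R)$ is a subcomplex because each $\Conv(x, R_n)$ is convex hence closed by Proposition~\ref{prop:RootDistance}, a property preserved by intersections and by monotone unions. For stability under the operation, consider $R \in Q(x, \underline R)$ with $R \in \Conv(x, R_n)$ for all $n \geq k$, and $\sigma \in \St(R)$. Closedness of $\Conv(x, R_n)$ yields $\sigma \in \Conv(x, R_n)$, whence $\Conv(\sigma, R_n) \subseteq \Conv(x, R_n)$. By Lemma~\ref{lem:proj-conv} the projection $\proj_\sigma(R_n)$ lies in $\Conv(x, R_n)$, and by Lemma~\ref{proj} this sequence converges to $\proj_\sigma(\xi)$; discreteness of $\St(\sigma)$ then forces the sequence to be eventually constant, so $\proj_\sigma(\xi) \in \Conv(x, R_n) \subseteq Q(x, \underline R)$ for all sufficiently large $n$.

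For the reverse inclusion, let $P$ be any subcomplex containing $x$ and closed under the operation; I will prove $Q(x, \underline R) \subseteq P$ by induction on the root-distance $d(x, T)$ for $T \in Q(x, \underline R)$. Appealing to Proposition~\ref{prop:phi} (proved later in the paper), after extracting a subsequence I may assume $x$ and all $R_n$ lie in a common apartment $A$, so that $T \in A$. The base case $T = x$ is immediate. When $T$ is a chamber, I consider a minimal gallery $x = C_0, C_1, \ldots, C_m = T$; each $C_i$ lies in $\Conv(x, T) \subseteq \Conv(x, R_n)$ and hence in $Q(x, \underline R)$. For each panel $\pi_i$ between $C_i$ and $C_{i+1}$, the bounding wall separates $x$ from $T$, and since $T \in \Conv(x, R_n)$ for large $n$, Lemma~\ref{remarque} forces $(R_n)$ to lie eventually on the $T$-side of this wall, yielding $\proj_{\pi_i}(R_n) = C_{i+1}$ and therefore $\proj_{\pi_i}(\xi) = C_{i+1}$. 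Given $C_i \in P$ by induction, the subcomplex property places $\pi_i \in P$ and the defining operation produces $C_{i+1} = \proj_{\pi_i}(\xi) \in P$. For a non-chamber $T \in Q(x, \underline R)$, I use the closedness of $Q(x, \underline R)$ to pick a chamber $C$ in the chamber set of $T$ lying in $Q(x, \underline R)$, deduce $C \in P$ from the chamber case, and conclude $T \in P$ by the subcomplex property applied to the face $T$ of $C$.

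The most delicate step I foresee is the identity $\proj_{\pi_i}(\xi) = C_{i+1}$ along the gallery, in particular in configurations where $T$ does not itself lie on any minimal gallery from $x$ to $R_n$. The resolution will rest on the observation that membership of $T$ in $Q(x, \underline R)$ forces, via Lemma~\ref{remarque} applied to $\Conv(x, R_n) \ni T$, the sequence $(R_n)$ to lie eventually on the same side as $T$ of every wall separating $T$ from $x$; this is precisely what is needed to identify the projection of $R_n$ on $\pi_i$ with the next chamber $C_{i+1}$ of the gallery.
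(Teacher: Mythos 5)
The reverse inclusion is where your argument breaks down. You reduce to the situation where $x$ and all the $R_n$ lie in a common apartment $A$ by invoking Proposition~\ref{prop:phi}, and this is doubly problematic. First, it is circular: Proposition~\ref{prop:phi} is proved via the sector machinery (Proposition~\ref{prop:phi:bis}, Proposition~\ref{mmapp2}, Lemma~\ref{suitapp}, Lemma~\ref{lem:retraction:1}), and Lemma~\ref{lem:retraction:1} explicitly cites Proposition~\ref{Qxi}. Second, even granting it, Proposition~\ref{prop:phi} only says that \emph{some} sequence contained in \emph{some} apartment converges to $\xi$; it does not say that the given sequence $(R_n)$ has a subsequence lying in one apartment together with $x$ (false in a thick building), and replacing $(R_n)$ by another sequence with the same limit presupposes part~(i), which is what you are proving. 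The only fact available at this stage is that $Q(x,\underline R)$ itself lies in an apartment (noted just before the Proposition); the $R_n$ do not. As a result your key identity $\proj_{\pi_i}(R_n)=C_{i+1}$ cannot be extracted from Lemma~\ref{remarque}, which concerns sequences inside a single apartment; it would have to be proved directly, for each fixed $n$, inside an apartment containing $\Conv(x,R_n)$. Your gallery induction also tacitly assumes that $x$ is a chamber: for a general $x\in\Res(X)$ there is no minimal gallery starting at $x$, and this case is never addressed.

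Two further points. In the forward direction, the claim that closedness of $\Conv(x,R_n)$ yields $\sigma\in\Conv(x,R_n)$ for $\sigma\in\St(R)$ is backwards: closedness is closure under passing to \emph{faces} ($R_1\subset R_2\in\mathcal R\Rightarrow R_1\in\mathcal R$), whereas $\St(R)$ consists of residues having $R$ as a face. The property that can actually be verified for $Q$ (and is the one the paper works with and uses later) is the version in which $\sigma$ itself ranges over $P$: if $\sigma\in P$ then $\proj_\sigma(\xi)\in P$; with that reading, your computation $\proj_\sigma(R_n)\in\Conv(\sigma,R_n)\subset\Conv(x,R_n)$ is correct and coincides with the paper's. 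For the reverse inclusion the paper avoids galleries and apartments altogether: it inducts on the root-distance $d(x,R)$ using Lemma~\ref{lem:interval}, treating $R\subset x$ by closedness, $x\subset R$ via $R\subset\proj_x(R_n)=\proj_x(\xi)$ and Lemma~\ref{lem:proj-conv}, and otherwise choosing $x'\in{}]x,R[$, showing $R\in Q(x',\underline R)$, and restarting the induction from the base point $x'$. An argument of this type, or a correct local identification of the panel projections as indicated above, is needed to repair your proof.
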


\begin{proof}
Clearly (i) is a consequence of (ii).

Set $Q := Q(x, \underline R)$  and define $Q'$ to be the \textbf{$\xi$-convex hull} of $x$. By definition, this
means that $Q'$ is the minimal set  of spherical simplices satisfying the following three conditions:
\begin{itemize}
\item $x\in Q'$.

\item $Q'$ is closed.

\item For any spherical residue $\sigma \subset Q'$  we have $\proj_\sigma(\xi)\subset Q'$.
\end{itemize}
We have to show that $Q =Q'$. To this end, let $\mathcal V$ denote the collection of all subsets of $\Res(X)$ satisfying the above three conditions. Thus $Q' = \bigcap \mathcal V$.

By definition, for each $k \geq 0$ the subcomplex $Q_{k}$ is convex, hence closed, and contains $x$. Therefore
the same holds true for $Q$. We claim that $Q \in \mathcal V$. Indeed, for any $\sigma \in \Res(X)$ the
projection  $\proj_{\sigma}(\xi)$ coincides with $\proj_{\sigma}(R_{n})$ for $n$ large enough (see
Lemma~\ref{proj}). Therefore, for any $\sigma \in Q$ there exists a sufficiently large $k$ such that
$\proj_{\sigma}(\xi) = \proj_{\sigma}(R_{n}) \subset \Conv(x, R_{n})$ for all $n >k$. Thus  $\proj_{\sigma}(\xi)
\subset Q_{k} \subset Q$, which confirms that $Q \in \mathcal V$. In particular we deduce  that $Q' \subset Q$.

\medskip
Let now $R $ be a spherical residue in $Q$. We shall show by induction on the root-distance of $R$ to $x$ that
$R \subset Q'$.

Assume first that $x \supset R$. Then $R \in Q'$ since $Q'$ is closed and contains $x$. Assume next that $x
\subset R$. As $R \in Q$, we have $R \in \Conv(x, R_n)$ for any $n$ large enough. Since $\proj_x(R_n)$ is the
largest residue contained in $\Conv(x, R_n)$ and containing $x$ (see Lemma~\ref{lem:proj-conv}), we have $R
\subset \proj_x(R_n)$. It follows that $R \in Q'$ since $Q'$ is closed and since for large $n$ we have
$\proj_x(R_n) = \proj_x(\xi) \in Q'$.

In view of Lemma~\ref{lem:interval}, we may now assume that the interval $[x, R]$ is non-empty and contains some
spherical residue $x'$. By induction $x' \in Q'$. Let $n$ be large enough so that $R \in \Conv(x, R_n)$. We have
$x' \in [x, R] \subset \Conv(x, R)$ by Proposition~\ref{prop:RootDistance}(iii). Thus, in an apartment
containing $\Conv(x, R_n)$, any root containing $x$ and $R$ also contains $x'$. One deduces that $R \in
\Conv(x', R_n)$ for all large $n$. In particular $R \in Q(x', \underline R)$. By induction, we deduce that $R$
belongs to the $\xi$-convex hull of $x'$, which we denote by $Q'(x')$. Since $x' \in Q'$, we have $Q'(x')
\subset Q'$ whence $R \in Q'$ as desired.
\end{proof}

Proposition~\ref{Qxi} shows that $Q(x, \underline R)$ depends only on $\xi = \lim \underline R$. It therefore makes sense to write  $Q(x,\xi)$ instead of $Q(x,\underline R)$.

\begin{definition}
The set $Q(x,\xi)$ is called the \textbf{combinatorial sector}, or simply the \textbf{sector}, pointing towards $\xi$ and based at $x$.
\end{definition}

\begin{remark}\label{rqQ}
In the affine case, sectors in the classical sense are also combinatorial sectors in the sense of the preceding
definition (see example \ref{exqA2}). However, the converse is not true in general.
\end{remark}

The following shows that the sector $Q(x, \xi) = Q(x, (R_n))$ should be thought of as the pointwise limit of
$\Conv(x, R_n)$ as $n$ tends to infinity:

\begin{corollary}\label{cor:sector:local}
Let $x \in \Res(X)$ and $(R_n)$ be a sequence of spherical residues converging to some $\xi \in \Cr(X)$. For any
finite subset $F \subset \Res(X)$ there is some $N\geq 0$ such that for any $n >N$, the respective intersections
of the sets $Q(x, \xi)$ and $\Conv(x, R_n)$ with $F$ coincide.
\end{corollary}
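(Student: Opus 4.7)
The plan is to prove the two inclusions $Q(x,\xi) \cap F \subset \Conv(x,R_n) \cap F$ and $\Conv(x,R_n) \cap F \subset Q(x,\xi) \cap F$ separately, for $n$ large. The key ingredient, beyond the very definition $Q(x,\underline R) = \bigcup_{k} \bigcap_{n\geq k} \Conv(x,R_n)$, is the invariance statement of Proposition~\ref{Qxi}(i): the sector $Q(x,\xi)$ does not depend on the particular sequence converging to $\xi$.

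The first inclusion is essentially tautological. For each $\sigma \in Q(x,\xi) \cap F$, by definition of $Q(x,\xi) = \bigcup_k Q_k$ there is an integer $k_\sigma$ with $\sigma \in Q_{k_\sigma} = \bigcap_{n\geq k_\sigma}\Conv(x,R_n)$. Since $F$ is finite, $N_1 := \max\{k_\sigma \mid \sigma \in Q(x,\xi) \cap F\}$ exists, and for every $n \geq N_1$ each such $\sigma$ lies in $\Conv(x,R_n)$.

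The opposite inclusion is the substantive step. I would show that any $\sigma \in F$ not lying in $Q(x,\xi)$ fails to lie in $\Conv(x,R_n)$ for all but finitely many $n$. Suppose the contrary: after extracting a subsequence, $\sigma \in \Conv(x,R_{n_k})$ for every $k$. The subsequence $(R_{n_k})$ still converges to $\xi$, so Proposition~\ref{Qxi}(i) yields $Q(x,(R_{n_k})) = Q(x,\xi)$. But then
\[
\sigma \;\in\; \bigcap_{k\geq 0}\Conv(x,R_{n_k}) \;\subset\; Q(x,(R_{n_k})) \;=\; Q(x,\xi),
\]
contradicting $\sigma \notin Q(x,\xi)$. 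Using finiteness of $F$ once more, this produces an integer $N_2$ beyond which no element of $F \setminus Q(x,\xi)$ lies in $\Conv(x,R_n)$.

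Setting $N := \max(N_1, N_2)$ then gives the required threshold. I do not expect any serious obstacle: essentially all the work has been done in Proposition~\ref{Qxi}(i), and the subsequence trick packages that invariance into the desired eventual-equality statement.
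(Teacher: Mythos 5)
Your proof is correct and follows essentially the same route as the paper's: the first inclusion is immediate from the definition of $Q(x,\xi)$ together with the finiteness of $F$, and the second is obtained by extracting a subsequence along which $\sigma$ stays in the convex hulls and invoking Proposition~\ref{Qxi}(i) to conclude $\sigma\in Q(x,\xi)$, a contradiction. No gaps.
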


\begin{proof}
It suffices to show that for each $y \in F$, either $y \subset \Conv(x, R_n)$ for all large $n$, or $y \not
\subset \Conv(x, R_n)$ for all large $n$.

If $y \subset Q(x, \xi)$, this follows at once from the definition of $Q(x, \xi)$.

Assume now that $y$ is not contained in $Q(x, \xi)$. We claim that there is some $N
>0$ such that $y \not \subset \Conv(x, R_n)$ for all $n >N$. Indeed, in the contrary case for each $n>0$ there
is some $\varphi(n)>n$ such that $y$ is contained in $\Conv(x, R_{\varphi(n)})$. Therefore we have $y \subset
\bigcap_{n>0} \Conv(x,R_{\varphi(n)})$. Since $(R_{\varphi(n)})$ converges to $\xi$ it follows from
Proposition~\ref{Qxi} that $Q(x, (R_{\varphi(n)})) = Q(x, \xi)$ and we deduce that $y \subset Q(x, \xi)$, which
is absurd.
\end{proof}

The following interpretation of the projection from infinity shows in particular that for all $x \in \Res(X)$ and
$\xi \in \Cr(X)$, the projection $\proj_x(\xi)$ is canonically determined by the sector $Q(x, \xi)$, viewed as a
set of spherical residues.

\begin{corollary}\label{cor:proj}
For all $x \in \Res(X)$ and $\xi \in \Cr(X)$, the projection $\proj_x(\xi)$ coincides with the unique maximal
residue containing $x$ and contained in $Q(x, \xi)$.
\end{corollary}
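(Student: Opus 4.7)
The plan is to combine Lemma~\ref{proj} (convergence of projections), Lemma~\ref{lem:proj-conv} (characterisation of $\proj_R(T)$ as the maximal residue containing $R$ inside $\Conv(R,T)$) and the very definition of $Q(x,\xi)$ as a nested union of intersections $Q_k = \bigcap_{n \geq k} \Conv(x,R_n)$. Pick any sequence $(R_n)$ of spherical residues converging to $\xi$; by Proposition~\ref{Qxi} the sector $Q(x,\xi)$ does not depend on the choice of such a sequence. Since $\St(x)$ is discrete, the convergence $\proj_x(R_n) \to \proj_x(\xi)$ provided by Lemma~\ref{proj} means that there exists $N$ with $\proj_x(R_n) = \proj_x(\xi)$ for all $n \geq N$.

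To see that $\proj_x(\xi)$ is contained in $Q(x,\xi)$, observe that for every $n \geq N$ Lemma~\ref{lem:proj-conv} yields $\proj_x(\xi) = \proj_x(R_n) \subset \Conv(x, R_n)$. Intersecting over $n \geq N$ gives $\proj_x(\xi) \subset Q_N \subset Q(x,\xi)$, and of course $\proj_x(\xi) \supset x$ since $\proj_x(\xi) \in \St(x)$.

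For maximality, let $\pi$ be any spherical residue with $x \subset \pi$ and $\pi \subset Q(x,\xi)$. By definition of the nested union, $\pi \subset Q_k$ for some $k \geq 0$, hence $\pi \subset \Conv(x, R_n)$ for all $n \geq k$. Applying Lemma~\ref{lem:proj-conv} once more, the fact that $\pi$ is a residue containing $x$ and contained in $\Conv(x, R_n)$ forces $\pi \subset \proj_x(R_n)$ (the \emph{unique maximal} such residue). Taking $n \geq \max\{k, N\}$ we conclude $\pi \subset \proj_x(R_n) = \proj_x(\xi)$. This simultaneously shows that $\proj_x(\xi)$ is the maximal residue with the stated property and that such a maximal residue is unique.

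No real obstacle is expected: the argument is essentially a bookkeeping exercise combining the already-established Lemmas~\ref{proj} and~\ref{lem:proj-conv} with the definition of $Q(x,\xi)$. The only minor point to be careful about is to use the \emph{discreteness} of each $\St(\sigma)$ to replace convergence of projections by eventual equality, so that one can pick a single index $N$ beyond which $\proj_x(R_n)$ is literally equal to $\proj_x(\xi)$ rather than merely close to it.
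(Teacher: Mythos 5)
Your proof is correct and follows essentially the same route as the paper's one-line argument, which simply cites Lemma~\ref{lem:proj-conv} together with Corollary~\ref{cor:sector:local}: where the paper appeals to Corollary~\ref{cor:sector:local} to reduce to a comparison of $Q(x,\xi)$ with $\Conv(x,R_n)$ on a finite set, you instead unfold the nested-union definition $Q(x,\xi)=\bigcup_k\bigcap_{n\geq k}\Conv(x,R_n)$ directly and invoke Lemma~\ref{proj} plus discreteness of $\St(x)$ to get eventual equality $\proj_x(R_n)=\proj_x(\xi)$, which is precisely the content used in both directions of the argument.
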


\begin{proof}
Follows from Lemma~\ref{lem:proj-conv} and Corollary~\ref{cor:sector:local}.
\end{proof}

\begin{example}\label{exqarbre}
Let $X$ be a tree. It has been seen in example \ref{exarbres} that $\Cc(X)$ coincides with the visual compactification
$X\cup\partial_\infty X$. If $\xi\in\partial_\infty X$, then $Q(x,\xi)$ is the half-line starting from $x$ and
pointing towards $\xi$.
%(If we see $X$ as a Coxeter complex for a free Coxeter group, then we must add some
%half-edges around the vertices of this half-line).

\end{example}
\begin{example}\label{exqA2}
Let $X$ be a building of type $\tilde A_2$. As explained in example \ref{exA2}, there are several type of boundary points. The sectors pointing towards different types of boundary points have different shapes. Furthermore, there are two possible orientations for chambers, which give also different shapes to the sectors.

Let $\xi_1$ be the point of the boundary which corresponds, in a given apartment $A$, to a set of roots $\Phi_A(\xi_1)=\{ a_1+k,a_2+l|k,l\in\Z\}$. Let $x$ be a chamber in $A$. To determine $Q(x,\xi_1)$, we have to consider the roots in the direction of $a_1$ and $a_2$ that contain $x$. We then have two possibilities for $Q(x,\xi_1)$, according to the orientation of $x$. If $x$ is oriented `towards $\xi_1$', then $Q(x,\xi_1)$ is the classical sector based at $x$ and pointing towards $\xi_1$. Otherwise, $Q(x,\xi_1)$ is a troncated sector. These two possibilities are described on Figure 1.

Now, let $\xi_2$ be the point in the boundary of $A$ determined by $\Phi_A(\xi_2)=\{a_1+k,a_2+l|k,l\in\Z,k\leq k_0\}$. The sector $Q(x,\xi_2)$ is determined in the same way as $Q(x,\xi_1)$, but we now have to stop at the root $a_1+k_0$. Thus, we get a `half-strip' which goes from $x$, stops at the root $a_1+k_0$, and is in the direction of $a_2$. Once again, the precise shape of this half-strip depends on the orientation of $x$. These two possibilites are described on Figure 2.

\begin{figure}[h]
   \begin{minipage}[c]{.46\linewidth}
\includegraphics*[120,110][320,300]{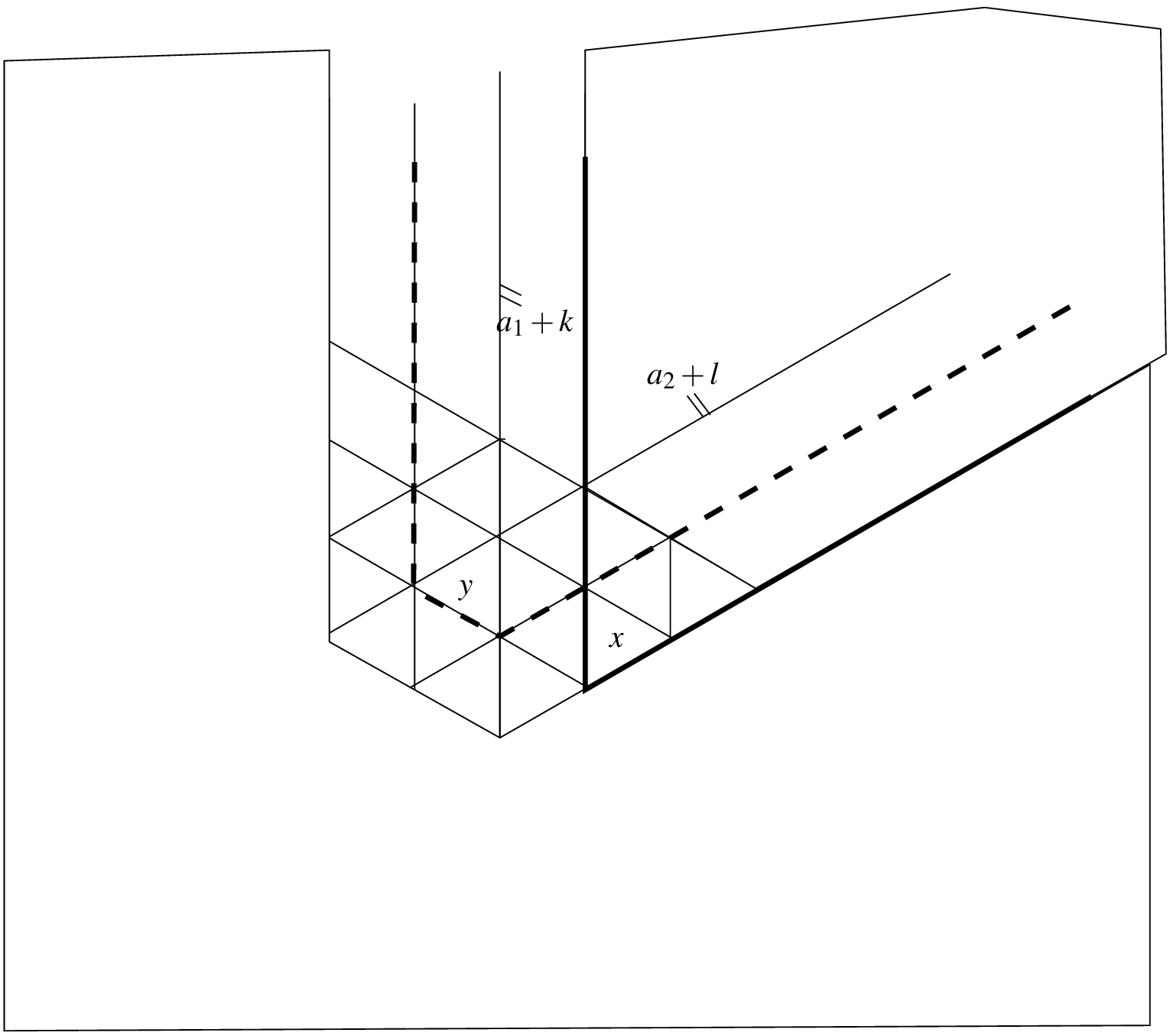}%???
\caption{$Q(x,\xi_1)$ and $Q(y,\xi_1)$, where $\Phi(\xi_1)=\{
a_1+k,a_2+l|k,l\in\Z\}$}
   \end{minipage} \hfill
   \begin{minipage}[c]{.46\linewidth}
\includegraphics*[180,130][375,350]{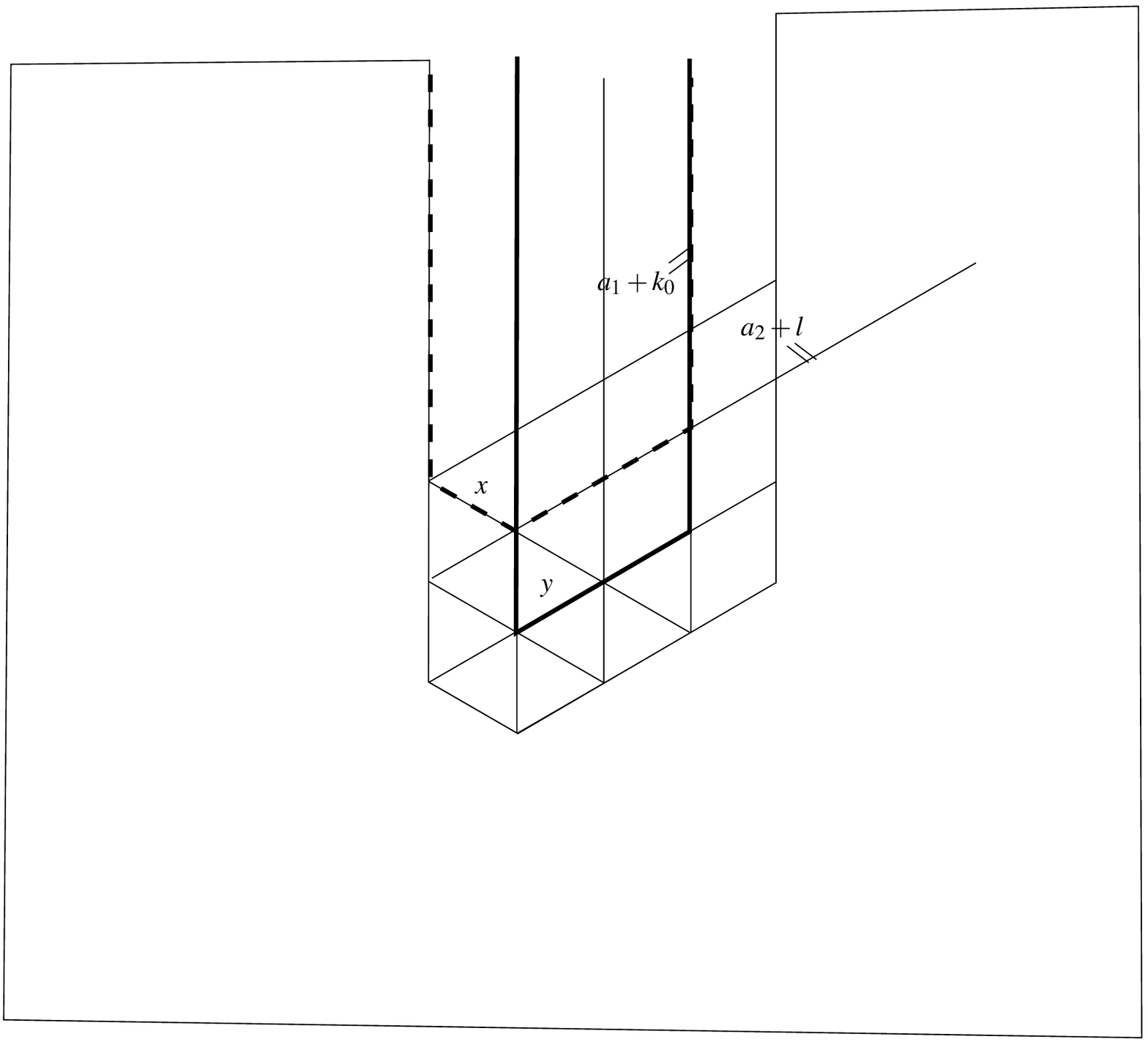}%???
\caption{$Q(x,\xi_2)$ and $Q(y,\xi_2)$, where
$\Phi(\xi_2)=\{a_1+k,a_2+l|k,l\in\Z,k\leq k_0\}$}
   \end{minipage}
\end{figure}

\end{example}

\subsection{Sectors and half-apartments}

The use of sectors will eventually allow us to study the combinatorial compactifications of $X$ by looking at
one apartment at a time. The first main goal is to obtain a proof of Proposition~\ref{prop:phi}. Not
surprisingly, retractions provide an important tool.

\begin{lemma}\label{lem:retraction:1}
Let $x \in \ch(X)$ and $(R_n)$ be a sequence of spherical residues converging to some $\xi \in \Cr(X)$. Let $A$
be an apartment containing the sector $Q(x, \xi)$. Then we have
$$Q(x, \xi)= \bigcup_{k \geq 0} \bigcap_{n \geq k} \Conv(x, \rho_{A, x}(R_n)).$$
\end{lemma}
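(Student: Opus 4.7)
Throughout, let $\rho = \rho_{A,x}$ and denote the right-hand side of the claimed equality by $Q'$. The essential property of this retraction is that for any apartment $A'$ containing $x$, the restriction $\rho|_{A'}\colon A'\to A$ is a type-preserving Coxeter isomorphism fixing $x$ pointwise (and more generally $A\cap A'$). For each $n$, fix an apartment $A_n$ containing $x\cup R_n$; then $\rho|_{A_n}$ carries the collection of half-apartments of $A_n$ containing $x\cup R_n$ bijectively onto the collection of half-apartments of $A$ containing $x\cup\rho(R_n)$, and hence sends $\Conv(x,R_n)\subset A_n$ bijectively onto $\Conv(x,\rho(R_n))\subset A$.

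The inclusion $Q(x,\xi)\subset Q'$ is immediate from this setup. Given $R\in Q(x,\xi)$, we have $R\subset A$ by hypothesis, so $\rho(R)=R$; and by definition of the sector one has $R\in\Conv(x,R_n)$ for all $n\geq k$. Applying the isomorphism $\rho|_{A_n}$ yields $R=\rho(R)\in\Conv(x,\rho(R_n))$ for all $n\geq k$, i.e.\ $R\in Q'$.

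For the reverse inclusion $Q'\subset Q(x,\xi)$, the cleanest route is to establish that the retracted sequence $(\rho(R_n))$ converges in $\Cr(X)$ to the same limit $\xi$. Once this is done, Proposition~\ref{Qxi}(i) (independence of the sector from the defining sequence) gives $Q(x,(\rho(R_n)))=Q(x,\xi)$, and the left-hand side is exactly $Q'$ by definition. To establish this convergence, one must check that for every spherical residue $\sigma$ the projection $\proj_\sigma(\rho(R_n))$ eventually equals $\proj_\sigma(\xi)$. When $\sigma$ lies in a common apartment with $x$ and $R_n$, the Coxeter-isomorphism property of $\rho|_{A_n}$ yields the commutation $\proj_\sigma(\rho(R_n))=\rho(\proj_\sigma(R_n))$, after which the eventual stability $\proj_\sigma(R_n)=\proj_\sigma(\xi)$ together with $\rho$ fixing $\proj_\sigma(\xi)$ (whenever the latter lies in $A$) finishes the argument.

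I expect the main technical obstacle to be handling residues $\sigma$ for which no single apartment containing $\{x,R_n,\sigma\}$ is immediately available; there the naive commutation of $\proj_\sigma$ with $\rho$ does not apply. The remedy is to reduce, via the very definition of convergence in $\Cr(X)$, to projections onto panels adjacent to chambers, and to exploit the hypothesis $Q(x,\xi)\subset A$ together with Corollary~\ref{cor:proj} to ensure that whenever $\sigma$ lies in the sector, $\proj_\sigma(\xi)$ automatically lies in $A$ and is hence fixed by $\rho$. This will permit reducing the verification to configurations covered by the $A_n$-isomorphism argument above.
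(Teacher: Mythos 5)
Your forward inclusion $Q(x,\xi)\subset Q'$ is correct and is essentially the paper's argument in different clothing: the paper phrases it via minimal galleries from $x$, you phrase it via the isomorphism $\rho_{A,x}|_{A_n}\colon A_n\to A$, and both rest on the same standard property of retractions centred at a chamber of $A$. The problem is the reverse inclusion, which is the substantive half of the lemma. You reduce it to the claim that $(\rho_{A,x}(R_n))$ converges to $\xi$ in $\Cr(X)$, i.e.\ that $\proj_\sigma(\rho_{A,x}(R_n))$ stabilises to $\xi(\sigma)$ for \emph{every} $\sigma\in\Res(X)$, and you yourself flag the case of residues $\sigma$ with no common apartment containing $x$, $R_n$ and $\sigma$ as ``the main technical obstacle'', offering only a remedy that in fact treats only those $\sigma$ lying in the sector (hence in $A$). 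For a panel $\sigma$ far from $A$ the retraction does not commute with $\proj_\sigma$, and nothing in your outline controls $\proj_\sigma(\rho_{A,x}(R_n))$. This is a genuine gap rather than a routine detail: in the paper's logical order, the statement you need is exactly Lemma~\ref{lem:retraction:3} combined with Corollary~\ref{cor:convergence:criterion}, and both are proved \emph{after} and \emph{by means of} Lemma~\ref{lem:retraction:1} (through Lemmas~\ref{lem:retraction:2} and~\ref{lem:roots}, and, for the corollary, through the filtering property of Proposition~\ref{mmapp2}). Filling your gap by appeal to that machinery would therefore be circular.

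The paper sidesteps all of this by proving $Q'\subset Q$ directly at the level of chambers: take $C\in\ch(Q')\setminus\ch(Q)$ at minimal distance from $\ch(Q)$, so that $C$ is adjacent to some $C'\in\ch(Q)$ across a panel $\sigma\subset Q\subset A$; the gate property identifies $\proj_\sigma(R_n)$ with the $\rho_{A,x}$-preimage $C_n$ of $C$ in $\Conv(x,R_n)$, so $\xi(\sigma)=C_n$ for large $n$; the $\xi$-convexity of $Q$ (Proposition~\ref{Qxi}(ii)) then forces $C_n\subset A$, whence $C_n=\rho_{A,x}(C_n)=C\in\Conv(x,R_n)$, contradicting $C\notin Q$. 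If you wish to keep your strategy, you must prove the convergence of the retracted sequence at arbitrary residues from scratch, and doing so honestly essentially reproduces this local argument anyway.
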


\begin{proof}
Let $Q_k = \bigcap_{n \geq k} \Conv(x, R_n)$,  $Q'_k = \bigcap_{n \geq k} \Conv(x, \rho_{A, x}(R_n))$, $Q = Q(x,
\xi) = \bigcup Q_k$ and $Q' = \bigcup Q'_k$. We must show that $Q = Q'$. Since these are both (closed and)
convex and contain the chamber $x$, it suffices to show that $\ch(Q) = \ch(Q')$.

Fix $k\geq 0$. Let $C \in \ch(Q_k)$ and let $n \geq k$. Then $C$ belongs to a minimal gallery from $x$ to a
chamber containing $R_n$. Since $C \subset A$, the retraction $\rho_{A, x}$ fixes $C$ and hence $C$ belongs to a
minimal gallery from $x$ to a chamber containing $\rho_{A, x}(R_n)$. This shows that $\ch(Q_k) \subset
\ch(Q'_k)$. Therefore we have $\ch(Q) \subset \ch(Q')$.% whence $Q \subset Q'$.

Suppose for a contradiction that there exists some $C \in \ch(Q') \setminus \ch(Q)$. Choose $C$ at minimal
possible distance to $\ch(Q)$. Thus $C$ is adjacent to some chamber $C' \in \ch(Q)$. Let $\sigma = C \cap C'$ be
the panel shared by $C$ and $C'$ and $\alpha$ be the half-apartment containing $C'$ and such that $\partial
\alpha$ contains $\sigma$. Since $\sigma \subset Q$, we have $\proj_\sigma(x) \subset Q$, whence
$\proj_\sigma(x)= C'$ and $x \subset \alpha$.

%Since $\sigma \subset Q$, it also follows that $\xi(\sigma)$ is either $C'$ or $\sigma$ itself.

Let now $k$ be such that $C \subset \Conv(x,\rho_{A, x}(R_n))$ for all $n \geq k$ and let $C_n$ denote the
unique element of $\Conv(x,R_n)$ such that $\rho_{A, x}(C_n)= C$. Each $C_n$ contains the panel $\sigma$ and we
have $\proj_\sigma(R_n)=C_n$. %(see Lemma~\ref{lem:conv}).
Therefore, we deduce that $\xi(\sigma)=C_n$ (see Lemma~\ref{proj}). Since $Q \subset A$, this implies that $C_n
\subset A$ by Proposition~\ref{Qxi}. Therefore we have  $C=\rho_{A, x}(C_n) = C_n$, whence $C \subset \Conv(x,
R_n)$ for all $n \geq k$. This implies that $C$ is contained in $Q$, which is absurd.
\end{proof}

\begin{lemma}\label{lem:retraction:2}
Let $x \in \ch(X)$ and $(R_n)$ be a sequence of spherical residues converging to some $\xi \in \Cr(X)$. Let $A$
be an apartment containing the sector $Q(x, \xi)$. Then for any chamber $C \in \ch(A)$, we have $Q(C, \xi)
\subset A$. Moreover there exists $k \geq 0$ such that $\rho_{A, C}(R_n) = \rho_{A,x}(R_n)$ for all $n >k$.
\end{lemma}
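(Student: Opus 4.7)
The plan is to derive both claims from a single auxiliary construction: a sequence $(T_n)$ of spherical residues contained in $A$ and converging to $\xi$ in $\Cr(X)$. To build such $(T_n)$, observe that $Q(x,\xi) = \bigcup_k Q_k \subset A$ is infinite, since the defining sequence $(R_n)$ eventually leaves every bounded subset of $\Res(X)$. For each $n$, pick $T_n \in Q_n$ with root-distance at least $n$ from $x$. For any fixed $\sigma \in \Res(X)$, only finitely many walls pass through $\sigma$; for $n$ large the chamber $T_n$, being deep in the sector $Q(x,\xi) \subset A$, lies on the same side of each such wall as the eventual value $\xi(\sigma)$, since sequences deep in the sector eventually select the correct side of every wall. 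Hence $\proj_\sigma(T_n) = \xi(\sigma)$ for $n$ large, so $T_n \to \xi$ in $\Cr(X)$.

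The first claim $Q(C,\xi) \subset A$ is then immediate. By Proposition~\ref{Qxi}(i), the sector is independent of the defining sequence, so
$$Q(C,\xi) = \bigcup_{k \geq 0} \bigcap_{n \geq k} \Conv(C, T_n),$$
and each $\Conv(C, T_n) \subset A$ since both $C$ and $T_n$ lie in the convex apartment $A$.

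For the second claim, I would exhibit for $n$ large an apartment $B_n$ containing $\{x, C, R_n\}$; then both retractions $\rho_{A, x}$ and $\rho_{A, C}$ restrict on $B_n$ to the unique Coxeter complex isomorphism $B_n \to A$ fixing $A \cap B_n \supset \{x, C\}$, so they coincide on $R_n$. To produce $B_n$, I would combine the first claim applied to both $x$ and $C$: both sectors $Q(x,\xi)$ and $Q(C,\xi)$ lie in $A$, and for $n$ sufficiently large $R_n$ is ``far enough in the direction of $\xi$'' that an apartment $A_n$ through $R_n$ can be chosen with $A \cap A_n$ containing a common tail of the two sectors, hence containing both $x$ and $C$. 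The main technical obstacle is precisely this last step: verifying that, as $n \to \infty$, the apartments through $R_n$ can be chosen so that $A \cap A_n$ exhausts any prescribed finite convex subcomplex of $A$, in particular $\Conv(x,C)$. Concretely, one reduces this to showing that the stabilized projection $\proj_{\Conv(x,C)}(R_n) = \proj_{\Conv(x,C)}(T_n)$ (equal for $n$ large by Corollary~\ref{cor:sector:local}) is a full-dimensional residue of $\Conv(x,C)$, from which the existence of the desired apartment follows via standard extension axioms for apartments in buildings.
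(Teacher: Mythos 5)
Your proposal rests on an auxiliary sequence $(T_n)\subset A$ converging to $\xi$ in $\Cr(X)$, and the construction you give for it does not work. Choosing $T_n$ in $Q(x,\xi)$ merely with $d(x,T_n)\to\infty$ does not force convergence to $\xi$: in an apartment of type $\tilde A_2$ with $\xi$ the ``generic'' boundary point of a Weyl sector (Example~\ref{exA2}), a sequence going to infinity in $Q(x,\xi)$ while hugging one of the two bounding walls converges to one of the ``half-strip'' boundary points $\xi_2\neq\xi$; concretely, for a panel $\sigma$ on a wall parallel to the hugged one, $\proj_\sigma(T_n)$ stays on the wrong side forever, so $\proj_\sigma(T_n)\neq\xi(\sigma)$. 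What you need is that $(T_n)$ eventually penetrates the interior of \emph{every} root in $\Phi_A(\xi)$ whose opposite is not in $\Phi_A(\xi)$, and that such a sequence then converges to $\xi$ in $\Cr(X)$ and not merely in $\Cr(A)$. That is exactly the content of Lemma~\ref{lem:retraction:3} and Corollary~\ref{cor:convergence:criterion}, which in the paper come \emph{after} the present lemma and depend on Lemma~\ref{lem:roots} and on the very statement you are proving (the independence of $\Phi_A(\xi)$ from the base chamber is justified by Lemma~\ref{lem:retraction:2}). So this step is both false as written and, once repaired, dangerously close to circular. Granting a correct $(T_n)$, your derivation of $Q(C,\xi)\subset A$ via Proposition~\ref{Qxi}(i) is fine, and is in fact a clean way to get the first claim for all $C$ at once.

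The second claim has an independent gap. You reduce it to producing, for large $n$, an apartment $B_n$ containing $x$, $C$ and $R_n$; the coincidence of the two retractions on such a $B_n$ is indeed standard. But you do not produce $B_n$: there is no ``standard extension axiom'' giving an apartment through a prescribed convex chamber subcomplex and an extra residue in an arbitrary building (three chambers of a thick building need not lie in a common apartment), and the quantity $\proj_{\Conv(x,C)}(R_n)$ you invoke is not defined, since $\Conv(x,C)$ is a convex set of residues rather than a residue. The paper avoids this entirely: by connectedness of $A$ it reduces to the case where $C$ is adjacent to $x$, and then argues according to the three possible values of $\xi(\sigma)$ for the separating panel $\sigma$ (which must lie in $\{x,C,\sigma\}$ because $\xi(\sigma)\subset A$ by Proposition~\ref{Qxi}); only in the case $\xi(\sigma)=\sigma$ does one need to build an apartment containing $x$, $C$ and $R_n$, and there it is glued explicitly from two half-apartments along the wall through $\sigma$. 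I would encourage you to adopt that reduction to adjacent chambers: it sidesteps both the need for the auxiliary sequence and the apartment-extension problem.
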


\begin{proof}
By connexity of $A$, it suffices to prove that for any chamber $C$ adjacent to $x$, the sector $Q(C, \xi)$ is
contained in $A$ and furthermore that $\rho_{A, C}(R_n) = \rho_{A,x}(R_n)$ for all sufficiently large $n$. Let
$\sigma = x \cap C$ be the panel separating $x$ from $C$.

Since $\sigma \subset Q(x, \xi)$, Proposition~\ref{Qxi} implies that $\xi(\sigma)$ is contained in $A$. The only
possible values for $\xi(\sigma)$ are thus $x, C$ and $\sigma$. We treat these three cases successively.

If $\xi(\sigma)=C$ then $C \subset Q(x, \xi)$, whence $Q(C, \xi) \subset A$ by Remark~\ref{rem:sectors}. The
desired claims follow by definition.

If $\xi(\sigma)=x$ then $x \in \Conv(C, R_n)$ for $n$ sufficiently large. Thus there is a minimal gallery from
$C$ to a chamber containing $R_n$ via $x$ and it follows that $\rho_{A, C}(R_n) = \rho_{A,x}(R_n)$ since $x \in
\ch(A)$. The fact that $Q(C, \xi)$ is contained in $A$ now follows from Lemma~\ref{lem:retraction:1}.

If $\xi(\sigma)=\sigma$, pick a large enough $n$ so that $\proj_\sigma(R_n)=\sigma$. Let $A_n$ be an apartment
containing $x \cup R_n$. Then $R_n$ lies on a wall $H_n$ of $A_n$ containing $\sigma$, and the convex hull
$\Conv(x, R_n)$ lies entirely on one side of $H_n$; we call the latter half-apartment $\alpha$. Since the
chamber $C$ contains the panel $\sigma \subset H_n$, there is a half-apartment $\beta$ bounding $H_n$ and
containing $C$. Therefore, upon replacing $A_n$ by $\alpha \cup \beta$, there is no loss of generality in
assuming that $C$ is contained in $A_n$. It follows readily that  $\rho_{A, C}(R_n) = \rho_{A,x}(R_n)$. As in
the previous case, the fact that $Q(C, \xi)$ is contained in $A$ follows from Lemma~\ref{lem:retraction:1}.
\end{proof}

The following is an analogue of Lemma~\ref{remarque} in the non-thin case.

\begin{lemma}\label{lem:roots}
Let $x \in \ch(X)$, $(R_n)$ be a sequence of spherical residues converging to some $\xi \in \Cr(X)$ and $A$ be
an apartment containing the sector $Q(x, \xi)$. Set $R'_n = \rho_{A,x}(R_n)$. For any half-apartment $\alpha$ of
$A$, there is some $N$ such that for all $n >N$ we have $R'_n \subset \alpha$ or $R'_n \subset -\alpha$.
\end{lemma}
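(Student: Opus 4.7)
The plan is to show that the sequence $(R'_n)$ converges in $\Cr(A)$ and then to apply the first half of Lemma~\ref{remarque} with both sequences taken to be $(R'_n)$; the case $R'_n \subset \partial \alpha$ in that trichotomy is subsumed in the conclusion since $\partial \alpha \subset \alpha$ and $\partial \alpha \subset -\alpha$.

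To establish convergence in $\Cr(A)$, I would show that $\proj_\sigma(R'_n)$ is eventually constant for every $\sigma \in \Res(A)$. Fix such a $\sigma$ and pick a chamber $C \in \ch(A)$ having $\sigma$ as a face, so that $C \in \St(\sigma) \cap \ch(A)$. Lemma~\ref{lem:retraction:2} gives $\rho_{A,C}(R_n) = \rho_{A,x}(R_n) = R'_n$ for all $n$ sufficiently large, so it suffices to compute $\proj_\sigma(\rho_{A,C}(R_n))$.

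The key step is the retraction--projection commutation
\[
\proj_\sigma(\rho_{A,C}(R)) = \rho_{A,C}(\proj_\sigma(R)) \qquad \text{for every } R \in \Res(X).
\]
To verify it, given $R \in \Res(X)$, pick an apartment $A'$ containing $\{C\} \cup R$; then $\sigma \subset C \subset A'$, and $\rho_{A,C}|_{A'}$ is a type-preserving simplicial isomorphism $A' \to A$ fixing $A \cap A'$ pointwise, and in particular fixing $\sigma$. Since simplicial isomorphisms carry convex hulls to convex hulls and preserve the unique maximal residue containing $\sigma$ inside them, the commutation follows from Lemma~\ref{lem:proj-conv}.

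Combining these ingredients, $\proj_\sigma(R'_n) = \rho_{A,C}(\proj_\sigma(R_n))$ for $n$ large; and since $R_n \to \xi$ in $\Cr(X)$ forces $\proj_\sigma(R_n) = \xi(\sigma)$ eventually, we conclude that $\proj_\sigma(R'_n)$ is eventually equal to the fixed residue $\rho_{A,C}(\xi(\sigma)) \in \Res(A)$. Thus $(R'_n)$ converges in $\Cr(A)$, and Lemma~\ref{remarque} applied with $R_n = T_n = R'_n$ yields the desired trichotomy and hence the claim. I anticipate the main technical hurdle to be the careful verification of the commutation identity above; the rest is a formal combination of the lemmas established earlier in this section.
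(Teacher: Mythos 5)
Your argument is correct, but it follows a genuinely different route from the paper's. The paper argues by contradiction using Lemma~\ref{lem:retraction:1}: if $(R'_n)$ had subsequences strictly on both sides of $\partial\alpha$, then (choosing a chamber $C\subset A$ adjacent along $\partial\alpha$) the convex hulls $\Conv(C,R'_n)$ would fail to stabilise about the chamber $C'$ across the wall, contradicting the description $Q(C,\xi)=\bigcup_k\bigcap_{n\geq k}\Conv(C,R'_n)$ together with the independence of the sector from the approximating sequence (Proposition~\ref{Qxi}). You instead prove outright that $(R'_n)$ converges in $\Cr(A)$, by establishing the retraction--projection commutation $\proj_\sigma\circ\rho_{A,C}=\rho_{A,C}\circ\proj_\sigma$ (for $\sigma\subset C\in\ch(A)$) and combining it with Lemma~\ref{lem:retraction:2} and the convergence $R_n\to\xi$; then Lemma~\ref{remarque} applied to the thin building $A$ delivers the trichotomy. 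Your commutation identity is correct: the restriction $\rho_{A,C}|_{A'}$ to any apartment $A'\ni C$ is a type-preserving isomorphism $A'\to A$ fixing $A\cap A'\supset\{\sigma\}$ pointwise, and projections can be computed inside any apartment by Lemma~\ref{lem:proj-conv}, so both sides agree. The interesting point is that your route effectively proves the convergence half of the \emph{next} lemma (\ref{lem:retraction:3}) en passant, thereby reversing the paper's logical order (the paper deduces Lemma~\ref{lem:retraction:3} \emph{from} Lemma~\ref{lem:roots}). What each buys: the paper's contradiction argument is shorter and uses only the sector description already in hand; yours is more constructive, does a bit more work up front via the commutation identity, but returns the stronger convergence statement for free. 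One small caution in the write-up: Lemma~\ref{remarque} is stated for $f\in\Cr(X)$, whereas you establish convergence in $\Cr(A)$; this is harmless either because the first direction of Lemma~\ref{remarque} only uses panel-wise eventual constancy (which you have), or, more cleanly, because $\Cr(A)$ is exactly the maximal bordification of the thin building $A$ and you may apply Lemma~\ref{remarque} to $A$ itself -- but it is worth flagging so the step does not look like a forward reference to Corollary~\ref{cor:convergence:criterion}.
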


\begin{proof}
Let $C, C'$ be the chambers of $A$ such that $C \cap C' = \sigma$. Let also $\alpha$ be the half-apartment
containing $C$ but not $C'$. Assume that the sequence $(R'_n)$ possesses two subsequences $R'_{\varphi(n)}$ and
$R'_{\psi(n)}$ such that $R'_{\varphi(n)}$ is strictly contained in $\alpha$ and $R'_{\psi(n)}$ is strictly
contained in $-\alpha$. Then $\Conv(C,R'_{\psi(n)})$ contains $C'$ for all $n$ while  $\Conv(C,R'_{\varphi(n)})$
does not. This contradicts Lemma~\ref{lem:retraction:1}, thereby showing that the sequence $R'_n$ eventually
remains on one side of the wall $\partial \alpha$.
\end{proof}

\begin{definition}
Let $\Phi_A(\xi)$ denote the set of all half-apartments $\alpha$ of $A$ such that the sequence $(R'_n)$
eventually lies in $\alpha$. In view of Lemma~\ref{lem:retraction:2}, this set is independent of $x \in \ch(A)$,
but depends only on $A$ and $\xi$.
\end{definition}

One should think of the elements of $\Phi_A(\xi)$ as half-apartments `containing' the point $\xi$. Notice that
if $\xi = R$ is a residue, then $\Phi_A(\xi)$ is nothing but the set of those half-apartments which contain $R$.

Notice that two opposite roots $\alpha, -\alpha$ might be both contained in $\Phi_A(\xi)$; in view of
Lemma~\ref{lem:roots}, this happens if and only if the residue $(R'_n)$ lies on the wall $\partial \alpha$ for
all sufficiently large $n$.

\begin{lemma}\label{lem:retraction:3}
Let $x \in \ch(X)$ and $(R_n)$ be a sequence of spherical residues converging to some $\xi \in \Cr(X)$. Let $A$
be an apartment containing the sector $Q(x, \xi)$. Then the sequence $(\rho_{A, x}(R_n))$ converges in $\Cr(A)$
and its limit coincides with the restriction of $\xi$ to $\Res(A)$.

Furthermore, for any $\xi' \in \Cr(A)$, we have $\xi' = \xi$ if and only if $\Phi_A(\xi') = \Phi_A(\xi)$.
\end{lemma}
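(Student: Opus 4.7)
The plan is to prove the two assertions in turn; write $R'_n := \rho_{A,x}(R_n)$ throughout.

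For the convergence, I apply Lemma~\ref{lem:roots}: for every half-apartment $\alpha$ of $A$, the sequence $(R'_n)$ eventually lies in $\alpha$ or in $-\alpha$. This is exactly the hypothesis of the converse direction of Lemma~\ref{remarque}, so $(R'_n)$ converges to some $\xi_A \in \Cr(A)$. To identify $\xi_A$ with $\xi|_{\Res(A)}$, I first verify that $\xi(\sigma) \in \Res(A)$ for every $\sigma \in \Res(A)$: picking $C \in \ch(A)$ containing $\sigma$, Lemma~\ref{lem:retraction:2} gives $Q(C, \xi) \subset A$; since this sector is closed and contains $C$, it contains $\sigma$, so Remark~\ref{rem:sectors} yields $Q(\sigma, \xi) \subset Q(C, \xi) \subset A$, whence Corollary~\ref{cor:proj} gives $\xi(\sigma) = \proj_\sigma(\xi) \in Q(\sigma, \xi) \subset A$.

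To equate $\xi_A(\sigma)$ with $\xi(\sigma)$, I change basepoint via Lemma~\ref{lem:retraction:2} to a chamber $C \in \ch(A)$ containing $\sigma$: since $\rho_{A, C}(R_n) = R'_n$ for $n$ sufficiently large, Lemma~\ref{lem:retraction:1} gives $Q(C, \xi) = \bigcup_k \bigcap_{n \geq k} \Conv(C, R'_n)$, the right-hand side being precisely the sector $Q(C, \xi_A)$ computed inside $A$. Both sectors are convex, hence determined by their chambers, so $Q(C, \xi) = Q(C, \xi_A)$ as sets of residues. Corollary~\ref{cor:proj} characterizes $\xi(\sigma)$ and $\xi_A(\sigma)$ as the unique maximal residues containing $\sigma$ inside $Q(\sigma, \xi)$ and $Q(\sigma, \xi_A)$ respectively; the equality of these sub-sectors (both contained in $Q(C, \xi) = Q(C, \xi_A)$) is then obtained by an inductive application of the $\xi$-convex-hull characterization of Proposition~\ref{Qxi}(ii), forcing $\xi(\sigma) = \xi_A(\sigma)$.

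For the second statement, the direction $\xi' = \xi \Rightarrow \Phi_A(\xi') = \Phi_A(\xi)$ is immediate from the definitions. For the converse, given $\xi' \in \Cr(A)$ with $\Phi_A(\xi') = \Phi_A(\xi)$, take any sequence $(T_n) \subset \Res(A)$ converging to $\xi'$; Lemma~\ref{remarque} guarantees that $(T_n)$ eventually lies on a fixed side of every wall of $A$, and the set of half-apartments eventually containing $(T_n)$ coincides with $\Phi_A(\xi')$. Lemma~\ref{lem:proj-conv} then identifies the eventual value of $\proj_\sigma(T_n)$ for each $\sigma \in \Res(A)$ with the unique maximal residue of $\St(\sigma)$ contained in $\bigcap\{\alpha \in \Phi_A(\xi') : \sigma \subset \alpha\}$, a quantity depending only on $\Phi_A(\xi')$; the same description applies to $\xi$ via the sequence $(R'_n)$, and the equality of $\Phi_A$-invariants forces $\xi' = \xi$ on $\Res(A)$. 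The main technical obstacle is the identification of sub-sectors $Q(\sigma, \xi) = Q(\sigma, \xi_A)$ for non-chamber $\sigma$: the convex-hull description of Lemma~\ref{lem:retraction:1} requires a chamber basepoint, so refining the chamber-level equality $Q(C, \xi) = Q(C, \xi_A)$ down to the sub-sector based at $\sigma \subset C$ demands a careful inductive argument, most naturally carried out via Proposition~\ref{Qxi}(ii) combined with an induction on root-distance from $\sigma$.
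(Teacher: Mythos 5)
Your treatment of the convergence statement and of the chamber-level identity $Q(C,\xi)=Q(C,\xi_A)$ for $C\in\ch(A)$ (via Lemmas~\ref{lem:roots}, \ref{remarque}, \ref{lem:retraction:2} and~\ref{lem:retraction:1}) follows the same route as the paper, and your argument for the ``furthermore'' clause via the converse direction of Lemma~\ref{remarque} together with Lemma~\ref{lem:proj-conv} is sound. The problem is the step you yourself flag as ``the main technical obstacle'': passing from the equality of sectors based at \emph{chambers} of $A$ to the equality $\xi(\sigma)=\xi_A(\sigma)$ for an arbitrary $\sigma\in\Res(A)$. You do not actually carry this out, and the route you gesture at is shaky: Proposition~\ref{Qxi}(ii) describes $Q(\sigma,\xi)$ as the smallest closed set containing $\sigma$ and stable under $\tau\mapsto\proj_\tau(\xi)$, so it presupposes knowledge of the very projections $\proj_\tau(\xi)$ you are trying to identify, and an induction built on it risks circularity. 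The clean statement you would want, namely that $Q(\sigma,\xi)$ is the intersection of the sectors $Q(y,\xi)$ over the chambers $y$ of $A$ containing $\sigma$, is Corollary~\ref{cor:SectorFace}; but its proof in the paper invokes Lemma~\ref{lem:retraction:3} itself, so you cannot appeal to it here.

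The paper closes this gap by reducing to panels instead of attacking general residues. For a panel $\sigma$ of $A$ with adjacent chambers $C,C'\subset A$, the value $\xi(\sigma)\in\{C,C',\sigma\}$ is read off directly from chamber-based sectors: $\xi(\sigma)=C$ if and only if $C$ lies in both $Q(C,\xi)$ and $Q(C',\xi)$, and similarly for the other two cases. Since $Q(y,\xi)=Q(y,\xi_A)$ for every chamber $y$ of $A$ --- exactly what you established --- this yields $\xi(\sigma)=\xi_A(\sigma)$ on all panels of $A$. One then concludes for an arbitrary residue $R$ of $A$ because $\proj_R(\cdot)$ is determined by the projections onto the panels containing $R$. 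Substituting this panel reduction for your unfinished induction would make the proof complete.
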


\begin{proof}
Let $R \in \Res(A)$ and $H(R)$ denote the (finite) set of all walls containing $R$. By Lemma~\ref{lem:roots},
there is some $N$ such that $R'_n$ remains on one side of each wall in $H(R)$ for all $n >N$. The fact that  the
sequence $(R'_n)$ converges to some $\xi' \in \Cr(A)$ thus follows from Lemma~\ref{remarque}. By construction we
have $\Phi_A(\xi') = \Phi_A(\xi)$. All it remains to show is thus that $\xi $ and $\xi'$ coincide.

We first show that they coincide on panels. Let thus  $\sigma \subset A$ be a panel and $C,C' \in \ch(A)$ be
such that $C \cap C' = \sigma$ and $C \subset \alpha$. The following assertions are straightforward to check:
\begin{itemize}
\item $\xi(\sigma) = C$ if and only if $Q(C, \xi)$ and $Q(C', \xi)$ both contain $C$;

\item $\xi(\sigma) = C'$ if and only if $Q(C, \xi)$ and $Q(C', \xi)$ both contain $C'$;

\item $\xi(\sigma) = \sigma $ if and only if $Q(C, \xi)$ does not contain $C'$ and vice-versa.
\end{itemize}
Now remark that $Q(y, \xi) = Q(y, \xi')$ for any chamber $y \in \ch(A)$ in view of Lemma~\ref{lem:retraction:1}.
Therefore, we deduce from the above that $\xi$ and $\xi'$ coincide on $\sigma$.

It remains to observe that for any two spherical residues $R, R'$, the projection $\proj_R(R')$ is uniquely
determined by the set of all projections $\proj_\sigma(R')$ on panels $\sigma$ containing $R$. %DETAILS ???
\end{proof}

The next result supplements the description provided by Proposition~\ref{Qxi}.

\begin{proposition}\label{prop:sector:roots}
Let $x \in \ch(X)$,  $\xi \in \Cr(X)$ and $A$ be an apartment containing the sector $Q(x, \xi)$. Then we have
$$Q(x, \xi) = \bigcap_{\alpha \in \Phi_A(x) \cap \Phi_A(\xi)} \alpha.$$
\end{proposition}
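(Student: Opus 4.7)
The plan is to prove both inclusions separately, using the retraction-based description of $Q(x,\xi)$ provided by Lemma~\ref{lem:retraction:1}. Fixing a sequence $(R_n)$ of spherical residues converging to $\xi$ and writing $R'_n = \rho_{A,x}(R_n)$, one may work entirely inside the apartment $A$ via the reformulation
$$Q(x,\xi) = \bigcup_{k \geq 0} \bigcap_{n \geq k} \Conv(x, R'_n).$$

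The easy inclusion $Q(x,\xi) \subset \bigcap_{\alpha \in \Phi_A(x) \cap \Phi_A(\xi)} \alpha$ follows almost by unfolding definitions: if $R \subset Q(x,\xi)$, then $R \subset \Conv(x, R'_n)$ for all $n$ past some threshold; and any $\alpha \in \Phi_A(x) \cap \Phi_A(\xi)$ contains $x$ and eventually contains $R'_n$, hence eventually contains the whole convex hull $\Conv(x, R'_n)$, and therefore contains $R$.

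The reverse inclusion is more delicate. Given a spherical residue $R \subset A$ contained in every $\alpha \in \Phi_A(x) \cap \Phi_A(\xi)$, the goal is to produce some $N$ such that $R \subset \Conv(x, R'_n)$ for all $n \geq N$. The crucial ingredient is that the set $\Phi_A(x, R)$ of half-apartments of $A$ containing $x$ but not $R$ is finite, since its cardinality is bounded by $2d(x, R)$. For each such $\alpha$, the hypothesis on $R$ rules out $\alpha \in \Phi_A(\xi)$; Lemma~\ref{lem:roots} then forces $R'_n$ eventually into $-\alpha$, and in fact $R'_n \not\subset \alpha$ for large $n$ (if $R'_n$ lay on $\partial \alpha$ eventually, both $\alpha$ and $-\alpha$ would lie in $\Phi_A(\xi)$, contradicting $\alpha \notin \Phi_A(\xi)$). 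Finiteness of $\Phi_A(x, R)$ then supplies a single uniform threshold $N$ beyond which no $\alpha \in \Phi_A(x, R)$ contains $R'_n$; equivalently, every half-apartment containing both $x$ and $R'_n$ contains $R$, so $R \subset \Conv(x, R'_n)$ for all $n \geq N$, and therefore $R \subset Q(x,\xi)$.

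The main obstacle I anticipate is extracting a \emph{uniform} index $N$ from the pointwise dichotomy of Lemma~\ref{lem:roots}; this is precisely the step where finiteness of $\Phi_A(x, R)$, guaranteed by the root-distance bound, becomes essential.
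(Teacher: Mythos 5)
Your proof is correct and follows essentially the same route as the paper's, which likewise combines Lemma~\ref{lem:retraction:1} with the characterisation of $\Phi_A(\xi)$ via the retracted sequence $(R'_n)$ --- the paper's version is far terser, and your explicit use of the finiteness of $\Phi_A(x,R)$ to extract a uniform threshold $N$ is exactly the point the paper leaves implicit. The only step stated a little loosely is the exclusion of $R'_n \subset \partial\alpha$: a priori this could occur for infinitely many $n$ without occurring eventually (a case your parenthetical does not cover, and which Lemma~\ref{lem:roots} alone does not exclude), but it is ruled out because $(R'_n)$ converges in $\Cr(A)$ by Lemma~\ref{lem:retraction:3}, so the projections $\proj_\sigma(R'_n)$ to panels $\sigma \subset \partial\alpha$ are eventually constant, which yields the strict trichotomy your case analysis needs.
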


\begin{proof}
Set $Q =Q(x, \xi)$  and $Q'= \bigcap_{\alpha \in \Phi_A(x) \cap \Phi_A(\xi)} \alpha$. By
Lemma~\ref{lem:retraction:3} there is a sequence $(R'_n)$ of spherical residues of $A$ converging to $\xi$ in
$\Cr(A)$.

A half-apartment $\alpha$ belongs to $\Phi_A(\xi)$ if and only if $R'_n \subset \alpha$ for any sufficiently
large $n$. Therefore, the equality $Q = Q'$ follows from Lemma~\ref{lem:retraction:1}.
\end{proof}

The following result allows one to extend all the results of this section to sectors based at any spherical
residues, and not only at chambers. It shows in particular that sectors based at residues which are not chamber
may be thought of as \textbf{sector-faces}:

\begin{corollary}\label{cor:SectorFace}
Let $x \in \Res(X)$, $\xi \in \Cr(X)$ and $A$ be an apartment containing the sector $Q(x, \xi)$. Then $Q(x,\xi)$
coincides with the intersection of all sectors $Q(y, \xi)$ where $y$ runs over the set of chambers of $A$
containing $x$.
\end{corollary}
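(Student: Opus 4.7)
The inclusion $Q(x,\xi) \subset \bigcap_{y} Q(y,\xi)$ (where $y$ ranges over the finite set $\ch(A) \cap \St(x)$) is straightforward. Fix any sequence $(R_n)$ converging to $\xi$. Since $x \subset y$, any convex set containing $y \cup R_n$ also contains $x \cup R_n$, so $\Conv(x,R_n) \subset \Conv(y,R_n)$ for every $n$, and passing to the union of intersections gives $Q(x,\xi) \subset Q(y,\xi)$ for each such $y$.

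For the reverse inclusion, take $z \in \bigcap_y Q(y,\xi)$. Since $\ch(A) \cap \St(x)$ is finite and each $Q(y,\xi) = \bigcup_k \bigcap_{n \geq k} \Conv(y, R_n)$, there exists $N$ with $z \in \Conv(y, R_n)$ simultaneously for all $y \in \ch(A) \cap \St(x)$ and all $n \geq N$. Proving that $z \in \Conv(x, R_n)$ for every $n \geq N$ will give $z \in \bigcap_{n \geq N} \Conv(x, R_n) \subset Q(x, \xi)$. Hence the corollary reduces to the following combinatorial statement, whose proof is the main step.

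\smallskip
\emph{Key Lemma.} For any spherical residue $x$, any apartment $A$ containing $x$ and any spherical residue $R$, one has
$$\bigcap_{y \in \ch(A) \cap \St(x)} \Conv(y, R) \subset \Conv(x, R).$$

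To prove the Key Lemma, fix a chamber $y_0 \in \ch(A) \cap \St(x)$ and choose an apartment $A'$ containing $y_0 \cup R$. Any $z$ in the left-hand intersection lies in $\Conv(y_0, R) \subset A'$. The retraction $\rho := \rho_{A', y_0}$ then fixes $z$ pointwise and restricts to an apartment isomorphism $A \to A'$ fixing $y_0$ and hence $x$; this induces a bijection $y \mapsto y^* := \rho(y)$ between $\ch(A) \cap \St(x)$ and $\ch(A') \cap \St(x)$. A standard retraction argument, applied within an apartment containing $y_0 \cup y$ (and then transferring $R$ via $\rho$), shows that the condition $z \in \Conv(y, R)$ forces $z = \rho(z) \in \Conv(y^*, R) \subset A'$ for every $y$. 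Inside the single apartment $A'$ we then compute
$$\bigcap_{y^* \in \ch(A') \cap \St(x)} \Conv(y^*, R) \;=\; \bigcap_{\alpha: x \cup R \subset \alpha} \alpha \;=\; \Conv(x, R),$$
using the root-theoretic description of the convex hull together with the elementary observation that a half-apartment $\alpha \subset A'$ contains $x$ if and only if it contains at least one chamber of $A'$ containing $x$ (the non-trivial case being when $x \subset \partial \alpha$, where chambers of $A'$ containing $x$ exist on both sides of $\partial \alpha$). This yields $z \in \Conv(x, R)$ as required.

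The main obstacle is the transport step: showing that $z \in \Conv(y, R)$ implies $z \in \Conv(y^*, R)$, which is delicate because the apartment containing $y \cup R$ need not contain $y_0$. One handles this by exploiting that $z$ is already known to lie in $A'$, so the retraction fixes it, and by using the fact that the roots of $A'$ that contain $y^* \cup R$ correspond via $\rho$ to roots that must contain $y \cup R$ (since $\rho$ preserves the combinatorial type of half-apartments emanating from $x$), so that any root of $A'$ separating $z$ from $\Conv(y^*, R)$ would yield a root of the ambient building separating $z$ from $\Conv(y, R)$, contradicting the hypothesis.
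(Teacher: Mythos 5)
Your overall skeleton is the same as the paper's: both inclusions are reduced, via $Q(y,\xi)=\bigcup_k\bigcap_{n\geq k}\Conv(y,R_n)$ and the finiteness of $\ch(A)\cap\St(x)$, to the statement that $\bigcap_{y}\Conv(y,R_n)\subset\Conv(x,R_n)$, and your proof of the easy inclusion $Q(x,\xi)\subset\bigcap_y Q(y,\xi)$ is correct. The difference is that you allow an \emph{arbitrary} sequence $(R_n)$ converging to $\xi$, which forces you to prove your Key Lemma for a residue $R$ lying outside $A$; the paper instead invokes Lemma~\ref{lem:retraction:3} to choose $(R_n)\subset\Res(A)$, after which the identity $\Conv(x,R_n)=\bigcap_{y}\Conv(y,R_n)$ is immediate from the description of convex hulls as intersections of roots of the single apartment $A$ (a root of $A$ contains $x$ if and only if it contains at least one chamber of $A$ containing $x$). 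You should make the same reduction: it eliminates the entire difficulty you flag as ``the main obstacle.''

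As written, that obstacle is a genuine gap. The transport step asserts that $z\in\Conv(y,R)$ forces $z\in\Conv(y^*,R)$ with $y^*=\rho_{A',y_0}(y)$, but the retraction centred at $y_0$ preserves root-distances to $y_0$, not to $R$, so it does not map $[y,R]$ into $[\rho(y),R]$ nor $\Conv(y,R)$ into $\Conv(\rho(y),R)$; no ``standard retraction argument'' applies here. (A retraction centred at a chamber $D\supset R$ of $A'$ does have this property, but it then fails to map $\ch(A)\cap\St(x)$ \emph{onto} $\ch(A')\cap\St(x)$, since $A$ need not contain $D$ -- so neither choice of centre gives you both halves of what you need.) The closing root-correspondence argument does not repair this: the relevant roots $\alpha'$ of $A'$ have $x\subset\partial\alpha'$ and $R\subset\alpha'$, and pulling $\alpha'$ back via $(\rho|_A)^{-1}$ produces a root of $A$, which cannot ``contain $y\cup R$'' because $R\not\subset A$. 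To make the argument work one would have to produce a root $\beta$ of an apartment containing $y\cup R\cup z$ with $y\cup R\subset\beta$ and $z\not\subset\beta$; showing that such a $\beta$ inherits from $\alpha'$ both the containment of $R$ (a condition far from $x$) and the exclusion of $z$ is precisely what needs proof, and your sketch assumes it. Replace the arbitrary sequence by one inside $A$ and the corollary follows in a few lines.
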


\begin{proof}
By Lemma~\ref{lem:retraction:3}, there is a sequence $(R_n)$ of spherical residues of $A$ converging to $\xi$ in
$\Cr(A)$. By Proposition~\ref{Qxi} we have $ Q(x, \xi) = \bigcup_{k \geq 0} \bigcap_{n \geq k} \Conv(R, R_n)$.

Let $\mathcal C$ be the set of all chambers of $A$ containing $x$. Using the fact that the convex hull of two
residues is nothing but the intersection of all roots containing them, we deduce that for any $R_n$ we have
$$\Conv(x, R_n) = \bigcap_{y \in \mathcal C} \Conv(y, R_n).$$
It follows that
$$Q(x, \xi) = \bigcup_{k \geq 0} \bigcap_{y \in \mathcal C} \bigcap_{n \geq k} \Conv(y, R_n).$$
All it remains to show is thus that
$$\bigcup_{k \geq 0} \bigcap_{y \in \mathcal C} \bigcap_{n \geq k} \Conv(y, R_n) =
\bigcap_{y \in \mathcal C} \bigcup_{k \geq 0} \bigcap_{n \geq k} \Conv(y, R_n).$$ To establish this equality,
notice that the inclusion $\subset$ is immediate. The reverse inclusion follows similarly using the fact that
$\mathcal C$ is a finite set.
\end{proof}

We close this section with the following subsidiary fact.

\begin{lemma}\label{suitapp}
Let $x \in \Res(X)$,  $\xi \in \Cr(X)$ and $A$ be an apartment containing the sector $Q = Q(x, \xi)$. Then for
all $\alpha_1, \dots, \dots \alpha_n \in \Phi_A(\xi)$, the intersection
$$
Q(x,\xi)\cap (\bigcap_{i=1}^n \alpha_i)
$$
is non-empty.
\end{lemma}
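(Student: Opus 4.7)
My plan is to proceed by induction on $n$, reducing the assertion to the case of a single half-apartment and then arguing that a retracted sequence converging to $\xi$ eventually lies both in the sector and in the half-apartment.

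The base case $n=0$ is immediate, since $x \in Q(x,\xi)$. For the inductive step, the hypothesis applied to $\alpha_1,\dots,\alpha_{n-1}$ will yield a residue $y \in Q(x,\xi) \cap \bigcap_{i<n} \alpha_i$. I would then invoke Remark~\ref{rem:sectors} to get $Q(y,\xi) \subset Q(x,\xi)$, and observe that since $y \subset \alpha_i$ and $\alpha_i \in \Phi_A(\xi)$, one has $\alpha_i \in \Phi_A(y) \cap \Phi_A(\xi)$; Proposition~\ref{prop:sector:roots}, combined with Corollary~\ref{cor:SectorFace} to handle the case where $y$ is not a chamber (which yields $Q(y,\xi) = \bigcap_{\beta \in \Phi_A(y) \cap \Phi_A(\xi)} \beta$ for arbitrary residues $y$), then gives $Q(y,\xi) \subset \alpha_i$ for each $i < n$. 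It thus suffices to find a residue in $Q(y,\xi) \cap \alpha_n$, reducing the problem to $n=1$ with base point $y$.

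For the single half-apartment case with base point $x$ and $\alpha \in \Phi_A(\xi)$: if $x \subset \alpha$, the point $x$ itself works, so assume $x \not\subset \alpha$. I would choose a sequence $(R_m)$ in $\Res(X)$ converging to $\xi$ and set $R'_m := \rho_{A,x}(R_m)$; by Lemma~\ref{lem:retraction:3} the sequence $(R'_m)$ converges to $\xi|_A$ in $\Cr(A)$, whence $R'_m \subset \alpha$ for all $m$ sufficiently large. The key remaining step is to show that $R'_m \in Q(x,\xi)$ for $m$ sufficiently large, which combined with $R'_m \subset \alpha$ produces the required residue in $Q(x,\xi) \cap \alpha$.

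To establish this last claim I would use the characterization $Q(x,\xi) = \bigcap_{\beta \in \Phi_A(x) \cap \Phi_A(\xi)} \beta$ and verify that $R'_m \subset \beta$ for every such $\beta$, uniformly in $m$. The main obstacle is precisely this uniformity: for each individual $\beta \in \Phi_A(\xi)$ one knows $R'_m \subset \beta$ for $m$ past some threshold $N_\beta$, but a priori these thresholds could grow without bound as $\beta$ ranges over the (generally infinite) family $\Phi_A(x) \cap \Phi_A(\xi)$. I would overcome this by exploiting the finite-rank structure of $A$: the roots in $\Phi_A(x) \cap \Phi_A(\xi)$ are nested within each parallel class of walls, so the intersection $\bigcap_\beta \beta$ is already realised by a finite subfamily of minimal representatives (one per class), and over this finite subfamily a uniform threshold $M$ exists. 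For $m \geq M$ the residue $R'_m$ then lies in every minimal root, hence in every root of the full family by nestedness, placing $R'_m$ inside $Q(x,\xi)$ and finishing the proof.
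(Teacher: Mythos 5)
Your reduction to the case $n=1$ with a chamber base point matches the paper's opening moves exactly (Remark~\ref{rem:sectors}, Proposition~\ref{prop:sector:roots}, Corollary~\ref{cor:SectorFace}), and you have correctly isolated the real difficulty: the uniformity of the threshold over the infinite family $\Phi_A(x)\cap\Phi_A(\xi)$. But your resolution of that difficulty is wrong. The claim that the roots of $\Phi_A(x)\cap\Phi_A(\xi)$ are ``nested within each parallel class of walls'' so that $\bigcap_\beta\beta$ is realised by finitely many minimal representatives is a feature of \emph{affine} Coxeter complexes only; it fails as soon as $W$ is, say, Fuchsian. In a hyperbolic Coxeter complex the walls bounding roots of $\Phi_A(x)\cap\Phi_A(\xi)$ do not fall into finitely many nested families, and no finite subfamily realises the intersection (already in a tree, $Q(x,\xi)$ is a ray, the intersection of infinitely many half-trees, though there a single minimal root happens to suffice; in the Fuchsian case even that fails). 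Worse, the intermediate statement you are trying to prove -- that $R'_m\in Q(x,\xi)$ for all large $m$ -- is actually false in general: in a Fuchsian apartment one can choose a sequence of chambers whose centres converge to the visual point underlying $\xi$ while staying at distance $\geq 10$ from the ray $[x,\eta)$; such a sequence is eventually on the correct side of every \emph{fixed} wall, hence converges to $\xi$ in $\Cr(A)$, yet at every step it is separated from $[x,\eta)$ by some wall not crossing that ray, i.e.\ it never enters $Q(x,\xi)$. So the lemma cannot be obtained by showing that an arbitrary converging sequence eventually lies in the sector.

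This is precisely why the paper's proof is not a soft convergence argument but an induction on $\dim(X)$. It takes the visual limit $\eta\in\bd|A|$ of the sequence, forms the set $\mathcal H_\eta$ of walls having $\eta$ in their boundary and the Coxeter group $W_\eta$ they generate (Deodhar), observes that the associated thin building $A_\eta$ has strictly smaller dimension, and applies the induction hypothesis there to produce a residue $R$ lying in $\alpha$ and in every $\beta\in\Phi_A(x)\cap\Phi_A(\xi)$ with $\partial\beta\in\mathcal H_\eta$. The walls \emph{not} in $\mathcal H_\eta$ are then handled by a CAT(0) argument: if the ray $[p,\eta)$ from a point of $|R|$ avoided $|Q(x,\xi)|$, infinitely many walls would separate points of $[p,\eta)$ from $[x,\eta)$, contradicting the finite Hausdorff distance between these two asymptotic rays. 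You would need to replace your final paragraph with an argument of this kind (or some other mechanism valid beyond the affine case) for the proof to go through.
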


\begin{proof}
In view of Corollary~\ref{cor:SectorFace}, it is enough to deal with the case $x$ is a chamber. Thus we assume
henceforth that $x \in \ch(X)$.

If the result is true with $n=1$, then it is true for any $n$ by a straightforward induction argument using
Remark~\ref{rem:sectors}.

We need to show that the sector $Q(x, \xi)$ penetrates any $\alpha \in \Phi_A(\xi)$.
%Since this statement is purely about the geometry of the apartment $A$, there is no loss of generality in assuming that $X$ is thin, \emph{i.e.} $X=A$.
We work by induction on the dimension of Davis' \cat realisation $|X|$ of $X$ (see \cite{Dav98}).  Recall that this dimension equals the maximal possible rank of a spherical residue of $X$. We call it the \textbf{dimension of $X$} for short.

Pick a sequence $(R_n)$ of spherical residues of $A$ converging to $\xi$ in $\Cr(A)$; such a sequence exists in view of Lemma~\ref{lem:retraction:3}. In order to simplify the notation, choose $(R_n)$ in such a way that $R_0 = x$. Since the desired result clearly holds if $\xi$ is an interior point, namely $\xi \in \Res(X)$, we shall assume that  $(R_n)$ goes to infinity.

Let $p_n \in |R_n|$. Upon extracting, the sequence $(p_n)$ converges to some point $\eta$ of the visual boundary $\bd |A| \subset \bd |X|$.  Let $\mathcal H_\eta$ denote the set of all walls $H$ of $A$ such that $\eta \in \partial |H|$. Equivalently some geodesic ray of $|A|$ pointing to $\eta$ is contained in a tubular neighbourhood of $|H|$. Let also $W$ denote the Weyl group of $X$ (which acts on $|A|$ by isometries),  and $W_\eta$ denote the subgroup generated by the reflections associated with the elements of $\mathcal{H}_\eta$. Recall from \cite{Deo} that  $W_\eta$ is a Coxeter group.

\smallskip
Let now $\alpha \in \Phi_A(\xi)$. If $x \subset \alpha$, then $Q(x, \xi) \subset \alpha$ by Proposition~\ref{prop:sector:roots} and we are done. We assume henceforth that $x$ is not contained in $\alpha$.

Recall that $x= R_0$. In particular $p_0 \in |x|$. Therefore, Proposition~\ref{prop:sector:roots} implies that  the geodesic ray $[p_0, \eta)$ is entirely contained in $|Q(x, \xi)|$. In particular, if this ray penetrates $|\alpha|$, then we are done. We assume henceforth that this is not the case. Since $p_n \in |\alpha|$ for any large $n$, this implies that the wall $\partial \alpha$ belongs to $\mathcal H_\eta$.

\medskip
We claim that there is some $R \in \Res(A)$ such that $R \subset \alpha$ and $R \subset \beta$ for all $\beta \in \Phi_A(x) \cap \Phi_A(\xi)$ such that $\partial \beta \in \mathcal H_\eta$. The proof of this claim requires to use the induction hypothesis for the thin building $A_\eta$ associated to the Coxeter group $W_\eta$. The walls and the roots of $A_\eta$ may be canonically and $W_\eta$-equivariantly identified with the elements of $\mathcal H_\eta$. This yields a well defined $W_\eta$-equivariant surjective map $\pi_\eta: \Res(A) \to \Res(A_\eta)$ which maps a residue $\sigma$ to the unique spherical residue which is contained in all roots $\phi$ containing $\sigma$ and such that $\partial \phi \in \mathcal H_\eta$. The way $\pi_\eta$ acts on $\ch(A)$ is quite clear: it identifies chambers which are not separated by any wall in $\mathcal H_\eta$.

We now verify that $\dim(A_\eta) < \dim(A)=\dim(X)$. Indeed, a spherical residue in a Davis complex is minimal (\emph{i.e.} does not contain properly any spherical residue) if and only if it coincides with  the intersection of all walls containing it. Now, given a spherical residue of maximal possible rank $\sigma$ in $A_\eta$, then on the one hand the intersection in $|A_\eta|$ of all the walls in $\mathcal H_\eta$ containing $|\sigma|$ coincides with $|\sigma|$, but on the other hand  the intersection of these same walls in $|A|$ is not compact since it contains a geodesic ray pointing to $\eta$. This shows that $\dim(A) > \dim(A_\eta)$ as desired.

We are now in a position to apply the induction hypothesis in $A_\eta$. Notice that, upon extracting,  the sequence $(\pi_\eta(R_n))$ converges in $\Cr(A_\eta)$ to a point which we denote by $\pi_\eta(\xi)$. Furthermore, we have
$$\Phi_{A_\eta}(\pi_\eta(\xi)) = \{ \beta \in \Phi_A(\xi) \; |\; \partial \beta \in \mathcal H_\eta\}.$$
By induction there is some $R' \in \Res(A_\eta)$ contained in both $\alpha$ and $Q(\pi_\eta(x), \pi_\eta(\xi))$. Let $R \in \Res(A)$ be any element such that $\pi_\eta(R) = R'$. In view of Proposition~\ref{prop:sector:roots}, we have $R \subset \alpha$ and $R \subset \beta$ for all $\beta \in \Phi_A(x) \cap \Phi_A(\xi)$ such that $\partial \beta \in \mathcal H_\eta$, which confirms the above claim.

\medskip
Pick now $p \in |R|$ any point supported by $R$ and consider the geodesic ray $\rho$ joining $p$ to $ \eta$. We shall prove that this ray penetrates $|Q(x, \xi)|$, from which the desired conclusion follows. Let $q_n = \rho(n)$ for all $n \geq 0$.

Suppose for  a contradiction that for all $n$, we have $q_n \not \in  |Q(x, \xi)|$. Then, in view of Proposition~\ref{prop:sector:roots} there exists a root $\alpha_n \in  \Phi_A(x) \cap \Phi_A(\xi)$ which does not contain $q_n$.

We claim that none of the $\partial \alpha_n$'s separate the ray $[x, \eta)$ from $[p, \eta)$. Indeed, if $\partial \alpha_n$ did, then it would belong to $\mathcal H_\eta$, which contradicts the definition of $R$.

Since $[x, \eta) \subset |Q(x, \xi)| \subset |\alpha_n|$ for any $n$, it follows in particular that for each $n$ there is some $n'$ such that $q_{n'} $ is contained in $\alpha_n$. Upon extracting, we may assume that either $n'>n$ or $n'<n$ for all $n$. In either case, it follows that the set  $\{\alpha_n\}$ is infinite and that for any $k$, the intersection $\bigcap_{n \leq k} -\alpha_k$ contains some point of $q'_k \in [p, \eta)$. In particular, when $k$ tends to infinity, the number of walls separating $q'_k$ from $[x, \eta)$ tends to infinity, which contradicts the fact that $[x, \eta)$ and $[p, \eta)$ are at finite Hausdorff distance from one another.
\end{proof}

\subsection{Incidence properties of sectors}

The goal of this section is to establish that two sectors pointing towards the same point at infinity have a non-empty intersection. This should be compared to the corresponding statement in the classical case of Euclidean buildings, see~\cite[2.9.1]{BT}.

\begin{proposition}\label{mmapp2}
Let $\xi$ be any point in $\Cr(X)$. Given any two residues $x, y \in \Res(X)$, there exists  $z\in \Res(X)$ such
that  $Q(z,\xi) \subset Q(x,\xi)\cap Q(y,\xi)$.
\end{proposition}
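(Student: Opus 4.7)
The plan is to proceed by induction on the root-distance $d(x,y)$, with a thin-case calculation in a single apartment serving as the key combinatorial input. By Remark~\ref{rem:sectors}, it suffices to find some $z \in Q(x,\xi) \cap Q(y,\xi)$, since then $Q(z,\xi) \subset Q(x,\xi) \cap Q(y,\xi)$ follows at once.

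I first isolate the following thin case. Suppose $x', y' \in \Res(X)$ both lie in an apartment $B$ that contains both $Q(x',\xi)$ and $Q(y',\xi)$. Applying Proposition~\ref{prop:sector:roots} to every chamber $C \in \ch(B)$ with $x' \subset C$ and combining with Corollary~\ref{cor:SectorFace} extends the root-description of sectors to arbitrary residues and gives $Q(x',\xi) = \bigcap \{\alpha \in \Phi_B(\xi) : x' \subset \alpha\}$, and similarly for $y'$. Therefore
\[
Q(x',\xi) \cap Q(y',\xi) \;=\; Q(x',\xi) \;\cap\; \bigcap_{\alpha \in F} \alpha,
\]
where $F := \{\alpha \in \Phi_B(\xi) : y' \subset \alpha,\ x' \not\subset \alpha\}$ is a subset of $\Phi_B(y') \setminus \Phi_B(x')$. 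The cardinality of the latter is at most $2 d(x',y')$ by the very definition of the root-distance, so $F$ is finite, and Lemma~\ref{suitapp} yields a residue in this intersection.

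For the induction itself, the base case $d(x,y) = 0$ is trivial. When $x \subsetneq y$ (respectively $y \subsetneq x$), one has $y \in \St(x)$, so Proposition~\ref{Qxi}(ii) applied to $R = x$ gives $\proj_y(\xi) \in Q(x,\xi)$; since $y \subset \proj_y(\xi)$ and $Q(x,\xi)$ is closed in the sense of Section~\ref{sec:root-distance}, this forces $y \in Q(x,\xi)$, whence $Q(y,\xi) \subset Q(x,\xi)$ by Remark~\ref{rem:sectors} and $z = y$ does the job.

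In the final case, $d(x,y) > 0$ and neither $x$ nor $y$ contains the other, so Lemma~\ref{lem:interval} guarantees that the open interval $(x,y) := [x,y] \setminus \{x,y\}$ contains some residue $R$, with $0 < d(x,R),\ d(R,y) < d(x,y)$. The induction hypothesis supplies $z_1, z_2$ such that $Q(z_1,\xi) \subset Q(x,\xi) \cap Q(R,\xi)$ and $Q(z_2,\xi) \subset Q(R,\xi) \cap Q(y,\xi)$. Each $z_i$ lies in $Q(z_i,\xi) \subset Q(R,\xi)$, hence both residues and both of their sectors $Q(z_1,\xi), Q(z_2,\xi)$ sit inside any apartment $B$ containing $Q(R,\xi)$. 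Applying the thin case to $z_1, z_2$ in $B$ then produces $z$ with $Q(z,\xi) \subset Q(z_1,\xi) \cap Q(z_2,\xi) \subset Q(x,\xi) \cap Q(y,\xi)$, closing the induction. The main obstacle is the thin case, which depends crucially on Lemma~\ref{suitapp} combined with the root-description of sectors; the induction itself is then a clean consequence of Lemma~\ref{lem:interval} and the closedness of sectors encoded in Proposition~\ref{Qxi}(ii).
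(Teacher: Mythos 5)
Your proof is correct and follows essentially the same approach as the paper: induction on the root-distance $d(x,y)$, using Lemma~\ref{lem:interval} to find an intermediate residue and Lemma~\ref{suitapp} (applied to the finitely many roots of $\Phi_A(\xi)$ containing one sector but not the other) as the key input. The only cosmetic difference is that you package this root-counting step as a separate ``thin case,'' and you spell out the inclusion base cases a bit more explicitly than the paper does.
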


\begin{proof}
In view of Remark~\ref{rem:sectors}, it suffices to prove that the intersection  $Q(x,\xi)\cap Q(y,\xi)$
contains some spherical residue $z$. We proceed by induction on the root-distance $d(x,y)$.

If $x \subset y$ or $y \subset x$, the result is clear. Thus there is no loss of generality in assuming that the
open interval $]x, y[$ is non-empty, see Lemma~\ref{lem:interval}. Let $z \in \; ]x, y[$. By induction there
exists $a \in Q(x, \xi) \cap Q(z, \xi)$ and $b \in Q(y, \xi) \cap Q(z, \xi)$. Therefore, it suffices to show
that $Q(a, \xi) \cap Q(b, \xi)$ is non-empty. Since the sectors $Q(a, \xi)$ and $Q(b, \xi)$ are both contained
in $Q(z, \xi)$, it follows in particular that they are contained in a common apartment, say $A$. Let $\alpha_1,
\dots, \alpha_k$ be the finitely many elements of $\Phi_A(a) \cap \Phi_A(\xi) \setminus \Phi_A(b)$. By
Lemma~\ref{suitapp}, there is some spherical residue $R$ contained in $Q(b, \xi)$ as well as in each $\alpha_i$.

We claim that $R \in Q(a, \xi)$. If this were not the case, there would exist some $\alpha \in \Phi_A(a) \cap
\Phi_A(\xi)$ not containing $R$ in view of Proposition~\ref{prop:sector:roots}. The same proposition shows that
if $b \subset \alpha$, then $Q(b, \xi) \subset \alpha$ which is absurd since $R \subset Q(b, \xi)$. Therefore we
have  $b \not \subset \alpha$ or equivalently $\alpha \not \in \Phi_A(b)$. Thus $\alpha $ coincides with one of
the $\alpha_i$'s, and yields again a contradiction since $R  \subset \bigcap_{i=1}^k \alpha_i$. This confirms
the claim, thereby concluding the proof of the proposition.
\end{proof}

\subsection{Covering the combinatorial compactifications with apartments}\label{sec:sectors}

We are now able to prove Proposition~\ref{prop:phi}. In fact we shall establish the following more precise
version.

\begin{proposition}\label{prop:phi:bis}
Given $\xi \in \Cr(X)$,  we have the following.
\begin{itemize}
\item[(i)] There exists a sequence of spherical residues $(x_{0}, x_{1}, \dots)$ which penetrates and eventually remains in every sector
pointing to $\xi$, and such that $x_{n} = \proj_{x_{n}}(\xi)$ for all $n$.

\item[(ii)] Every such sequence converges to $\xi$.
\end{itemize}
\end{proposition}

\begin{proof}[Proof of Propositions~\ref{prop:phi} and~\ref{prop:phi:bis}]
In view of Proposition~\ref{prop:CcToCr} and the fact that a sequence as in (i) eventually remains in one
apartment, it suffices to prove Proposition~\ref{prop:phi:bis}.

\medskip \noindent
(i) Let $Q$ be some sector pointing to $\xi$ and $A$ be an apartment containing $Q$. Since $A$ is locally finite
and since any finite intersection of sectors pointing to $\xi$ is non-empty by Proposition~\ref{mmapp2}, it
follows that $Q$ contains a sequence $(x_n)$  of spherical residues which penetrates and eventually remains in
every sector \emph{contained in A} and pointing to $\xi$. Furthermore, upon replacing $x_{n}$ by $\proj_{x_{n}}(\xi)$ for all $n > 0$, we may and shall assume without loss of generality that $(x_{n})$ is eventually meets the \emph{interior} of every root $\alpha\in \Phi_{A}(\xi)$ such that $-\alpha \not \in \Phi_{A}(\xi)$. Notice that $\proj_{x_{n}}(\proj_{x_{n}}(\xi) )= \proj_{x_{n}}(\xi)$ in view of Corollary~\ref{cor:proj}.

If $Q'$ is any other sector pointing to $\xi$, then $Q
\cap Q'$ contains some sector by Proposition~\ref{mmapp2}. Therefore $(R_n)$ eventually penetrates and remains
in $Q'$ as desired.

\medskip \noindent
(ii) Let $(R_n)$ be a sequence of spherical residues which eventually penetrates and remains in every sector
pointing to $\xi$, and such that  $R_{n} = \proj_{R_{n}}(\xi)$ for all $n$.
Then the assumption on $(R_n)$ ensures that the sequence $(R_n)$ eventually remains on one side of every wall of $A$ (see Proposition~\ref{prop:sector:roots}). In
particular $(R_n)$ converges to some $\xi' \in \Cr(A)$ by Lemma~\ref{remarque}. By construction we have $\Phi_A(\xi) \subset
\Phi_A(\xi')$. Furthermore, since the sequence  $(R_{n})$  eventually leaves every root $\alpha$ of $A$ such that $-\alpha \not \in \Phi_{A}(\xi)$, we obtain in fact $\Phi_A(\xi) = \Phi_A(\xi')$.  Thus $\xi = \xi'$ by Lemma~\ref{lem:retraction:3}. Therefore we have $\proj_R(\xi) =
\proj_R(\xi') = \proj_R(R_n)$ for any sufficiently large $n$.

Since $R \in \Res(X)$ is arbitrary, we have just established that $R_n$ converges to $\xi$ in $\Cr(X)$ as
desired.
\end{proof}

We have seen in Lemma~\ref{remarque} that a sequence $(R_n)$ contained in some apartment $A$ converges to $\xi
\in \Cr(A)$ if and only if it eventually remains on one side of every wall of $A$. By
Proposition~\ref{prop:sector:roots}, the latter is equivalent to the fact that $(R_n)$ eventually penetrates and
remains in every sector of $A$ pointing to $\xi$. As we have just seen in the above proof, this implies that
$(R_n)$ converges in $\Cr(X)$. Thus we have proven the following:

\begin{corollary}\label{cor:convergence:criterion}
Let $(R_n)$ be a sequence of spherical residues contained in some apartment $A$. If $(R_n)$ converges in
$\Cr(A)$, then it also converges in $\Cr(X)$. In particular, it always admit a subsequence which converges in
$\Cr(X)$.\qed
\end{corollary}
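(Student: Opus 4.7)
The plan for the first assertion is essentially a rereading of the argument in the proof of Proposition~\ref{prop:phi:bis}(ii). Suppose $(R_n)$ converges in $\Cr(A)$ to some $\xi'$. By Lemma~\ref{remarque} this is equivalent to $(R_n)$ eventually lying on one side of every wall of $A$, and by Proposition~\ref{prop:sector:roots}, this in turn is equivalent to $(R_n)$ eventually penetrating and remaining in every sector of $A$ associated to $\xi'$ via the root-data $\Phi_A(\xi')$.

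To produce a limit in $\Cr(X)$, I would build $\xi$ pointwise: for each $\sigma \in \Res(X)$, set $\xi(\sigma):=\lim_n \proj_\sigma(R_n)$, provided the limit exists. For $\sigma \in \Res(A)$ it does by assumption. For $\sigma \in \Res(X) \setminus \Res(A)$, I would fix an apartment $A'$ containing $\sigma$ and a chamber $C_0$ of $A$, and consider the retraction $\rho = \rho_{A',C_0}$. By Proposition~\ref{prop:RootDistance}(ii) and the fact that $\rho$ restricts to an isomorphism from a subcomplex of $A$ onto its image in $A'$, the retracted sequence $(\rho(R_n)) \subset \Res(A')$ eventually remains on one side of every wall of $A'$. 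Lemma~\ref{remarque} then yields convergence of $(\rho(R_n))$ in $\Cr(A')$, and a gallery argument identifying $\proj_\sigma(R_n)$ with $\proj_\sigma(\rho(R_n))$ gives the desired stabilisation.

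The pointwise limit $\xi$ so obtained is automatically in $\Cr(X)$, and $(R_n) \to \xi$ in $\Cr(X)$ by construction; Lemma~\ref{lem:retraction:3} ensures the uniqueness of such a $\xi$ via the identity $\Phi_A(\xi) = \Phi_A(\xi')$. For the \emph{in particular} clause, I would observe that in the thin apartment $A$ each star $\St(\sigma)$ of a spherical residue is finite, so $\Cr(A)$ embeds as a closed subset of the countable product $\prod_{\sigma \in \Res(A)} \St(\sigma)$ of finite discrete sets, and is hence compact metrisable, in particular sequentially compact. Any sequence in $\Res(A)$ therefore admits a subsequence converging in $\Cr(A)$, which by the first part converges in $\Cr(X)$ as well.

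The principal obstacle I foresee is the retraction-projection identity $\proj_\sigma = \proj_\sigma \circ \rho$ for $\sigma \subset A'$ and $\rho = \rho_{A',C_0}$, which must be verified via a minimal-gallery analysis from $C_0$; this is a delicate but essentially standard property of combinatorial projections in buildings, and it is precisely what enables the passage from thin convergence in $\Cr(A)$ to thick convergence in $\Cr(X)$.
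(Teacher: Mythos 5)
Your proof contains a genuine gap, and it sits exactly where you flag the ``principal obstacle'': the retraction-projection identity $\proj_\sigma = \proj_\sigma \circ \rho_{A',C_0}$ (for $\sigma \subset A'$, $C_0 \in \ch(A) \cap \ch(A')$) is false in general, and no gallery analysis can establish it. The simplest counterexample lives in a tree (type $D_\infty$). Take two apartments $A, A'$ that branch at a vertex $v$, a chamber $C_0 \in A \cap A'$ on the shared side of $v$, and a panel $\sigma \in A'$ on the divergent side at some fixed distance from $v$. Any apartment containing both $\sigma$ and $C_0$ necessarily contains the geodesic between them, hence passes through $v$. Now let $R_n \in \ch(A)$ run off to infinity on the divergent side of $v$. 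The geodesic from $\sigma$ to $R_n$ heads towards $v$, so $\proj_\sigma(R_n)$ is the chamber of $\St(\sigma)\cap A'$ on the $v$-side of $\sigma$; but $\rho_{A',C_0}(R_n)$ is a chamber of $A'$ beyond $\sigma$ on the opposite side, so $\proj_\sigma(\rho_{A',C_0}(R_n))$ is the other chamber of $\St(\sigma)\cap A'$. The two disagree as soon as $d(C_0,R_n) > d(C_0,\sigma)$. Thus the pointwise limit built from the retracted sequence does exist, but it is the wrong one, and your construction of $\xi$ at the residues $\sigma \notin \Res(A)$ is invalid.

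The correct route, the one the paper takes, goes through the sector machinery rather than a retraction onto a second apartment. Since $(R_n)\subset A$, for each $x \in \ch(A)$ the sets $\bigcap_{n\geq k}\Conv(x,R_n)$ lie entirely in $A$; consequently any subsequential limit $\xi$ of $(R_n)$ in $\Cr(X)$ has all of its sectors $Q(x,\xi)$ based at chambers of $A$ contained in $A$. Lemma~\ref{lem:retraction:3} then applies with $\rho_{A,x}$ acting trivially on $(R_n)$, forcing $\Phi_A(\xi) = \Phi_A(\xi')$ where $\xi'$ is your intrinsic limit, and this determines $\xi$ uniquely. This, together with an existence argument (compactness of $\Cr(X)$ in the locally finite case, or the explicit sequence constructed in Proposition~\ref{prop:phi:bis}), is precisely what the paper's one-line reduction to the proof of Proposition~\ref{prop:phi:bis}(ii) encodes. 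Your use of $\Phi_A$ to get uniqueness, and your compactness argument for $\Cr(A)$ (and hence the \emph{in particular} clause), are both correct; the defect is confined to the existence step for $\sigma \notin \Res(A)$.
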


\section{Horofunction compactifications}\label{sec:horofunction}

Let $Y$ be a \textbf{proper} metric space, \emph{i.e.} a metric space all of whose closed balls are compact. Given a base point $y_{0} \in Y$, we define $\mathscr F(Y, y_{0})$ as the space of $1$-Lipschitz maps $Y \to \R$ taking value $0$ at $y_{0}$. The topology of pointwise convergence (which coincides with the topology of uniform convergence since $Y$ is proper) turns $\mathscr F(Y, y_{0})$  into a compact space. To each $p \in Y$ we attach the functions
$$d_{p} : Y \to \R : y \mapsto d(p, y)$$
and
$$f_{p} : Y \to \R : y \mapsto d(p, y) - d(p, y_{0}).$$
Then $f_{p} $ belongs to $\mathscr F(Y, y_{0})$ and it is a matter of routine verifications to check that the map
$$Y \to \mathscr F(Y, y_{0}) : p \mapsto f_{p}$$
is continuous and injective. We shall implicitly identify $Y$ with its image. The closure of $Y$ in $\mathscr F(Y, y_{0})$ is called the \textbf{horofunction compactification} of $Y$. We denote it by $\Ch(Y)$.

Since $ \mathscr F(Y, y_{0})$ may be canonically identified with the quotient of the space of all $1$-Lipschitz functions by the $1$-dimensional subspace consisting of constant functions, it follows that the horofunction compactification is independent of the choice of the base point $y_{0}$.

%In case $Y$ is not proper, it still makes sense to define $\Ch(Y)$ in the same way as above. The possible non-compactness of $\mathscr F(Y, y_{0})$ suggests to call $\Ch(Y)$ the \textbf{horofunction bordification} of $Y$ in this general case.

It is well known that the horofunction compactification of a proper \cat space coincides with the visual compactification, see \cite[Theorem~II.8.13]{BH}. In the case of a locally finite building $X$, several proper metric spaces may be viewed as realisations of $X$: Davis' \cat realisation $|X|$ is one of them; the \textbf{chamber graph} (\emph{i.e.} the set of chambers endowed with the gallery distance) is another one; the set of spherical residues $\Res(X)$ endowed with the root-distance  is yet another. It should be expected that the respective horofunction compactifications of these metric spaces yield different spaces which may be viewed as compactifications of the building $X$. This is confirmed by the following.

\begin{theorem}\label{thm:horo}
The minimal combinatorial compactification of a locally finite building $X$ is $\Aut(X)$-equi\-variantly homeomorphic to the horofunction compactification of its chamber graph.

Similarly, the maximal combinatorial compactification of $X$ is $\Aut(X)$-equivariantly homeomorphic to the horofunction compactification of $\Res(X)$ endowed with the root-distance.
\end{theorem}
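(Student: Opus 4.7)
My plan is to produce, for each of the two statements, an $\Aut(X)$-equivariant continuous injection from the combinatorial compactification into the corresponding horofunction space; combined with compactness of $\Cr(X)$ (resp.\ $\Cc(X)$), the Hausdorff property of the target, and the density of $\Res(X)$ (resp.\ $\ch(X)$) in the horofunction compactification, this will automatically upgrade to a homeomorphism. The two cases are essentially the same computation, so I write the argument for the maximal compactification $\Cr(X)$ and the horofunction compactification of $(\Res(X), d)$; the chamber case is a transparent specialisation.

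Fix a base residue $y_0$. For each $\xi \in \Cr(X)$ I want to define
\[
\Psi(\xi)(y) \;=\; \lim_{n \to \infty}\bigl(d(R_n, y) - d(R_n, y_0)\bigr)
\]
for any sequence $(R_n) \to \xi$ in $\Cr(X)$. To prove the limit exists and depends only on $\xi$, I reduce to a single apartment. By Proposition~\ref{mmapp2} the sectors $Q(y,\xi)$ and $Q(y_0,\xi)$ share a common sub-sector $Q(z,\xi)$; standard building theory then supplies an apartment $A$ containing $y$, $y_0$ and a sub-sector of $Q(z,\xi)$. For a general sequence $(R_n) \to \xi$ one retracts to $A$ based at $y$ and at $y_0$: since $A$ contains a sector pointing to $\xi$, Lemmas~\ref{lem:retraction:2} and~\ref{lem:retraction:3} force the two retractions to agree on $R_n$ for all $n$ large enough and to converge to $\xi|_A$ in $\Cr(A)$, so both distances to the base points are preserved and the problem moves into the thin building $A$. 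Working in $A$, the identity $d(R, R') = \tfrac{1}{2}|\Phi_A(R) \triangle \Phi_A(R')|$, where $\Phi_A(R) := \{\alpha \text{ root of } A : R \subset \alpha\}$, yields
\[
d(R_n, y) - d(R_n, y_0) \;=\; \tfrac{1}{2}\sum_{\alpha \in \Phi_A(y)\triangle\Phi_A(y_0)} \varepsilon_\alpha(R_n),
\]
with $\varepsilon_\alpha(R_n) \in \{\pm 1\}$ recording on which side of the wall $\partial \alpha$ the residue $R_n$ lies. The index set is finite (it is controlled by the root-distance $d(y, y_0)$), and by Lemma~\ref{lem:roots} each sign $\varepsilon_\alpha(R_n)$ eventually stabilises to a value determined only by $\Phi_A(\xi)$; via Lemma~\ref{lem:retraction:3} this value in turn depends only on $\xi$. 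This simultaneously proves existence of the limit and its independence from $(R_n)$.

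For injectivity I invoke Corollary~\ref{cor:proj-interval}: for any $\sigma \in \Res(X)$ the projection $\proj_\sigma(\xi)$, which equals $\proj_\sigma(R_n)$ for $n$ large, is the unique element of $\St(\sigma)$ minimising $d(-, R_n)$. Since the root-distance takes values in $\tfrac{1}{2}\Z$, the gap to the next smallest value is bounded below by $\tfrac{1}{2}$ and this bound passes to the limit, so $\proj_\sigma(\xi)$ is the unique strict minimiser of $\Psi(\xi)$ on $\St(\sigma)$. Thus $\Psi(\xi) = \Psi(\xi')$ forces $\sigma \mapsto \proj_\sigma(\xi)$ and $\sigma \mapsto \proj_\sigma(\xi')$ to coincide, which by definition of $\Cr(X)$ means $\xi = \xi'$. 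Continuity is then a standard diagonal argument: for $\xi_m \to \xi$ in the compact metrisable $\Cr(X)$, choose $R_m \in \Res(X)$ simultaneously close enough to $\xi_m$ in $\Cr(X)$ (so that $R_m \to \xi$) and close enough in the horofunction topology (so that $f_{R_m}$ approximates $\Psi(\xi_m)$ on the first $m$ test residues); then $\Psi(R_m) = f_{R_m} \to \Psi(\xi)$ forces $\Psi(\xi_m) \to \Psi(\xi)$. Equivariance is manifest in the construction.

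The main obstacle is the reduction to a single apartment containing $y$, $y_0$ and a sector pointing to $\xi$, together with the compatibility of retractions across different base points needed to preserve both distances simultaneously. This is exactly what the combinatorial-sector theory developed in Section~\ref{sec:sectors} is engineered to do: Proposition~\ref{mmapp2} produces the common sub-sector, and Lemmas~\ref{lem:retraction:2} and~\ref{lem:retraction:3} allow one to replace an arbitrary sequence converging to $\xi$ by its retraction without changing the limit. Once this machinery is in place, the horofunction computation reduces to the stabilisation of a finite sum of $\pm 1$'s indexed by walls between $y$ and $y_0$, which is the combinatorial content of the theorem.
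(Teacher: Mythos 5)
Your overall architecture (define $\Psi(\xi)$ as the limit of the functions $f_{R_n}$, prove the limit is well defined by a root count, recover $\proj_\sigma(\xi)$ from $\Psi(\xi)$ via Corollary~\ref{cor:proj-interval} to get injectivity, and conclude by compactness and density) matches the paper's, and the injectivity, continuity and surjectivity parts are sound. The gap is in the well-definedness step, at the sentence ``standard building theory then supplies an apartment $A$ containing $y$, $y_0$ and a sub-sector of $Q(z,\xi)$.'' This is false in general, already for trees: let $\xi$ be an end, let $b \neq b_0$ be two vertices on a ray towards $\xi$, and let $y$ (resp.\ $y_0$) be a neighbour of $b$ (resp.\ $b_0$) not lying on that ray. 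Every apartment (line) containing both $y$ and $y_0$ contains the geodesic $y, b, \dots, b_0, y_0$ together with its two prolongations beyond $y$ and beyond $y_0$, neither of which tends to $\xi$; so no apartment contains $y$, $y_0$ and a subray towards $\xi$, whatever $z$ is. Consequently Lemma~\ref{lem:retraction:2} does not apply to such an $A$: in this example $\rho_{A,y}(R_n)$ and $\rho_{A,y_0}(R_n)$ land near opposite ends of the line $A$ for large $n$, the two retractions do not preserve both distances simultaneously, and your sign-sum formula computes the wrong limit (roughly $\pm d(y,y_0)$ instead of $d(y,y_0)-2$). Since this reduction is the heart of the proof, the gap is essential as written.

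The paper avoids the problem by never requiring an apartment to contain $y$, $y_0$ \emph{and} data at infinity: it proves stabilisation of $f_{R_n}(y)$ by induction on the root-distance $d(y,y_0)$, using Lemma~\ref{lem:interval} to produce an intermediate residue $z \in \,]y, y_0[$ and splitting $d_{R_n}(y) - d_{R_n}(y_0)$ through $z$. The base cases are $y \subset y_0$ and $y_0 \subset y$, where an apartment containing $y_0$ and $R_n$ (which always exists, being a matter of only two residues) automatically contains $y$, and the root count there shows that the limit depends only on $y$, $y_0$ and $\proj_y(\xi)$. If you replace your single-apartment reduction by this interval induction, the remainder of your argument goes through.
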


Abusing notation slightly, we shall denote by $\Ch(X)$ the horofunction compactification of  $(\Res(X), d)$, where $d$ denotes the root-distance. Since by definition the chamber graph embeds isometrically into $(\Res(X), d)$, it follows that $\Ch(\ch(X))$ is contained as a closed subset in $\Ch(X)$. This is confirmed by combining Proposition~\ref{prop:CcToCr} with Theorem~\ref{thm:horo}.

\begin{proof}[Proof of Theorem~\ref{thm:horo}]
In one sentence, the above theorem holds because the combinatorial bordifications are defined using combinatorial projections, and the latter notion may be defined purely in terms of the root-distance (see Corollary~\ref{cor:proj-interval}).  Here are more details.

We deal only with the maximal combinatorial compactification, the case of  the minimal one being similar but easier.

Let $(R_{n})$ and $(T_{n})$ be two sequences of spherical residues which converge to the same point $\xi \in \Cr(X)$. We claim that the sequences $(f_{R_{n}})$ and $(f_{T_{n}})$ both converge in $\Ch(X)$ and have the same limit.

Let $x \in \Res(X)$. We show by induction on the root-distance $d(x, y_{0})$ between $x$ and a base point $y_{0} \Res(X)$ that $f_{R_{n}}(x)$ and  $f_{T_{n}}(x)$ take the same value for all sufficiently large $n$. This implies the above claim.

Assume first that $x \subset y_{0}$. Let $A$ be an apartment containing $y_{0}$ and $R_{n}$. Consider the roots of $A$. Since $x \subset y_0$, we have
$$\begin{array}{rcl}
|\Phi_A(R_n, y_0)| - |\Phi_A(R_n, x) | &=& |\Phi_A(R_n, y_0) \setminus \Phi_A(R_n, x)| \\
&= &|\Phi_A(\proj_x(R_n), y_0)|,
\end{array}$$
where the last equality follows since every root containing $R_n \cup x$ also contains $(\proj_x(R_n)$, and conversely any root containing $\proj_x(R_n)$ but not $ y_0$ also contains $R_n \cup x$. By similar arguments, one obtains
$$|\Phi_A(x, R_n)| - |\Phi_A(y_0, R_n) | = |\Phi_A(x, \proj_x(R_n) \cup y_0)|,$$
where $\Phi_A(x, \proj_x(R_n) \cup y_0)$ denotes the set of all the roots of $A$ containing $x$ but neither $\proj_x(R_n)$ nor $y_0$. Remark that the projection $\proj_x(R_n)$ coincides with $\proj_x(\xi)$ for any sufficiently large $n$. This shows that
$$
\begin{array}{rcl}
f_{R_n}(x) & = & d_{R_n}(x) - d_{R_n}(y_0)\\
& = & \frac 1 2 ( |\Phi_A(R_n, x) | - |\Phi_A(R_n, y_0)| + |\Phi_A(x, R_n)| - |\Phi_A(y_0, R_n) |)
\end{array}$$
depends only on $x$, $y_0$ and $\proj_x(\xi)$ and for any large enough $n$. In particular this shows that the sequence $(f_{R_n})$ converges and its limit coincides with the limit of $(f_{T_n})$ as expected.

The same arguments apply to the case $x \supset y_0$.

Assume now that $x$ and $y_0$ are not contained in one another. Then the open interval $]x, y_0[$ is non-empty by Lemma~\ref{lem:interval}. Let $z$ be an element of this interval. By induction the sequences
$$n \mapsto  d_{R_n}(z) - d_{R_n}(y_0)$$
and
$$n \mapsto  d_{R_n}(x) - d_{R_n}(z)$$
both converge to some value which depends only on $\xi$. Since the sum of these sequences yields $(f_{R_n}(x))$, the desired result follows.

This provides a well defined $\Aut(X)$-equivariant map $\Cr(X) \to \Ch(X): \xi \mapsto f_\xi$. A straightforward modification of the above arguments also show that the latter map is continuous.

\medskip
Let now $(R_n)$ be a sequence of spherical residues such that
$(f_{R_n})$ converges to some $f \in \Ch(X)$. Given $x \in \Res(X)$,
the projection $\proj_x(R_n)$ coincides with the unique spherical
residue $\sigma$ containing $x$ and such that  $f_{R_n}(\sigma)$ is
minimal with respect to the latter property (see
Corollary~\ref{cor:proj-interval}). Since $X$ is locally finite, the
set $\St(x)$ is finite and we deduce from the above that
$\proj_x(R_n)$  takes a constant value, say $\xi_f(x)$, for all
sufficiently large $n$. Furthermore, if  $(T_n)$ were another
sequence  such that $(f_{T_n})$ converges to $f \in \Ch(X)$, then
the same arguments shows that $\proj_x(T_n)$ also converges to the
same $\xi_f(x)$. This shows that there is a well defined
$Aut(X)$-equivariant map $\Ch(X) \to \Cr(X) : f \mapsto \xi_f$ such
that $f_{\xi_f} = f$ and $\xi_{f_\xi} = \xi$ for all $f \in \Ch(X)$
and $\xi \in \Cr(X)$.

Thus $\Ch(X) $ and $ \Cr(X)$ are indeed $\Aut(X)$-equivariantly homeomorphic.
\end{proof}

\section{Group-theoretic compactifications}

%Let $G$ be a group acting on $X$. To each chamber of $X$, we can associate its stabilizer in $G$. Thus, we can see $\ch(X)$ as a subspace of the space of closed subgroups of $G$, which is compact. This allows us to define the group-theoretic compactification of $X$ as the closure of the set of stabilizers of chambers.

%After some brief reminders about the Chabauty topology and locally finite groups, we define

\subsection{The Chabauty topology}

Let $G$ be a locally compact metrizable topological group and $\mathcal S(G)$ denote the set of closed subgroups of $G$. The reader may consult~\cite{BBK2} for an exposition of several equivalent definitions of the  \textbf{Chabauty topology} on $\mathcal S(G)$; this topology is compact (Theorem 1 of \S5.2 in \emph{loc.~cit}), metrizable and preserved by the conjugation action of $G$. The next proposition provides a concrete way to handle convergence in this space and could be viewed as yet another definition of the Chabauty topology.

\begin{lemma}\label{lem:ChabautyConvergence}
Let $F_n\in\mathcal S(G)$ for $n\geq 1$. The sequence $(F_n)$ converges to $F\in\mathcal S(G)$ if and only if the two following conditions are satisfied:

\begin{enumerate}[(i)]
\item For every sequence $(x_n)$ such that $x_n\in F_n$, if there exists a subsequence $(x_{\varphi(n)})$
converging to $x\in G$, then $x\in F$. \item For every element $x\in F$, there exists a subsequence $(x_n)$
converging to $x$ and such that $x_n\in F_n$ for every $n\geq 1$.
\end{enumerate}
\end{lemma}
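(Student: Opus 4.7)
The plan is to work from the standard subbasic definition of the Chabauty topology, where a subbasis is given by the sets
$$V_K = \{H \in \mathcal S(G) : H \cap K = \emptyset\} \quad (K \subset G \text{ compact})$$
and
$$V^U = \{H \in \mathcal S(G) : H \cap U \neq \emptyset\} \quad (U \subset G \text{ open}).$$
With this, both implications reduce to testing convergence against these two types of subbasic open sets.

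For the forward direction, assume $F_n \to F$. To establish (i), suppose $x_{\varphi(n)} \to x$ with $x_{\varphi(n)} \in F_{\varphi(n)}$ and, for contradiction, that $x \notin F$. Since $F$ is closed and $G$ is locally compact, choose a compact neighbourhood $K$ of $x$ with $K \cap F = \emptyset$. Then $F \in V_K$, hence $F_n \in V_K$ for all large $n$, contradicting the fact that $x_{\varphi(n)} \in F_{\varphi(n)} \cap K$ for large $n$. For (ii), given $x \in F$, use metrizability of $G$ to pick a countable neighbourhood basis $(B_k)_{k \geq 1}$ of $x$ with $B_{k+1} \subset B_k$. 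Since $F \in V^{B_k}$ for every $k$, there exists $N_k$ (which we take strictly increasing) such that $F_n \cap B_k \neq \emptyset$ for all $n \geq N_k$. Define $x_n \in F_n$ by choosing $x_n \in F_n \cap B_k$ whenever $N_k \leq n < N_{k+1}$, and $x_n = e$ (which belongs to $F_n$ since $F_n$ is a subgroup) for $n < N_1$. A routine verification gives $x_n \to x$.

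For the reverse direction, assume (i) and (ii) hold; it suffices to show that every subbasic open set containing $F$ eventually contains $F_n$. If $F \in V_K$ but $F_n \notin V_K$ infinitely often, pick $x_n \in F_n \cap K$ along this subsequence; by compactness of $K$, extract a further subsequence $x_{\varphi(n)} \to x \in K$. By (i), $x \in F$, contradicting $F \cap K = \emptyset$. If $F \in V^U$, pick $x \in F \cap U$; by (ii) there exists a sequence $x_n \in F_n$ with $x_n \to x$. Since $U$ is open, $x_n \in U$ for all large $n$, whence $F_n \in V^U$ eventually.

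The only genuine subtlety is the diagonal construction in (ii), which relies on the metrizability hypothesis to obtain a countable neighbourhood basis at $x$; everything else is a direct translation between the subbasic description of the Chabauty topology and the sequential conditions (i)–(ii), using only that the $F_n$ are subgroups (to supply a default element $e \in F_n$) and that $G$ is locally compact (to find compact neighbourhoods of points outside a closed set).
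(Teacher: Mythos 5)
Your proof is correct. Note that the paper does not actually prove this lemma: its ``proof'' is the single line ``See [GuR, Lemma~2]'', so there is nothing internal to compare against; your argument is a self-contained verification from the standard subbasis $\{V_K\}\cup\{V^U\}$ of the Chabauty topology, which is exactly the kind of routine check the citation is standing in for. All the ingredients are used correctly: local compactness to separate $x\notin F$ from the closed set $F$ by a compact neighbourhood, metrizability (hence a countable decreasing basis of open neighbourhoods) for the diagonal construction in (ii), sequential compactness of $K$ in the reverse direction, and the fact that a sequence converges as soon as it is eventually in every \emph{subbasic} neighbourhood of the limit (since basic neighbourhoods are finite intersections of subbasic ones). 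The only point worth making explicit is in the $V_K$ case of the reverse direction: the elements $x_n\in F_n\cap K$ are chosen only along a subsequence, whereas condition (i) is phrased for a full sequence $(x_n)$ with $x_n\in F_n$ for all $n$; this is fixed by filling in the remaining indices with $e\in F_n$, exactly as you do in part (ii), and does not affect the argument.
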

\begin{proof}
See \cite[Lemma 2]{GuR}.
\end{proof}

\subsection{Locally finite groups}\label{locally finite}

Let $G$ be a topological group. The group $G$ is said \textbf{topologically locally finite} (or simply \textbf{locally finite} when there is no ambiguity) if  every finitely generated subgroup of $G$ is relatively compact. Zorn's lemma allows one to define the \textbf{locally finite radical} of $G$ (or \textbf{LF-radical}), denoted $\RLF(G)$,  as the unique subgroup of $G$, which is normal, topologically locally finite, and maximal for these properties. It may be shown that if $G$ is locally compact, then the closure of a locally finite subgroup is itself locally finite (see \cite[Lemma 2.1]{Cap}). In particular, in that case the LF-radical is a closed subgroup.

One also shows that if $G$ is locally compact, then $G$ is locally finite if and only if every compact subset of $G$ topologically generates a compact subgroup of $G$ (see~\cite[Lemma 2.3]{Cap}). In particular a locally compact topologically locally finite group is amenable.

\begin{example}
 Let $F$ be a non-archimedean local field, with absolute value $\vert\cdot\vert$ and ring of integers $\mathcal O_F$. In contrast with the archimedean case, the group $(F,+)$ is locally finite. Indeed, if $x_1,\dots,x_n$ are elements of $F$, then the subgroup they generate is included in the ball centered at the origin and of radius equal to the maximum of the absolute values of the $x_i$.

The group $F^\times$ is not locally finite: if $\vert x\vert\neq 1$ is different than one, then $x^n$ will leave every compact set as $n$ tends to $\pm\infty$. So $\RLF(F^\times)\subset \mathcal O_F^\times$, which is itself a compact group, and thus we have equality: $\RLF(F^\times)=\mathcal O_F^\times$.
\end{example}

\begin{example}\label{example:sl3-RLF}
 With the same notations as in the example above, let $P$ be the subgroup of $\SL_3(F)$ consisting of upper triangular matrices. The same argument as above proves that $\RLF(P)$ is included in the group $D$ of matrices of the form 
$$\left(\begin{matrix}
a\in\mathcal O_F^\times & \ast\in F &\ast\in F\\
0 & b\in \mathcal O_F^\times & \ast\in F\\
0 & 0 & (ab)^{-1}
 \end{matrix}\right).
$$
It turns out that $D$ itself is locally finite. Indeed, if $A_1,\dots,A_n$ are matrices in $D$, then a simple calculation shows that the absolute values of the elements of the upper diagonal elements in products and inverses of the $A_i$ are bounded. Then it follows that the upper right element is also of absolute value bounded. Hence $\RLF(P)=D$.

The group $D$ appears as an example of a limit group in \cite[\S6.2]{GuR}. Similar calculations also prove that the other limit groups which appear in \cite[\S6.2]{GuR}, such as the group of matrices of the form 
$$\left(\begin{matrix}
a&b & \ast\in F \\
c & d & \ast\in F\\
0 & 0 & (ad-bc)^{-1}
 \end{matrix}\right),
$$
with $\bigl(\begin{smallmatrix} a& b\\c & d\end{smallmatrix}\bigr)\in\GL_2(\mathcal O_F)$, are locally finite.
\end{example}

\subsection{Stabilisers of points at infinity}\label{stabilisateurs}
Let $X$ be a building and $G$ be a  locally compact group acting continuously by type-preserving
automorphisms on $X$ in such a way that the stabiliser of every spherical residue is compact. A special case in which the latter condition automatically holds is when the $G$-action on the \cat realisation $|X|$ is proper. In particular, this happens if $X$ is locally finite and $G$ is a closed subgroup of $\Aut(X)$.

\medskip
The goal of this section is to provide a description of the $G$-stabilisers of points in $\Cr(X)$.

\begin{lemma}\label{clfixe}
Let $x\in \Res(X)$ and $\xi\in \Cr(X)$. Then any element $g\in G$ fixing $x$ and $\xi$ fixes the sector $Q(x,\xi)$ pointwise.
\end{lemma}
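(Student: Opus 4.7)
The plan is to invoke the intrinsic characterisation of $Q(x,\xi)$ established in the proof of Proposition~\ref{Qxi}: the sector $Q(x,\xi)$ coincides with the \emph{$\xi$-convex hull} of $x$, i.e.\ the smallest collection $P$ of spherical residues satisfying (a) $x \in P$, (b) $P$ is closed (under simplicial faces), and (c) $\proj_\sigma(\xi) \in P$ for every $\sigma \in P$. I will verify that the set
\[
F := \{R \in \Res(X) : gR = R\}
\]
enjoys these three properties; minimality will then yield $Q(x,\xi) \subseteq F$, which is exactly the conclusion.

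That $x \in F$ is the hypothesis. Property (c) for $F$ is a formal consequence of the equivariance of the combinatorial projection: for $\sigma \in F$ one has
\[
g\,\proj_\sigma(\xi) = \proj_{g\sigma}(g\xi) = \proj_\sigma(\xi),
\]
since $g\sigma = \sigma$ and $g\xi = \xi$; hence $\proj_\sigma(\xi) \in F$.

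The only slightly delicate point is the closedness (b). Suppose $R_1 \subset R_2$ with $g R_2 = R_2$. In the chamber-system description this simplicial containment says that the chamber set of $R_2$ is contained in that of $R_1$, and that $R_1$ is the \emph{unique} residue of its type whose chamber set contains any prescribed chamber of $R_2$. Because $g$ is type-preserving and stabilises the chamber set of $R_2$ as a set, the residue $gR_1$ has the same type as $R_1$ and contains a chamber of $gR_2 = R_2$; the aforementioned uniqueness then forces $gR_1 = R_1$, so $R_1 \in F$.

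The main (mild) obstacle is thus to navigate the simplicial versus chamber-system dualism for residues cleanly in the closure step; once that is settled, the lemma is a direct consequence of the $G$-equivariance of $\proj$ together with the minimality characterisation of $Q(x,\xi)$.
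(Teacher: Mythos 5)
Your proof is correct, but it takes a genuinely different route from the paper's. The paper first notes that $g$ stabilises $Q(x,\xi)$ setwise, then post-composes $g$ with a retraction onto an apartment $A$ containing the sector so as to obtain a type-preserving automorphism $g_A$ of $A$; the two nested cases $y\subset x$ and $x\subset y$ are treated directly (the latter via Corollary~\ref{cor:proj}, since such a $y$ is a face of $\proj_x(\xi)$, which is $g$-fixed by equivariance), and the general case follows by induction on the root-distance $d(x,y)$ using Lemma~\ref{lem:interval}. You instead verify that the set $F$ of $g$-fixed spherical residues satisfies the three defining conditions of the $\xi$-convex hull of $x$ introduced in the proof of Proposition~\ref{Qxi}, and conclude by minimality. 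Your closure step is sound: $g$ is type-preserving by the standing hypotheses of the section in which the lemma appears, so $gR_1$ is the residue of the same type as $R_1$ containing a chamber of $gR_2=R_2$, whence $gR_1=R_1$; and condition (c) is indeed immediate from the equivariance of $\sigma\mapsto\proj_\sigma(\xi)$. Your argument buys brevity --- no retraction, no induction on the root-distance --- at the cost of relying on the minimality characterisation in exactly the form established inside the proof of Proposition~\ref{Qxi} (it is available: the family $\mathcal V$ there consists of all subsets of $\Res(X)$ satisfying the three conditions, and $Q(x,\xi)=\bigcap\mathcal V$, so any member of $\mathcal V$, in particular your $F$, contains $Q(x,\xi)$). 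The paper's argument, by contrast, only invokes the external statements Corollary~\ref{cor:proj} and Lemma~\ref{lem:interval}.
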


\begin{proof}
It is clear that $g$ stabilises $Q(x,\xi)$.  Let $A$ be an apartment containing $Q(x, \xi)$ and $\rho$ be a retraction onto $A$ centred at some chamber $C$ containing $x$. Let $g_A: A \to A$ denote the restriction $\rho \circ g$ to $A$. Thus $g_A$ is a type-preserving automorphism of $A$ and all we need to show is that it fixes  $Q(x,\xi)$ pointwise. Let $y \in Q(x, \xi)$. If $y \subset x$, then $y$ is fixed by $g_A$ since $g_A$ is type-preserving. If $x \subset y$, then $y$ is contained in $\proj_x(\xi)$ by Corollary~\ref{cor:proj} and is thus fixed by $g_A$. Now, in view of Lemma~\ref{lem:interval}, the desired assertion follows from a straightforward induction on the root-distance $d(x, y)$.
\end{proof}

Recall that an element of a topological group is called \textbf{periodic} if the cyclic subgroup it generates is relatively compact.

\begin{lemma}\label{per=lf}
Let $\xi\in \Cr(X)$ and $G_\xi$ be its stabilizer in $G$. We have the following.
\begin{enumerate}[(i)]
\item The set of periodic elements $g\in G_\xi$ coincides with $\RLF(G_\xi)$.
\item For any apartment $A$ containing a sequence of spherical residues converging to $\xi$, we have
$$
\RLF(G_\xi)=\bigcup_{x\in \Res(X)}\Fix(Q(x,\xi))=\bigcup_{x\in \Res(A)}\Fix(Q(x,\xi)).
$$
\end{enumerate}
\end{lemma}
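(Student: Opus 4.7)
The plan is to introduce the auxiliary set
$$L = \bigcup_{x \in \Res(X)} \Fix(Q(x,\xi))$$
and to prove simultaneously that $L$ coincides with the set of periodic elements of $G_\xi$, with the analogous union indexed by $\Res(A)$, and with $\RLF(G_\xi)$.

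First I would verify that $L$ is exactly the set of periodic elements of $G_\xi$. One inclusion is trivial: $\Fix(Q(x,\xi)) \subseteq G_x$, which is compact by hypothesis, so every element of $L$ generates a relatively compact cyclic subgroup. For the converse, let $g \in G_\xi$ be periodic and set $H = \overline{\langle g \rangle}$, a compact subgroup of $G_\xi$. Since the continuous action of $H$ on the complete \cat space $|X|$ has bounded orbits, the Bruhat--Tits fixed point theorem provides some $p \in |X|$ fixed by $H$; by type-preservation $H$ fixes the spherical residue $x$ supporting $p$. As $H$ also fixes $\xi$, Lemma~\ref{clfixe} yields $H \subseteq \Fix(Q(x, \xi))$, so $g \in L$.

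Next I would show that $L$ is a topologically locally finite normal subgroup of $G_\xi$. Given $g_1, \dots, g_n \in L$ with $g_i \in \Fix(Q(x_i, \xi))$, iterating Proposition~\ref{mmapp2} produces a residue $z$ with $Q(z, \xi) \subseteq \bigcap_i Q(x_i, \xi)$, and therefore $\langle g_1, \dots, g_n \rangle \subseteq \Fix(Q(z, \xi)) \subseteq G_z$ is compact. This simultaneously shows that $L$ is a subgroup of $G_\xi$ and that every finitely generated subgroup of $L$ is relatively compact. Normality follows from equivariance: for $h \in G_\xi$, the set $h \cdot Q(x, \xi)$ still satisfies the intrinsic characterisation of Proposition~\ref{Qxi}(ii) with base $hx$ and boundary point $\xi$, so it equals $Q(hx, \xi)$; consequently $hgh^{-1} \in \Fix(Q(hx, \xi)) \subseteq L$.

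Combining the two preceding steps gives $L \subseteq \RLF(G_\xi)$. The converse inclusion is immediate: every element of $\RLF(G_\xi)$ generates a relatively compact cyclic subgroup and is therefore periodic, hence belongs to $L$ by the first step. This establishes (i) together with the equality $\RLF(G_\xi) = \bigcup_{x \in \Res(X)} \Fix(Q(x, \xi))$. For the second equality of (ii), fix an apartment $A$ containing a sequence $(R_n)$ converging to $\xi$; by Proposition~\ref{Qxi} the sector $Q(x, \xi)$ for any $x \in \Res(A)$ can be computed using $(R_n)$ and is therefore contained in $A$. Given an arbitrary $y \in \Res(X)$ and $g \in \Fix(Q(y, \xi))$, pick any $x_0 \in \Res(A)$ and apply Proposition~\ref{mmapp2} to obtain a residue $z$ with $Q(z, \xi) \subseteq Q(y, \xi) \cap Q(x_0, \xi) \subseteq A$; then $z \in \Res(A)$ and $g \in \Fix(Q(z, \xi))$. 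The main obstacle I anticipate is organising the argument so that the filtering property of sectors (Proposition~\ref{mmapp2}) cleanly delivers both the subgroup structure of $L$ and its local finiteness in one stroke; everything else then follows by combining Lemma~\ref{clfixe} with the Bruhat--Tits fixed point theorem.
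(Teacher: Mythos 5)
Your proof is correct and follows essentially the same route as the paper's: Bruhat--Tits to extract a fixed residue from a periodic element, Lemma~\ref{clfixe} to upgrade to pointwise fixing of a sector, and the filtering property of Proposition~\ref{mmapp2} to get local finiteness and the restriction to $\Res(A)$. Your treatment is merely more explicit, naming the auxiliary set $L$ and spelling out the normality check that the paper leaves implicit in ``the desired conclusion follows.''
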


\begin{proof}(i)  Clearly, every element of $\RLF(G_\xi)$ is periodic. Conversely, let $g$ be a periodic element in $G_\xi$. Then $g$ fixes a point in $|X|$ by \cite[II.2.8]{BH}, and hence a spherical residue $x \in \Res(X)$. Now, given finitely many periodic elements $g_n$ and denoting by $x_n \in \Res(X)$ a $g_n$-fixed point, the group $\langle g_1, \dots, g_n \rangle$ fixes $\bigcap_{i=1}^n Q(x_i, \xi)$ pointwise by Lemma~\ref{clfixe}. In view of Proposition~\ref{mmapp2}, the latter intersection is non-empty. Thus $\langle g_1, \dots, g_n \rangle$ fixes a spherical residue and is thus contained in a compact subgroup of $G$. This shows in particular that the set of periodic elements forms a subgroup of $G$ which is locally finite. The desired conclusion follows.

\medskip \noindent
(ii) In view of Lemma~\ref{clfixe}, the equality  $\RLF(G_\xi) = \bigcup_{x\in \Res(X)}\Fix(Q(x,\xi))$ is a reformulation of (i). The inclusion $\bigcup_{x\in \Res(X)}\Fix(Q(x,\xi)) \supset \bigcup_{x\in \Res(A)}\Fix(Q(x,\xi))$ is immediate and the reverse inclusion follows from Proposition~\ref{mmapp2}.
\end{proof}

\begin{example}
 In the case of affine buildings, there are some points $\xi\in \Cr(X)$ such that the combinatorial sectors are usual sectors. In this case, the group $G_\xi$ and $\RLF(G_\xi)$ were already considered in \cite[\S4]{BT}, where they were denoted respectively $\mathfrak B$ and $\mathfrak B_0$.
\end{example}

Although we shall only need the following in the special case of sectors, it holds for arbitrary thin sub-complexes.
%The following statement is intended to apply to sectors, but is of general nature.

\begin{lemma}\label{fixateurQ}
Let $Y$ be a convex sub-complex of an apartment $A$ of $X$. Assume that $G$ acts strongly transitively on $X$. Then the pointwise stabiliser of $Y$ in $G$ is topologically generated by the pointwise stabilisers of those roots of $A$ which contain $Y$. Furthermore, this group acts transitively on the set of apartments containing $Y$.
\end{lemma}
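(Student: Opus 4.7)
The plan is to let $H$ denote the closure in $G$ of the subgroup generated by the root-group-style subgroups $U_\alpha := \Fix_G(\alpha)$ as $\alpha$ ranges over the roots of $A$ containing $Y$. Evidently $H \subseteq \Fix_G(Y)$, so it suffices to establish two facts: first, that $H$ acts transitively on the set $\mathcal{A}(Y)$ of apartments containing $Y$; and second, that this transitivity forces $H = \Fix_G(Y)$. I would proceed in this order, since the transitivity statement is clearly the more substantial of the two.

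For the transitivity, the key input is a classical consequence of strong transitivity: if two apartments $A_1, A_2$ of $X$ share a common chamber $C$, then their intersection $A_1 \cap A_2$ is convex in $A_1$, and the unique type-preserving isomorphism $A_1 \to A_2$ fixing $A_1 \cap A_2$ pointwise is realised by an element of $G$ that decomposes as a finite product of root-group elements $u_i \in U_{\alpha_i}$, each $\alpha_i$ being a root of $A_1$ containing $A_1 \cap A_2$. This is proved by induction on the number of chambers of $A_1$ outside $A_1 \cap A_2$, each inductive step ``folding'' one additional chamber in via a single element of some $U_\alpha$, and is essentially the content of the BN-pair formalism under strong transitivity. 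Applied to $A_1 = A$ and $A_2 = A'$ for any $A' \in \mathcal{A}(Y)$, this immediately produces the desired $h \in H$ with $hA = A'$ as soon as $A \cap A'$ contains a chamber, because every $\alpha_i$ then contains $A \cap A' \supseteq Y$. When $Y$ itself contains a chamber, the hypothesis is automatic. Otherwise, I would first reduce to that case by successively moving $A'$ across single roots $\alpha \supseteq Y$ via single elements of the corresponding $U_\alpha$, enlarging $A \cap A'$ at each step, until an apartment is reached that shares a chamber with $A$.

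With transitivity of $H$ on $\mathcal A(Y)$ established, pick any $g \in \Fix_G(Y)$. Then $gA \in \mathcal{A}(Y)$, so there exists $h \in H$ with $hA = gA$, and $f := h^{-1}g$ both fixes $Y$ pointwise and stabilises $A$ setwise. The induced action of $f$ on the Coxeter complex $A$ is a type-preserving automorphism fixing $Y$; if $Y$ contains a chamber, rigidity of Coxeter complexes forces this induced action to be trivial and hence $f \in \Fix_G(A) \subseteq U_\alpha \subseteq H$ for any single root $\alpha \supseteq Y$ (such an $\alpha$ exists unless $Y=A$, in which case both claims are vacuous). In general the induced automorphism lies in the finite subgroup of the reflection group of $A$ fixing $Y$, which is generated by reflections across walls through $Y$; since each such wall $\partial\alpha$ has the property that \emph{both} $\alpha$ and $-\alpha$ contain $Y$, a standard lifting argument in the rank-one subgroup $\langle U_\alpha, U_{-\alpha}\rangle$ writes $f$ as a product of elements of such $U_{\pm\alpha}$'s times an element of $\Fix_G(A)$, so again $f \in H$.

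The main obstacle is the reduction step within Step~1 in the case where $Y$ contains no chamber: the delicate point is to guarantee that the sequence of intermediate apartments continues to contain $Y$ at each stage, which relies on careful bookkeeping of which roots are crossed (only those containing $Y$ are permitted). The remainder of the argument is essentially routine modulo standard facts from the structure theory of buildings with strongly transitive automorphism group.
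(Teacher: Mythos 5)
Your overall strategy is the same as the paper's: set $H$ equal to the closed subgroup generated by the fixators $\Fix_G(\alpha)$ of the roots $\alpha$ of $A$ containing $Y$, prove that $H$ acts transitively on the apartments containing $Y$, and then reduce the inclusion $\Fix_G(Y)\subseteq H$ to type-preserving elements stabilising $A$, which are dealt with via the reflections through walls containing $Y$. Your second step (deducing $\Fix_G(Y)=H$ from transitivity) matches the paper's argument and is fine.

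The gap is in the transitivity step. You claim that for two apartments $A_1,A_2$ sharing a chamber, the isomorphism $A_1\to A_2$ fixing $A_1\cap A_2$ is realised by a \emph{finite} product of elements $u_i\in U_{\alpha_i}$, and you propose to prove this ``by induction on the number of chambers of $A_1$ outside $A_1\cap A_2$''. For a non-spherical building --- the only case where the lemma has content --- that number is infinite (already for a tree, $A_1\setminus A_2$ contains infinitely many chambers), so the induction is ill-founded, and the finite-product claim itself is unjustified: $A_1\cap A_2$ need not be the intersection of finitely many roots of $A_1$, so no finite sequence of foldings need carry $A_1$ onto $A_2$. This is exactly the point the paper's proof is built around: it constructs an \emph{infinite} sequence of partial foldings $h_m=g_m\cdots g_1$, each $g_n$ fixing a half-apartment containing the current convex hull $Y_{n-1}$ and enlarging the agreement with $A$, arranges that every chamber of $A'$ is eventually mapped into $A$, and then extracts a convergent subsequence of $(h_m)$ using that $H$ is compact (it is a closed subgroup of $\Fix_G(Y)$, which is compact because $Y$ contains a spherical residue and such residues have compact stabilisers). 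The limit lies in $H$ precisely because $H$ is \emph{topologically} generated, i.e.\ closed. Note that although you define $H$ as a closure, you never use that closure: every element you exhibit is a finite word in the $U_\alpha$'s. To repair the argument you must replace the finite induction by such a limiting argument (or otherwise justify finiteness, which fails in general).
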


\begin{proof}
As $Y$ is convex, it coincides with the intersection of roots in $A$ containing it. Let $H$ be the subgroup of $\Fix_G(Y)$ topologically generated by the pointwise stabilisers of such roots. We will first prove that $H$ is transitive on the set of apartments containing $Y$. Let $A'$ be such an apartment.

We shall repeatedly use the following fact which is easy to verify: \emph{since the $G$-action is strongly transitive, given two apartments $A_1, A_2$ which share a common half-apartment $\alpha$, there is an element $g \in G$ fixing $\alpha$ pointwise and mapping $A_1$ to $A_2$}.

This remark implies in particular that there is an element of $H$ which maps $A'$ to some apartment containing a chamber of $C$ of $A$ which meets $Y$. Therefore, it suffices to prove the desired assertion for the convex hull of $C \cup Y$. In other words, we may and shall assume that $Y$ contains some chamber $C_0$.

Let $C_1$ be a chamber of $A$ which meets $Y$ but is not contained in it. The above remark yields an element $g_1 \in H$ which maps $A' =: A'_0$ to some apartment $A'_1$ containing $C_1$. Proceeding inductively, one constructs sequences $(C_n)$,  $(A'_n)$ and $(g_n)$ such that:
\begin{itemize}
\item $C_n$ is a chamber of $A$ not contained in $Y_n := \Conv(Y \cup \{C_0, \dots, C_{n-1}\})$;

\item $A'_n$ is an apartment containing $Y_{n_1} \cup C_{n}$ and sharing a half-apartment with $A'_{n-1}$;

\item $g_n$ is an element of $H$ which maps $A'_{n-1}$ to $A'_n$.
\end{itemize}
Furthermore, these sequences are built in such a way that $A$ is covered by  $\bigcup_n Y_n.$ Thus for each  $C' \in \ch(A')$ there is some large $n$ such that $\rho_{A, C}(C') \subset Y_n$ and we deduce that $h_m(C')$ is contained in $A$ for all $m>n$, where  the sequence $(h_m)$ defined by $h_m = g_m \cdots g_1$. Since $H$ is compact, the sequence $(h_m)$ subconverges to some $h \in H$. Since the $G$-action is continuous, the above implies that $h$ maps $A'$ to $A$, as desired.

\medskip
It remains to show that $\Fix_G(Y)\subset H$. Let thus $g\in\Fix_G(Y)$ and set $A'=gA$. There exists some $h\in H$ such that $hA'=A$. Hence $hgA=A$ and since $hg$ fixes $Y$ pointwise, it is enough to show that the subgroup of $\Stab_G(A)$ which fixes $Y$ pointwise is contained in $H$. The latter subgroup is trivial if $Y$ contains a chamber. Otherwise it is generated by all the reflections of $\Stab_G(A)$ fixing $Y$. It is well known and easy to see how to express such a reflection as a product of three elements, which each fixes pointwise a root of $A$. Thus there reflections indeed belong to $H$, as desired.
\end{proof}

Combining Lemmas~\ref{per=lf} and~\ref{fixateurQ}, one obtains a description of the locally finite radical $\RLF(G_\xi)$ in terms of root groups.

\subsection{Description of the group-theoretic compactification}\label{chabauty}

We now assume that the building $X$ thick and locally compact, \emph{i.e.} of finite thickness. In particular the automorphism group $\Aut(X)$ of $X$, endowed with the topology of pointwise convergence, is locally compact and metrisable. Let $G< \Aut(X)$ be a closed subgroup consisting of type-preserving automorphisms.

We assume that $G$ acts \textbf{strongly transitively} on $X$, \emph{i.e.} $G$ acts transitively on the set of ordered pairs $(C,A)$ where $C$ is a chamber and $A$ an apartment containing $C$. (Throughout it is implicitly understood that the only system of apartments we consider the  full system.) In particular, the group $G$ is endowed with a Tits system, or $BN$-pair,
see \cite[Ch.~V]{Bro}. A basic exposition of Tits systems may be found in \cite[IV,\S2]{BBK}.

\medskip
The group-theoretic compactification of $X$ is based on the following simple fact.

\begin{lemma}
The map $\varphi : \Res(X) \to \mathcal S(G) : R \mapsto G_R$ which associates a residue $R$ to its stabiliser $G_R$ is continuous, injective, $G$-equivariant and has discrete image. In particular it is a homeomorphism onto its image.
\end{lemma}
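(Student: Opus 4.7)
The plan is to verify each assertion in turn.

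$G$-equivariance is immediate from $\varphi(gR)=G_{gR}=gG_Rg^{-1}$, which matches the conjugation action of $G$ on $\mathcal{S}(G)$. Continuity of $\varphi$ is automatic since $\Res(X)$ carries the discrete topology.

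For injectivity, I would invoke the Tits system on $G$ provided by strong transitivity: fixing a chamber $C_{0}\subset A_{0}$, the pair $(G_{C_{0}},\Stab_{G}(A_{0}))$ is a $BN$-pair whose parabolic subgroups are precisely the stabilisers of spherical residues. Distinct residues then yield distinct parabolic subgroups by the standard Tits system theory (see \cite[IV.2]{BBK}).

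For the image being discrete---equivalently, for $\varphi^{-1}$ to be continuous---I would argue by contradiction: assume $G_{R_{n}}\to G_{R}$ in Chabauty with $R_{n}\neq R$ for all $n$. By compactness of $\Cr(X)$, pass to a subsequence with $R_{n}\to\xi\in\Cr(X)$. Lemma~\ref{lem:ChabautyConvergence}(ii) supplies, for each $g\in G_{R}$, a sequence $g_{n}\in G_{R_{n}}$ with $g_{n}\to g$; since the $G$-action on $\Cr(X)$ is jointly continuous (each coordinate in the product $\prod_{\sigma}\St(\sigma)$ being discrete, this reduces to eventual agreement on finite sets), $g\xi=\lim g_{n}R_{n}=\lim R_{n}=\xi$, so $G_{R}\subset G_{\xi}$. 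If $\xi\in\Res(X)$, Proposition~\ref{disccomb} forces $R_{n}=\xi$ eventually, whence $G_{R}=G_{\xi}$ and injectivity gives $R=\xi$, contradicting $R_{n}\neq R$. If $\xi\in\Cr(X)\setminus\Res(X)$, the elements of the compact group $G_{R}$ are periodic, so Lemma~\ref{per=lf}(i) yields $G_{R}\subset\RLF(G_{\xi})$; then Lemma~\ref{clfixe} forces every $g\in G_{R}$ to fix the infinite sector $Q(R,\xi)$ pointwise, so $G_{R}\subset G_{R'}$ for every residue $R'\subset Q(R,\xi)$. On the other hand, strong transitivity implies that the set $\{R'\in\Res(X):G_{R}\subset G_{R'}\}$ equals the finite set of faces of $R$: applying Lemma~\ref{clfixe} with the residue $R'$ treated as a point of $\Cr(X)$, any such $R'$ satisfies $G_{R}\subset\Fix(\Conv(R,R'))$, and the transitive action of $G_{R}$ on cofaces of $R$ of each prescribed type forces $\proj_{R}(R')=R$, whence $R'\subset R$. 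This contradicts the infiniteness of $Q(R,\xi)$.

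The main obstacle is the last step: establishing that $\{R'\in\Res(X):G_{R}\subset G_{R'}\}$ is finite. This requires using strong transitivity (together with the thickness of $X$) to rule out $G_R$ fixing any residue that is not a face of $R$, which in turn relies on the transitive action of $G_R$ on cofaces of $R$ of each given type.
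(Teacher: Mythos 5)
Your treatment of equivariance, continuity and injectivity is fine and matches the paper's (which disposes of injectivity with the same appeal to strong transitivity). For discreteness you take a genuinely different route: the paper argues directly, splitting into the case where $R_n\not\supset R$ infinitely often (producing elements $g_n\in G_{R_n}$ fixing a vertex of $R$ but not $R$, which subconverge to an element that cannot lie in $G_R$, contradicting Lemma~\ref{lem:ChabautyConvergence}) and the case where $R_n\supsetneq R$ infinitely often (local finiteness gives a constant subsequence $R'\supsetneq R$, so the limit would have to be both $G_R$ and $G_{R'}$). Your route instead invokes compactness of $\Cr(X)$, sectors, and Lemmas~\ref{clfixe} and~\ref{per=lf} --- machinery the paper only deploys later for Theorem~\ref{chab}. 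That is workable in principle, but the crucial step contains a gap.

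The gap sits exactly where you flagged ``the main obstacle'': in your proof that $\{R'\in\Res(X): G_R\subset G_{R'}\}$ consists only of faces of $R$, the deduction ``$\proj_R(R')=R$, whence $R'\subset R$'' is a non sequitur. Knowing $\proj_R(R')=R$ is far from implying that $R'$ is a face of $R$: if $R$ is a chamber, then $\proj_R(R')=R$ holds for \emph{every} $R'$, so transitivity of $G_R$ on the cofaces of $R$ yields no information, and yet one must still exclude $G_R\subset G_{R'}$ for $R'$ any other chamber. The repair is to argue from the chambers of $R$ rather than from its cofaces. By thickness and strong transitivity, the stabiliser $G_C$ of a chamber $C$ acts transitively on the chambers at any fixed nontrivial Weyl-distance from $C$, of which there are at least two; hence $G_C$ fixes no chamber other than $C$. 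Now if $G_R\subset G_{R'}$ and $C$ is any chamber admitting $R$ as a face, then $G_C\subset G_R\subset G_{R'}$ fixes $\proj_{R'}(C)$, forcing $\proj_{R'}(C)=C$; thus every chamber of $R$ is a chamber of $R'$, i.e.\ $R'$ is a face of $R$, and the set in question is finite as claimed. With that substitution your argument closes; still, the paper's direct two-case analysis is considerably shorter and keeps the lemma independent of the compactification machinery it is meant to help build.
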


\begin{proof}
Continuity is obvious since $\Res(X)$ is discrete. The fact that  $\varphi $ is equivariant is equally obvious. The injectivity follows since, by strong transitivity of the action, any two distinct residues have distinct stabilisers. It only remains to show that if some sequence $(R_n)$ of spherical residues is not asymptotically constant, then the sequence of stabilisers $G_{R_n}$ does not converge to some point of the image of $\varphi$. 

Let thus $(R_n)$ and $R$ be spherical residues  such that the sequence $(G_{R_n})$ converges to $G_R$. Suppose  for a contradiction that $R_n$ is not eventually constant.

Assume first that $R_n \not \supset R$ for infinitely many $n$. Then for each such $n$ there is an element $g_n \in G_{R_n}$ which fixes a vertex of $R$  but does not fix $R$. Clearly no subsequence of $(g_n)$ may converge to any element of $G_R$. On the other and since each $g_n$ fixes a vertex of $R$, it follows that the sequence $(g_n)$ is relatively compact and hence sub-converges in $g$.  In view of Lemma~\ref{lem:ChabautyConvergence}, this contradicts the fact that  $\lim_n G_{R_n} = G_R$.

Assume now that $R_n \supset R$ for all but finitely many $n$'s. Suppose for a contradiction that $R_n \supsetneq R$ for infinitely many $n$'s. Since $X$ is locally finite, this implies that there is a constant subsequence $R_{\psi(n)} = R'$ with $R' \supsetneq R$. Now the sequence $G_{R_{\psi(n)} }$ converges to both $G_R$ and $G_{R'}$, which implies the absurd equality $R=R'$. This finishes the proof.
\end{proof}

\begin{definition}
The closure of the image of $\varphi$ in $\mathcal S(G)$ is called the \textbf{group-theoretic compactification} of $X$. It is denoted by $\Cg(X)$.
\end{definition}

The main result of this section is the following.

\begin{theorem}\label{chab}
The group-theoretic compactification  $\Cg(X)$ is  $\Aut(X)$-equivariantly homeomorphic to the maximal
combinatorial compactification $\Cr(X)$. More precisely, a sequence $(R_n)$ of spherical residues converges to
some $\xi \in \Cr(X)$ if and only if the sequence of their stabilisers $(G_{R_n})$ converges to $\RLF(G_\xi)$ in
the Chabauty topology.
\end{theorem}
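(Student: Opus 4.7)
The plan is to exhibit a $G$-equivariant continuous injection $\Psi\colon \Cr(X) \to \mathcal S(G)$ by $\Psi(\xi) = \RLF(G_\xi)$ and show that $\Psi(\Cr(X)) = \Cg(X)$. The ``more precisely'' assertion of the theorem then follows: stabilisers of spherical residues are compact and hence locally finite, so $\RLF(G_R) = G_R$ for $R \in \Res(X)$, whence $\Psi$ extends the defining map $\varphi\colon R \mapsto G_R$ of $\Cg(X)$; compactness of $\Cr(X)$ and Hausdorffness of $\mathcal S(G)$ then promote this to a homeomorphism onto $\Cg(X)$.

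The crucial step is continuity of $\Psi$ on sequences: given $R_n \to \xi$ in $\Cr(X)$, I would verify $G_{R_n} \to \RLF(G_\xi)$ in the Chabauty topology through the criterion of Lemma~\ref{lem:ChabautyConvergence}. Condition (ii) of that lemma is straightforward: any $g \in \RLF(G_\xi)$ fixes some sector $Q(y,\xi)$ pointwise by Lemma~\ref{per=lf}(ii), and $R_n$ lies in $Q(y,\xi)$ for all sufficiently large $n$ by Proposition~\ref{prop:phi:bis}(i); hence $g \in G_{R_n}$ eventually and one may take $g_n = g$.

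The main obstacle is condition (i): any subsequential limit $g$ of a sequence $g_{n_k} \in G_{R_{n_k}}$ must lie in $\RLF(G_\xi)$. The inclusion $g \in G_\xi$ is a direct computation using the definition of the $G$-action on $\Cr(X)$, the fact that $g_{n_k}$ fixes $R_{n_k}$, and the eventual agreement of $g_{n_k}$ with $g$ on arbitrary fixed bounded sets of $\Res(X)$. By Lemma~\ref{per=lf}(i), it then remains to show $g$ is periodic. The crucial observation is that each $g_{n_k}$ is an isometry of $(\Res(X),d)$ fixing $R_{n_k}$, so for every fixed $y$ and all $k$ large enough that $g_{n_k} y = g y$, we obtain $d(y, R_{n_k}) = d(g y, R_{n_k})$. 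My plan to leverage this is: extract a subsequence so that all $R_n$ lie in a fixed apartment $A$ containing $Q(x_0,\xi)$ for some chamber $x_0$; invoke the filtering property of sectors (Proposition~\ref{mmapp2}) to produce a sector $Q(z,\xi) \subset Q(x_0,\xi) \cap Q(gx_0,\xi)$; then a thin-building analysis inside $A$, combining the isometric constraint $d(y, R_n) = d(gy, R_n)$ with Lemma~\ref{fixateurQ}'s description of sector fixators via root groups, should force $g$ to fix $Q(z,\xi)$ pointwise, whence $g \in \RLF(G_\xi)$ by Lemma~\ref{per=lf}(ii). This thin-case reduction is the most delicate step.

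Injectivity of $\Psi$ follows because $\xi$ can be recovered from $\RLF(G_\xi)$: the sectors $Q(y,\xi)$ appear as the fixed-residue sets of certain subgroups of $\RLF(G_\xi)$ via Lemma~\ref{fixateurQ}, and Corollary~\ref{cor:proj} then determines the projections $\proj_x(\xi)$, hence $\xi$ itself. For the reverse direction of the ``more precisely'' statement: given $G_{R_n} \to \RLF(G_\xi)$, extract a convergent subsequence $R_{n_k} \to \xi'$ in the compact space $\Cr(X)$; the forward direction yields $G_{R_{n_k}} \to \RLF(G_{\xi'})$, forcing $\RLF(G_{\xi'}) = \RLF(G_\xi)$ and hence $\xi' = \xi$ by injectivity of $\Psi$; since every subsequential limit of $R_n$ equals $\xi$, the whole sequence converges to $\xi$.
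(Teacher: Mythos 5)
Your overall strategy — defining $\Psi\colon \xi \mapsto \RLF(G_\xi)$, showing it is a continuous injection, and promoting it to a homeomorphism onto $\Cg(X)$ via compactness — is exactly the paper's, and the reverse direction of the ``more precisely'' statement (extract a subsequential limit $\xi'$, force $\xi'=\xi$ by injectivity) is also the paper's Lemma~\ref{combchab}. But the two central steps of your sketch do not go through as written.

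First, for condition~(ii) of Lemma~\ref{lem:ChabautyConvergence}: you assert that if $R_n \to \xi$ then $R_n$ eventually lies in $Q(y,\xi)$, citing Proposition~\ref{prop:phi:bis}(i). That proposition only asserts the \emph{existence} of \emph{some} sequence with this property; a given convergent sequence $(R_n)$ need not have it. Indeed $Q(y,\xi)$ is an intersection of infinitely many roots, and for a fixed root $\alpha$ the $R_n$ enter $\alpha$ only after some $N_\alpha$ depending on $\alpha$; nothing forces these thresholds to stay bounded. (Already in a tree one can build $R_n \to \xi$ with $R_n$ hanging off the ray $[y,\xi)$ at a vertex further and further out, so that $R_n \notin Q(y,\xi)$ for all $n$.) Consequently ``$g\in G_{R_n}$ eventually'' is simply false for a general $g \in \RLF(G_\xi)$. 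The paper sidesteps this by using strong transitivity: conjugate by $k_n \in G_x$ to push $R_n$ into a fixed apartment $A$ containing $Q(x,\xi)$, extract $k_n \to k$, observe that for each \emph{single} root $\alpha \in \Phi_A(k\xi)$ the conjugated residues $R'_n=k_n R_n$ do enter $\alpha$ eventually (this \emph{is} true, one root at a time), hence the root group $G_{(\alpha)}$ lands in the Chabauty limit; only then does Lemma~\ref{fixateurQ} produce the full pointwise stabiliser of the sector as the \emph{closure} of what the root groups generate. You need this root-by-root argument rather than a single-element one.

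Second, for condition~(i) (periodicity of a subsequential limit $g$): your plan to ``extract a subsequence so that all $R_n$ lie in a fixed apartment $A$'' is not available — the $R_n$ are arbitrary spherical residues and need not be confined to any apartment; and the subsequent ``thin-building analysis'' is left as a wish. Moreover the isometric constraint $d(y,R_n)=d(gy,R_n)$ alone cannot yield periodicity: a hyperbolic translation along a geodesic pointing to $\xi$ preserves the associated Busemann function. The paper instead uses a soft but essential observation that your sketch misses entirely: since $X$ is locally finite, $\Aut(X)$ is totally disconnected locally compact, every simplicial isometry is elliptic or hyperbolic, and the set of elliptic (equivalently, periodic) elements is \emph{closed} (this is also \cite[Theorem~2]{Wil} for general t.d.l.c.\ groups). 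Each $g_{n_k}\in G_{R_{n_k}}$ is periodic, hence so is any Chabauty limit; combined with Lemma~\ref{stab} and Lemma~\ref{per=lf}(i) this gives $D\subset\RLF(G_\xi)$ directly. Without this closedness-of-periodics input, your periodicity step is a genuine gap, not merely a ``delicate'' one.
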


It follows in particular that the closure of the image of the chamber-set $\ch(X)$ under $\varphi$ is
$\Aut(X)$-equivariantly homeomorphic to the minimal combinatorial compactification $\Cc(X)$ (see
Proposition~\ref{prop:CcToCr}).

\begin{example}
The group-theoretic compactification of Bruhat--Tits buildings was already studied in \cite{GuR}. In particular they explicitely calculate the stabilizers and limit groups. In the case of the building associated to $\SL_3$ over a local field, there is some point $\xi$ such that $G_\xi=P$ is the group of upper triangular matrices. The limit group is thus the group calculated in Example~\ref{example:sl3-RLF}.
\end{example}

The proof of Theorem~\ref{chab} requires some additional preparations, collected in the following intermediate
results.

\begin{lemma}\label{stab}
Let $(R_n)$ be a sequence of spherical residues converging to a  point $\xi \in \Cr(X)$ and such that
the sequence $(G_{R_n})$ converges to some closed group $D$ in $\Cg(X)$. Then $D$ fixes~$\xi$.
\end{lemma}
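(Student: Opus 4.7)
The plan is to show directly that every $g \in D$ fixes the function $\xi \in \Cr(X)$ under the action $(g \cdot f)(\sigma) = g f(g^{-1}\sigma)$. The tool bridging the two convergences (in $\Cg(X)$ and in $\Cr(X)$) will be Lemma~\ref{lem:ChabautyConvergence}, which yields, for any prescribed $g \in D$, a sequence $g_n \in G_{R_n}$ converging to $g$ in $G$. Since the topology on $G = \Aut(X)$ is that of pointwise convergence on the discrete set $\Res(X)$, convergence $g_n \to g$ means that for every fixed $\sigma \in \Res(X)$ we have $g_n \sigma = g \sigma$ for all sufficiently large $n$, and similarly $g_n \cdot y = g \cdot y$ for any fixed $y \in \Res(X)$ and $n$ large.

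First I would fix an arbitrary $\sigma \in \Res(X)$ and set $\tau = g^{-1}\sigma$. By definition of convergence in $\Cr(X)$, the residues $\proj_\tau(R_n)$ and $\proj_\sigma(R_n)$ stabilize to $\xi(\tau)$ and $\xi(\sigma)$, respectively, for all large $n$. Next I would use the crucial fact that $g_n$ fixes $R_n$: since combinatorial projections are equivariant under $\Aut(X)$, this gives
\[
g_n \cdot \proj_\tau(R_n) \;=\; \proj_{g_n \tau}(g_n R_n) \;=\; \proj_{g_n \tau}(R_n).
\]
For $n$ large enough, $g_n \tau = g\tau = \sigma$, so the right-hand side equals $\proj_\sigma(R_n) = \xi(\sigma)$.

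Finally I would replace $g_n$ by $g$ on the left-hand side: since $\proj_\tau(R_n)$ is the fixed residue $\xi(\tau)$ for $n$ large, and $g_n \to g$ pointwise, we have $g_n \cdot \xi(\tau) = g \cdot \xi(\tau)$ for $n$ sufficiently large. Combining the two identities yields $g \cdot \xi(\tau) = \xi(\sigma)$, i.e. $(g\xi)(\sigma) = \xi(\sigma)$. Since $\sigma$ was arbitrary, $g\xi = \xi$, and since $g \in D$ was arbitrary, $D$ fixes $\xi$.

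The argument is essentially a bookkeeping exercise chaining together three instances of ``for $n$ large enough'' (stability of $g_n\tau$, stability of $\proj_\tau(R_n)$, and stability of $g_n$ acting on a fixed residue); the main subtlety, if any, is not to confuse the eventual stabilization taking place in $G$ with the eventual stabilization taking place in $\Cr(X)$, and to apply each at the right moment for the single element $\sigma$ under consideration.
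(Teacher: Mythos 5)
Your proof is correct and follows essentially the same route as the paper's: both extract a sequence $g_n \in G_{R_n}$ converging to $g$ via Lemma~\ref{lem:ChabautyConvergence}, then chain together the equivariance of $\proj$, the identity $g_n R_n = R_n$, and the eventual stabilisation of $g_n$ and of $\proj_{g^{-1}\sigma}(R_n)$ on each fixed residue. The only difference is cosmetic (you track $g_n\tau$ where the paper tracks $g_n^{-1}\sigma$).
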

\begin{proof}
Given $g\in D$ and $g_n\in G_{R_n}$ be a sequence which converges to $g$ (see
Lemma~\ref{lem:ChabautyConvergence}).  Let $\sigma \in  \Res(X)$. Then we have $g_n^{-1}.\sigma=g^{-1}.\sigma$
for $n$ large enough. Likewise, for $n$ large enough, $g.(\xi(g^{-1}\sigma)=g_n.(\xi(g^{-1}\sigma))$. Therefore
we have $(g.\xi)(\sigma)=g_n\xi(g_n^{-1}\sigma)=(g_n.\xi)(\sigma)$ for large $n$. Now, taking $n$ so large that
$\xi(\sigma) = \proj_\sigma(R_n)$ and $\xi(g^{-1}\sigma) = \proj_{g^{-1}\sigma}(R_n)$, we obtain sucessively
$$\begin{array}{rcl}
(g.\xi)(\sigma) &=& g.(\xi(g^{-1} \sigma))\\
& = & g_n.(\xi(g_n^{-1} \sigma))\\
& = & g_n.(\proj_{g_n^{-1} \sigma}(R_n))\\
& = & g_n. (\proj_{g_n^{-1} \sigma}(g_n^{-1}R_n))\\
& = & g_n. (g_n^{-1} \proj_{\sigma}(R_n))\\
& = & \proj_\sigma(R_n)\\
& = & \xi(\sigma).
\end{array}$$
Thus $g.\xi =\xi$ as desired.
\end{proof}

\begin{lemma}\label{chabcomb}
Let $(R_n)$ be a sequence of spherical residues converging to $\xi \in \Cr(X)$. Then the sequence  $(G_{R_n})$
converges in $\Cg(X)$ and its limit coincides with $\RLF(G_\xi)$.
\end{lemma}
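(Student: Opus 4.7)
The proof will use Lemma~\ref{lem:ChabautyConvergence} together with the compactness of the Chabauty topology on $\mathcal S(G)$: it is enough to show that every subsequential limit $D = \lim_k G_{R_{n_k}}$ in $\mathcal S(G)$ coincides with $\RLF(G_\xi)$. I would then establish the two inclusions separately.

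The inclusion $\RLF(G_\xi) \subset D$ should be the easy direction. By Lemma~\ref{per=lf}(ii), any $g \in \RLF(G_\xi)$ lies in the pointwise fixator $\Fix(Q(x, \xi))$ for some spherical residue $x$; and by Proposition~\ref{prop:phi:bis}, $R_n \subset Q(x, \xi)$ for all sufficiently large $n$, so $g \in G_{R_n}$ eventually. The constant sequence $(g, g, \ldots)$ then verifies condition~(ii) of Lemma~\ref{lem:ChabautyConvergence}, whence $g \in D$.

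For the reverse inclusion $D \subset \RLF(G_\xi)$, I would fix $g \in D$ and write $g = \lim_k g_{n_k}$ with $g_{n_k} \in G_{R_{n_k}}$. Lemma~\ref{stab} gives $g \in G_\xi$, so by Lemma~\ref{per=lf}(i) it suffices to show $g$ is periodic. The key observation is that for every fixed $y \in \Res(X)$, the discreteness of $\Res(X)$ combined with continuity of the $G$-action yields $g.y = g_{n_k}.y$ for all $k$ large; since $g_{n_k}$ fixes $R_{n_k}$ and acts isometrically for the root-distance, one obtains
\[
d(R_{n_k}, g.y) = d(R_{n_k}, y) \qquad \text{for all sufficiently large } k.
\]
Via the isometry $\Cr(X) \simeq \Ch(X)$ from Theorem~\ref{thm:horo}, this translates to $f_\xi(g.y) = f_\xi(y)$ for every $y$: the element $g$ preserves the horofunction $f_\xi$ pointwise, which rules out any translational component of $g$ toward $\xi$.

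The main obstacle will be the concluding step: promoting ``$g$ preserves $f_\xi$'' to ``$g$ fixes pointwise some sector $Q(x, \xi)$''. In a general proper \cat space, horofunction preservation is not by itself enough to force periodicity (parabolic behaviour could occur), so I would genuinely exploit that $g$ arises as a Chabauty limit of \emph{stabilisers of a sequence tending to $\xi$}, not merely an arbitrary horofunction-preserving element. The natural avenue is to work inside an apartment $A$ containing a sector pointing to $\xi$ (using Lemma~\ref{lem:retraction:3} to reduce to the thin situation), invoke strong transitivity together with Lemma~\ref{fixateurQ} to bring root groups into play, and use the combinatorial description of projections $\proj_\sigma(\xi) = \lim_k \proj_\sigma(R_{n_k})$ from Lemma~\ref{proj} to show that $g$ must act trivially on every residue of some sector $Q(x, \xi)$. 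Once this is achieved, Lemma~\ref{per=lf}(ii) places $g$ in $\RLF(G_\xi)$, completing the proof.
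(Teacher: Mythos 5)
There are two genuine gaps, one in each inclusion. For $\RLF(G_\xi)\subset D$, your key step is the claim that $R_n\subset Q(x,\xi)$ for all large $n$, deduced from Proposition~\ref{prop:phi:bis}. But that proposition only asserts the \emph{existence} of one sequence converging to $\xi$ that eventually remains in every sector, plus the converse that any such sequence converges to $\xi$; it does not say that an \emph{arbitrary} sequence converging to $\xi$ eventually enters $Q(x,\xi)$, and this is false in general (in a tree, take $R_n$ adjacent to the $n$-th vertex of the ray $[x,\xi)$ but off the ray: it converges to $\xi$ yet never meets $Q(x,\xi)$, and an element fixing the ray pointwise need not fix $R_n$). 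So the constant sequence $(g,g,\dots)$ does not witness condition~(ii) of Lemma~\ref{lem:ChabautyConvergence}. The paper circumvents this by using strong transitivity to conjugate: choose $k_n\in G_x$ with $R'_n=k_nR_n\subset A$ where $A\supset Q(x,\xi)$, pass to $k=\lim k_n$, note that $(R'_n)$ \emph{does} eventually lie in each root $\alpha\in\Phi_A(k.\xi)$ (being a convergent sequence inside an apartment), deduce $G_{(\alpha)}<kDk^{-1}$ for each such root, and then invoke Lemma~\ref{fixateurQ} to generate all of $\Fix(Q(x,\xi))=\RLF(G_\xi)$ inside $D$. Some argument of this kind (producing non-constant approximating sequences, or generating $D$ from root groups) is unavoidable here.

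For $D\subset\RLF(G_\xi)$, you correctly reduce via Lemma~\ref{stab} and Lemma~\ref{per=lf}(i) to showing that $g=\lim g_{n_k}$ is periodic, but the horofunction computation you propose is, as you yourself note, a dead end, and the "natural avenue" you sketch is not an argument. The missing idea is much simpler: each $g_{n_k}$ stabilises the spherical residue $R_{n_k}$, hence fixes a point of $|X|$ and is elliptic, i.e.\ periodic; and the set of periodic elements is \emph{closed} in $G$, because every simplicial isometry of the \cat realisation is either elliptic or hyperbolic and pointwise stabilisers of bounded sets are open (local finiteness). Hence the limit $g$ is itself periodic, and Lemma~\ref{per=lf}(i) concludes. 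Without the closedness of the set of elliptic elements (or some substitute), your argument does not close.
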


\begin{proof}
Let $D$ be a cluster value of the sequence $(G_{R_n})$. It suffices to prove that $D=\RLF(G_\xi)$. This indeed
implies that $(G_{R_n})$ admits $D$ has its unique accumulation point, and hence converges to $D$.

Since $X$ is locally finite, the pointwise stabiliser of every bounded set of $X$ is open in $G$. Moreover,
since $G$ acts by simplicial isometries on $|X|$, it follows that every element acts either as an elliptic or as
a hyperbolic isometry. This implies that the set of elliptic isometries is closed in $G$. Notice that this set
coincides with the set of periodic elements of $G$.\footnote{It turns out that the latter fact is general and
does not depend on the existence of an action on a \cat space. Indeed,  by \cite[Theorem 2]{Wil} the set of
periodic elements is closed in any totally disconnected locally compact group.} Since every element of $D$ is
limit of some sequence of periodic elements by Lemma~\ref{lem:ChabautyConvergence}, it follows that $D$ itself
is contained in the set of periodic elements.  Lemmas~\ref{per=lf}  and~\ref{stab} thus yield $D\subset\RLF(G_{\xi})$.

In order to prove the reverse inclusion, pick $x\in X$ and let $A$ be an apartment containing $Q(x,\xi)$. By
strong transitivity, there exists some  $k_n \in G_x$ such that $k_nR_n\in A$. As $G_x < G$ is compact, we may
assume upon extracting that $(k_n)$ converges to some $k\in G_x$. Let $R'_n=k_n. R_n$. Then $(R'_n)$ is
contained in $A$ converges to $k.\xi$. Furthermore, $(G_{R'_n})$ converges to $kDk^{-1}$ in $\Cg(X)$.

The sequence $(R'_n)$ penetrates and eventually remains in every $\alpha \in\Phi_A(k.\xi)$. In particular, for
any sufficiently large $n$, the pointwise stabiliser $G_{(\alpha)}$ of $\alpha$ is contained in $G_{R'_n}$. By
Lemma~\ref{lem:ChabautyConvergence}, this implies that $G_{(\alpha)} < kDk^{-1}$. Conjugating by $k^{-1}$, we
deduce that for all $\alpha\in\Phi_A(\xi)$, we have $G_{(\alpha)} <  D$. In view of Lemma~\ref{fixateurQ}, this
shows that $G_{(Q(x,\xi))} < D$. The desired results follows since $G_{(Q(x,\xi))} = \RLF(G_\xi)$ by
Lemma~\ref{per=lf}.
\end{proof}

\begin{lemma}\label{combchab}
Let $(R_n)$ be a sequence of spherical residues. If the sequence $(G_{R_n})$ converges to $D\in \Cg(X)$, then
$(R_n)$ also converges in $\Cr(X)$.

Furthermore, for all $\xi, \xi'\in \Cr(X)$, we have $\xi = \xi'$ if and only if $\RLF(G_\xi)= \RLF(G_{\xi'})$.
\end{lemma}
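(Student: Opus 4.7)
The two assertions will be proved in tandem. The first (convergence in $\Cr(X)$) follows from the second (injectivity) by a compactness argument: since $X$ is locally finite, $\Cr(X)$ is a closed subset of the product $\prod_{\sigma \in \Res(X)} \St(\sigma)$ of finite sets, hence compact. If $(G_{R_n}) \to D$ in $\Cg(X)$, every subsequence of $(R_n)$ admits a further subsequence converging in $\Cr(X)$; for any such $(R_{\varphi(n)}) \to \eta$, Lemma~\ref{chabcomb} yields $(G_{R_{\varphi(n)}}) \to \RLF(G_\eta)$, forcing $\RLF(G_\eta) = D$. Granting the injectivity, $\eta$ is unique, all subsequential limits coincide, and $(R_n)$ itself converges.

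For the injectivity, suppose $\RLF(G_\xi) = \RLF(G_{\xi'}) = D$. The strategy is to recover $\xi$ from the root-group content of $D$. By Proposition~\ref{prop:phi:bis}, pick a sequence $(R_n) \to \xi$ inside an apartment $A$ containing a sector pointing to $\xi$. By Proposition~\ref{prop:sector:roots} and Lemma~\ref{fixateurQ}, for every chamber $x$ of $A$ the pointwise stabiliser $\Fix(Q(x, \xi))$ is topologically generated by the root groups $G_{(\alpha)}$ for $\alpha \in \Phi_A(x) \cap \Phi_A(\xi)$, and by Lemma~\ref{per=lf}(ii) these pointwise stabilisers exhaust $D$ as $x$ varies over $\ch(A)$. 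Reading off which root groups of $A$ lie in $D$ therefore determines $\Phi_A(\xi)$, and by Lemma~\ref{lem:retraction:3} this determines $\xi$. Treating $\xi'$ analogously in some apartment $A'$ and using strong transitivity to bring both descriptions into a common apartment then yields $\Phi_A(\xi) = \Phi_A(\xi')$, whence $\xi = \xi'$.

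The hard part is twofold. First, one must arrange a common apartment $A$ that accommodates sectors pointing to both $\xi$ and $\xi'$; this uses strong transitivity together with the filtering property of sectors (Proposition~\ref{mmapp2}) to move between apartments while preserving the root-group identifications. Second, one must establish the converse containment---that a root group $G_{(\alpha)}$ of $A$ lies in $D$ only if $\alpha \in \Phi_A(\xi)$---which is needed to extract $\Phi_A(\xi)$ from $D$ unambiguously. For this converse, I would use the characterisation of $D$ as the set of periodic elements of $G_\xi$ furnished by Lemma~\ref{per=lf}(i): a spurious root group $G_{(\alpha)}$ with $\alpha \notin \Phi_A(\xi)$ would interact, through strong transitivity, with the root groups already known to sit in $D$ to produce non-periodic elements of $G_\xi$, contradicting the local finiteness of $D$.
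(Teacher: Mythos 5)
The compactness reduction for the first assertion is correct and matches the paper's. For the injectivity of $\xi \mapsto \RLF(G_\xi)$, however, you take a genuinely different route from the paper, and you stop short of closing it. You try to recover $\Phi_A(\xi)$ by inspecting which root groups of a fixed apartment $A$ lie in $D=\RLF(G_\xi)$, and then compare $\Phi_A(\xi)$ with $\Phi_A(\xi')$. Both difficulties you flag are real and neither is resolved. First, it is not clear that one can choose a single apartment $A$ that contains combinatorial sectors pointing to \emph{both} $\xi$ and $\xi'$; $\Phi_A(\xi')$ is simply not defined if $A$ carries no sector toward $\xi'$, and strong transitivity does not give you such a common apartment for free --- it lets you move one apartment onto another, but at the cost of replacing $\xi'$ by a translate $g\xi'$, which defeats the purpose. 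Second, the converse containment ($G_{(\alpha)} \subset D$ forces $\alpha \in \Phi_A(\xi)$) does hold, but the hand-waving appeal to ``spurious root groups producing non-periodic elements'' is not an argument; one would rather have to use thickness and the transitivity of $G_{(\alpha)}$ on apartments containing $\alpha$ (Lemma~\ref{fixateurQ}) to force $\xi(\sigma) \subset \alpha$ for every panel $\sigma \subset \partial\alpha$.

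The paper sidesteps both difficulties by working pointwise on $\Res(X)$ rather than inside a fixed apartment. It shows, for \emph{every} $x \in \Res(X)$, that the sector $Q(x,\xi)$ coincides with the fixed-point set of $G_{x,\xi}$ (Lemmas~\ref{clfixe} and~\ref{fixateurQ}), and that $G_{x,\xi} = R_x$ where $R = \RLF(G_\xi)$ (Lemma~\ref{per=lf}, since elements of $G_\xi$ fixing the residue $x$ are periodic). Thus $R$ alone determines $Q(x,\xi)$ for every $x$, hence $\proj_x(\xi)$ via Corollary~\ref{cor:proj}, hence $\xi$. This is both shorter and genuinely global; it never needs $\xi$ and $\xi'$ to share an apartment, nor a ``no spurious root groups'' claim. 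Your sketch identifies the right lemmas but applies them in a way that leaves the two hardest steps open, so as it stands the proposal has a genuine gap.
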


\begin{proof}
Assume that $(G_{R_n})$ converges. If the sequence $(R_n)$ has two accumulation points $\xi, \xi' \in \Cr(X)$,
then Lemma~\ref{chabcomb} implies that $\RLF(G_\xi)= \RLF(G_{\xi'})$. Therefore, the Lemma will be proved if one
shows that the stabilisers of two distinct points of $\Cr(X)$ have distinct LF-radicals.

Given any $\xi \in \Cr(X)$ and $x \in \Res(X)$, the sector $Q(x, \xi)$ coincides with the fixed-point-set of
$G_{x, \xi}$ by Lemmas~\ref{clfixe} and~\ref{fixateurQ}. Furthermore Lemma~\ref{per=lf} implies that $G_{x, \xi}
= R_x$, where $R = \RLF(G_\xi)$. Thus $Q(x, \xi)$ is nothing but the fixed point set of $R_x$ for all $x \in
\Res(X)$. If follows that for any other $\xi' \in \Cr(X)$ such that $\RLF(G_{\xi'}) = \RLF(G_\xi)$, the
respective combinatorial sectors based at any $x \in \Res(X)$ and associated to $\xi$ and $\xi'$ coincide. In
view of Corollary~\ref{cor:proj}, this implies that $\xi = \xi'$.
\end{proof}

We are now ready for the following.

\begin{proof}[Proof of Theorem~\ref{chab}]
Consider now the map $$\Psi: \Cr(X) \to \mathcal S(G) : \xi \mapsto \RLF(G_\xi).$$ By
Proposition~\ref{chabcomb}, the map $\Psi$ takes its values in $\Cg(X)$. By Lemma~\ref{combchab}, it is
bijective. The $\Aut(X)$-equivariance is obvious. It only remains to show that $\Psi$ is continuous.

Let $(\xi_n)$ be a sequence of elements of $\Cr(X)$ converging to $\xi\in \Cr(X)$. We claim that every
accumulation point of $(\Psi(\xi_n))$ equals $\Psi(\xi)$. Let $D$ be such an accumulation point. Upon
extracting, we shall assume that $(\Psi(\xi_n))$ converges to $D$.

Since $\xi_n$ belongs to $\Cr(X)$, there exist some sequences $(x_m^n)_m$ of spherical residues such that
$(x_m^n)_m$ converges to $\xi_n$ for each $n$. A diagonal argument shows that the sequence $(x_m^m)_m$ converges
to $\xi$. By Lemma~\ref{chabcomb}, we deduce that $(\Psi(x_m^m))_m$ converges to $\Psi(\xi)$ while
$(\Psi(x_m^n))_m$ converge to $\Psi(\xi_n)$. Therefore, the sequence $(\Psi(x_m^m))_m$ converges to $\lim_n
\Psi(\xi_n)=D$. The desired equality $\Psi(\xi)=D$ follows.
\end{proof}

\section{Comparison to the refined visual boundary}\label{sec:stratification}

As opposed to the previous section, we do not assume here that $X$ be locally finite. In order to simplify the
notation, we shall often identify $X$ with its \cat realisation $|X|$. This will not cause any confusion. This section is devoted to the relationship between the combinatorial and visual compactifications and their variants.

\subsection{Constructing buildings in horospheres}\label{sec:buildingXxi}

Let $\xi \in \bd X$ be a point in the visual boundary of $X$. In this section we present the construction of a building $X_\xi$ which is canonically attached to $\xi$; it is acted on by the stabiliser $G_\xi$ and should be viewed as a structure which is `transverse' to the direction $\xi$. The construction goes as follows.

Let $\Axi$ denote the set of all apartments $A$ such that $\xi \in \bd A$. Let also $\HAxi$ denote the set of all half-apartments $\alpha$ such that the visual boundary of the wall $\partial \alpha$   contains $\xi$. In particular, every $\alpha \in \HAxi$ is a half-apartment of some apartment in $\Axi$.

Since any geodesic ray is contained in some apartment (see \cite[Theorem E]{CH}), it follows that the set $\Axi$ is non-empty. This is not the case for $\HAxi$, which is in fact empty when $\xi$ is  a `generic' point at infinity. We shall not try to make this precise.

\begin{lemma}\label{lem:2rays}
For all $A, A' \in \Axi$ and each $C \in \ch(A)$ and each geodesic
ray $\rho' \subset A'$ pointing to $\xi$, there exists an apartment
$A'' \in \Axi$ containing both $C$ and a subray of $\rho'$.
\end{lemma}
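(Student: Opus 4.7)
My plan is to reduce this \cat geometric statement to the combinatorial theory of sectors developed in earlier sections, specifically invoking Proposition~\ref{mmapp2} on the intersection of combinatorial sectors. The first step is to pass from the visual ray $\rho'$ to a combinatorial boundary point: I would choose a strictly increasing sequence $t_n \to \infty$ with each $\rho'(t_n)$ in the interior of a chamber $D_n$ of $A'$. Since all $D_n$ lie in the single apartment $A'$, Corollary~\ref{cor:convergence:criterion} extracts a subsequence converging to some $\bar\xi \in \Cr(A') \subset \Cr(X)$. By the description in Proposition~\ref{Qxi}, the combinatorial sector $Q(D_0, \bar\xi)$ then contains $D_n$ for every sufficiently large $n$.

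Next, I would apply Proposition~\ref{mmapp2} to the sectors $Q(C, \bar\xi)$ and $Q(D_0, \bar\xi)$ to obtain a spherical residue $z$ with $Q(z, \bar\xi) \subset Q(C, \bar\xi) \cap Q(D_0, \bar\xi)$. Combining this with the explicit formula $Q(z, \bar\xi) = \bigcup_k \bigcap_{n \geq k} \Conv(z, D_n)$ from Proposition~\ref{Qxi} and with Remark~\ref{rem:sectors}, the aim is to show that $D_n \in Q(C, \bar\xi)$ for all $n$ large enough. Intuitively, once $n$ is so large that $D_n$ lies ``downstream'' of $z$ in the $\bar\xi$-direction, the combinatorial-convex-hull definition of $Q(z, \bar\xi)$ forces $D_n \in Q(z, \bar\xi) \subset Q(C, \bar\xi)$.

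Since every combinatorial sector is contained in an apartment, I would then pick an apartment $A''$ containing $Q(C, \bar\xi)$. This apartment $A''$ contains $C$ (the base of the sector) and, by the previous step, also the chambers $D_n$ for all $n \geq n_0$. The subray $\rho'|_{[t_{n_0}, \infty)}$ visits precisely these chambers in $A'$, so its pointset lies in $\bigcup_{n \geq n_0} |D_n| \subset |A''|$; thus $A''$ contains a subray of $\rho'$. Since $A''$ contains a ray pointing to $\xi$, it follows that $\xi \in \bd A''$, so $A'' \in \Axi$, as required.

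The main obstacle is the combinatorial verification in the second paragraph that $Q(C, \bar\xi)$ absorbs a cofinite tail of the $D_n$. Proposition~\ref{mmapp2} supplies only the existence of a common sub-sector $Q(z, \bar\xi)$, and the further inclusion $D_n \in Q(z, \bar\xi)$ for large $n$ must be proven explicitly using the $\Conv$-formula from Proposition~\ref{Qxi} together with the fact that $(D_n)$ is itself a defining sequence for $\bar\xi$. If this direct combinatorial route proves delicate, a fallback is to invoke \cite[Theorem~E]{CH} on a suitable geodesic ray in $|X|$ constructed from the asymptotic parallelism of $\rho'$ with the ray in $A$ from an interior point of $C$ to $\xi$.
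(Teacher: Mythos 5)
The central step of your argument --- that $Q(C,\bar\xi)$ absorbs a cofinite tail of the sequence $(D_n)$ --- is a genuine gap, and the ``intuitive'' justification you offer does not work as stated: it is \emph{false} in general that a sequence converging to $\bar\xi$ eventually enters a prescribed sector $Q(z,\bar\xi)$. (In a thick tree, take $D_n$ to be an edge hanging at distance $n$ off the ray $[z,\bar\xi)$ at the point $\gamma(n^2)$; then $D_n\to\bar\xi$ in $\Cc(X)$ but no $D_n$ lies in $Q(z,\bar\xi)$.) So you must use something specific to your situation, namely that the $D_n$ are supports of points on a single geodesic ray. The argument can be completed, but it needs two non-obvious inputs: (a) since half-apartments have geodesically convex realisations and supports are characterised by the half-apartments containing them, one gets $D_n\in\Conv(D_0,D_m)$ for all $m\geq n$, whence \emph{every} $D_n$ lies in $Q(D_0,\bar\xi)$; and (b) writing $Q(z,\bar\xi)=\bigcap_{\alpha\in\Phi_{A'}(z)\cap\Phi_{A'}(\bar\xi)}\alpha$ via Proposition~\ref{prop:sector:roots}, the roots in $\Phi_{A'}(z)\setminus\Phi_{A'}(D_0)$ form a \emph{finite} set (finiteness of the root-distance $d(z,D_0)$), and each of them contains $D_m$ for $m$ large since it lies in $\Phi_{A'}(\bar\xi)$; combined with (a) this yields $D_m\in Q(z,\bar\xi)\subset Q(C,\bar\xi)$ for all large $m$. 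Your final step also contains an error: the subray $\rho'|_{[t_{n_0},\infty)}$ does \emph{not} lie in $\bigcup_{n\geq n_0}|D_n|$ (it crosses many cells between consecutive sample points); instead you should use that $[\rho'(t_{n_0}),\rho'(t_m)]\subset|\Conv(D_{n_0},D_m)|\subset|A''|$ by convexity of the sector. A further small point: $\rho'$ may run inside a wall, so $\rho'(t_n)$ need not be interior to a chamber; take $D_n$ to be the support (a spherical residue) instead.

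For comparison, the paper's proof is far more elementary and bypasses the sector machinery entirely: it inducts on the gallery distance $d(C,\ch(A'))$. Taking a minimal gallery $C'=C_0,C_1,\dots,C_n=C$ from a closest chamber $C'$ of $A'$, the panel $C_0\cap C_1$ spans a wall of $A'$, one of whose half-apartments $\alpha$ contains a subray of $\rho'$; then $C_1\cup\alpha$ lies in an apartment of $\Axi$ strictly closer to $C$, and one concludes by induction. This uses only the basic incidence axioms of buildings, whereas your route leans on Propositions~\ref{Qxi}, \ref{prop:sector:roots} and~\ref{mmapp2} plus CAT(0) convexity, and still requires the repairs above to close.
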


\begin{proof}
We work by induction on $d(C, \ch(A'))$. Let thus $C'$ be a chamber
of $A'$ at minimal possible distance from $C$ and let $C'=
C_0,C_1,\dots,C_n = C$ be a minimal gallery. The panel which
separates $C_0$ from $C_1$ defines a wall in $A'$, and there is some
half-apartment $\alpha$ of $A'$ containing a subray of $\rho'$. Then
$C_1\cup\alpha$ is contained in some apartment, and the desired
claim follows by induction on~$n$.
\end{proof}

Given $R \in \Res(X)$, let $R_\xi$ denote the intersection of all $\alpha \in \HAxi$ such that $R \subset \alpha$. Thus, in the case of chambers, the map $C \mapsto C_\xi$ identifies two adjacent chambers of $X$ unless they are separated by some wall $\partial \alpha$ with $\alpha \in \HAxi$.  We call two elements of  $\mathcal C_\xi $ \textbf{adjacent} if they are the images of adjacent chambers of $X$.

Let $W$ be the Weyl group of $W$. Choose an apartment $A \in \Axi$ and view $W$ as a reflection group acting on $A$. The reflections associated to half-apartments $\alpha$ of $A$ which belong to $\HAxi$ generate a subgroup of $W$ which we denote by $W_\xi$.
By the main result of \cite{Deo}, the group $W_\xi$ is a Coxeter group and the set $ \{C_\xi \; | \; C \in \ch(A) \}$ endowed with the above adjacency relation is $W_\xi$-equivariantly isomorphic to the chamber-graph of the Coxeter complex of $W_\xi$.

\begin{lemma}
The Coxeter group $W_\xi$  depends only on $\xi$ but not on the choice of the apartment~$A$.
\end{lemma}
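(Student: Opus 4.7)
The plan is to prove the lemma in two stages: first handle the case where $A$ and $A'$ share a chamber by producing an isomorphism that fixes $\xi$, and then reduce the general case to this via Lemma~\ref{lem:2rays}. The main obstacle will be turning the ``shared subray'' output of Lemma~\ref{lem:2rays} into a shared chamber so that the first stage can be applied.

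For the shared-chamber case, suppose $A, A' \in \Axi$ share a chamber $C$. Let $\phi: A \to A'$ be the type-preserving isomorphism of Coxeter complexes obtained by restricting the retraction $\rho_{A', C}: X \to A'$ to $A$; it fixes $A \cap A'$ pointwise. The crux is that $\phi$ also fixes $\xi$ at infinity: since $|X|$ is \cat, the geodesic ray from the barycentre of $C$ to $\xi$ is unique, and by convexity each of $A, A'$ (both containing $C$ with $\xi$ in its visual boundary) contains this ray. Hence the ray lies in $A \cap A'$, is fixed pointwise by $\phi$, and consequently $\phi$ fixes $\xi \in \bd X$. It follows that $\phi$ carries walls of $A$ whose visual boundary contains $\xi$ bijectively onto the analogous walls of $A'$, giving a Coxeter-group isomorphism $W_\xi \cong W_\xi$ matching distinguished reflections.

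For the general case, let $A, A' \in \Axi$. Pick $C \in \ch(A)$ and let $\rho'$ be a geodesic ray in $A'$ pointing to $\xi$ that starts at an interior point of some chamber of $A'$. By Lemma~\ref{lem:2rays} there is $A'' \in \Axi$ containing both $C$ and a subray $\rho'_0$ of $\rho'$. Since $A$ and $A''$ share $C$, the first stage yields an isomorphism of the $W_\xi$'s attached to $A$ and to $A''$. The subtle point is that $A'' \cap A' \supset \rho'_0$ is a priori only a $1$-dimensional set and need not contain a chamber of $X$. The resolution is a transversality argument: $\rho'$, as a geodesic ray in the Davis realisation of the Coxeter complex $A'$ which starts in the interior of a chamber and never enters any wall non-transversely, lies in the interior of some chamber of $A'$ for all but a discrete set of parameter values. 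Hence $\rho'_0$ meets the interior of some chamber $D$ of $A'$; since $A''$ is a subcomplex of $X$ containing this interior point, the whole cell $D$ lies in $A''$. Applying the first stage to $A''$ and $A'$, which share the chamber $D$, gives the matching isomorphism, and composing with the previous one yields the desired conclusion.
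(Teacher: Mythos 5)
Your first stage reproduces the paper's argument exactly: the retraction through a shared chamber restricts to an isomorphism $A \to A'$ that fixes $A \cap A'$ pointwise, $A \cap A'$ contains a ray to $\xi$ by convexity, and hence the isomorphism carries $\HAxi$-walls of $A$ to $\HAxi$-walls of $A'$. The interesting divergence is in the second stage, where the paper simply writes that the general case ``follows from Lemma~\ref{lem:2rays}'' with no further comment, while you correctly notice that the \emph{statement} of Lemma~\ref{lem:2rays} only hands you a shared subray, which --- being $1$-dimensional --- need not yield a shared chamber between $A''$ and $A'$. That gap is real, and it is to your credit that you saw it and tried to fill it.

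Your fix, however, leans on the unproven assertion that a geodesic ray in the Davis realisation which starts in the interior of a chamber ``never enters any wall non-transversely'' and so lies in chamber interiors for all but a discrete set of times. This is plausible and I believe it is true, but it is genuinely delicate in a general piecewise-Euclidean \cat complex: a priori a geodesic can bend at a singular point, and walls, being convex, can a priori be entered and traversed for a whole interval of parameter values. To make this airtight you would need a local argument at each wall-crossing time (analysing the link and using that at a point in the relative interior of a panel the space is locally Euclidean, while at deeper strata the incoming direction is at distance $>\pi$ from any direction tangent to the wall, etc.). The paper sidesteps all of this: if you unpack the \emph{proof} of Lemma~\ref{lem:2rays} rather than just its statement, you see that it constructs a chain of apartments $A' = A_0, A_1, \dots, A_n = A''$ in $\Axi$ in which each consecutive pair $A_{i}, A_{i+1}$ shares an entire half-apartment --- in particular a chamber. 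Applying your stage one along this chain, plus once more for the pair $(A'', A)$ sharing $C$, gives the general case immediately and with no transversality input. So the approach is essentially the same, but the clean way to complete the second stage is to use the construction inside Lemma~\ref{lem:2rays} rather than its bare statement; if you do want to keep the transversality route, the key claim needs a proof.
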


\begin{proof}
By the above, it suffices to show that for any two $A, A' \in \Axi$, the adjacency graphs of $ \{C_\xi \; | \; C \in \ch(A) \}$ and $ \{C_\xi \; | \; C \in \ch(A') \}$ are isomorphic.

We claim that if the apartments $A$ and $A'$ contain a common chamber, then the retraction $\rho$ onto $A$ based at this chamber yields such an isomorphism. Indeed $\rho$ fixes $A \cap A'$ pointwise, and this intersection contains a ray pointing to $\xi$. This implies that for any half-apartment of $\alpha$ of $A'$, we have $\alpha \in \HAxi$ if and only $\rho(\alpha) \in \HAxi$. This proves the claim.

In view of Lemma~\ref{lem:2rays}, the general case of arbitrary  $A, A' \in \Axi$ follows from the special case that has just been dealt with.
\end{proof}

Keeping in mind the above preparation, the proof of the following result is a matter of routine verifications which are left to the reader. Lemma~\ref{lem:2rays} ensures that  two chambers of $X_\xi$ are contained in an apartment; this is the main axiom to check.

\begin{proposition}\label{immeuble}
The set $\mathcal C_\xi = \{C_\xi \; | \; C \in \ch(X) \}$ is the chamber-set of a building of type $W_\xi$ which we denote by $X_\xi$. Its full apartment system coincides with $\Axi$. The map $R \mapsto R_\xi$ is a $G_\xi$-equivariant map from $\Res(X)$ onto $\Res(X_\xi)$ which does not increase the root-distance.\qed
\end{proposition}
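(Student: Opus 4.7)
My plan is to verify the three building axioms for $\mathcal{C}_\xi$ in turn, then extract the properties of the map $\pi_\xi: R \mapsto R_\xi$. First, I would use the preceding discussion to transport the Coxeter-complex structure: for any $A \in \Axi$, the set $\pi_\xi(A) := \{C_\xi \mid C \in \ch(A)\}$ is $W_\xi$-equivariantly isomorphic to the Coxeter complex of $W_\xi$, which equips its panels with a canonical type function valued in the standard generating set of $W_\xi$. Because any two apartments in $\Axi$ share a common sub-sector pointing to $\xi$ (by Lemma~\ref{lem:2rays}) and walls of $\HAxi$ meeting this sub-sector coincide in both apartments, these type functions glue to a single type function on $\mathcal{C}_\xi$, turning it into a chamber system of type $W_\xi$. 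The proposed apartment system is $\{\pi_\xi(A) \mid A \in \Axi\}$.

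The main point to check is then the axiom that any two chambers $C_\xi, C'_\xi$ of $X_\xi$ lie in a common apartment. Choose lifts $C, C' \in \ch(X)$, and pick $A' \in \Axi$ containing $C'$ (such an $A'$ exists: take any geodesic ray of $|X|$ from an interior point of $C'$ to $\xi$, and invoke the main result of \cite{CH} to embed this ray in an apartment). Applying Lemma~\ref{lem:2rays} to the pair $A', A$ (where $A$ is any apartment of $\Axi$ containing $C$) yields $A'' \in \Axi$ containing $C$ together with a subray $\rho''$ of a ray in $A'$ pointing to $\xi$. It then suffices to produce $\tilde C' \in \ch(A'')$ with $\pi_\xi(\tilde C') = C'_\xi$, and I would obtain this by retracting $C'$ onto $A''$ based at a chamber sufficiently far along $\rho''$: since the retraction fixes $\rho''$ pointwise, and every wall of $\HAxi$ in $A'$ eventually runs parallel to $\rho''$ at infinity, one checks that such a wall is carried to a wall of $\HAxi$ in $A''$, so the $\xi$-equivalence class of $C'$ is preserved. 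The hard part of the whole argument lies precisely here: one must verify that no wall of $\HAxi$ is collapsed by the retraction and no spurious identification is introduced, which ultimately rests on the fact that $A'$ and $A''$ share the half-apartments of $\HAxi$ meeting $\rho''$.

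The remaining axioms are straightforward. For the existence of a type-preserving isomorphism between any two apartments $\pi_\xi(A), \pi_\xi(A'')$ of $X_\xi$ fixing their intersection, one lifts to the corresponding isomorphism between $A$ and $A''$ in $X$ (using strong transitivity on apartments containing a common half-apartment) and passes to the quotient; the $G_\xi$-action on $X$ permutes $\HAxi$ and therefore descends. This shows $X_\xi$ is a building and that its full apartment system is exactly $\pi_\xi(\Axi)$. Finally, $\pi_\xi$ is $G_\xi$-equivariant by construction, surjective onto $\Res(X_\xi)$ because each residue of $X_\xi$ arises as an intersection of stars of chambers in $\mathcal{C}_\xi$, and root-distance non-increasing: working in a fixed $A \in \Axi$, every half-apartment of $\pi_\xi(A)$ separating $R_\xi$ from $T_\xi$ lifts uniquely to an element of $\HAxi$ separating $R$ from $T$, so $|\Phi_{\pi_\xi(A)}(R_\xi, T_\xi)| \leq |\Phi_A(R, T)|$, and symmetrically, giving $d(R_\xi, T_\xi) \leq d(R, T)$; the general case reduces to the thin case via retractions, which are root-distance non-increasing by Proposition~\ref{prop:RootDistance}(ii).
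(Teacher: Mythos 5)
Your proposal follows essentially the same route as the paper, which explicitly leaves this proof to the reader as ``routine verifications'' and singles out Lemma~\ref{lem:2rays} as the key input for the one non-trivial axiom (any two chambers of $X_\xi$ lie in a common apartment) --- exactly the pivot of your argument. The delicate point you flag (that retracting onto $A''$ based at a chamber far along the shared ray preserves $\xi$-equivalence classes, because the retraction fixes a ray pointing to $\xi$ and hence maps $\HAxi$ to $\HAxi$) is the same mechanism the paper already uses to prove that $W_\xi$ is independent of the chosen apartment, so your fleshing-out is consistent with the intended verification.
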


\begin{remark}\label{rem:dim(X)}
We have $\dim(X_\xi) < \dim(X)$. This was established implicitly in
the course of the proof of Lemma~\ref{suitapp}.
\end{remark}

\subsection{A stratification of the combinatorial compactifications}

By Proposition~\ref{immeuble} each point $\xi$ of the visual
boundary of $X$ yields a building $X_\xi$ and it is now desirable to
compare the respective combinatorial bordifications of $X$ and
$X_\xi$.

\begin{theorem}\label{thm:stratification}
For each $\xi \in \bd X$, there is a canonical continuous injective
$\Aut(X)_\xi$-equivariant map $r_\xi : \Cr(X_\xi) \to \Cr(X)$.
Furthermore, identifying  $\Cr(X_\xi) $ with its image, one has the
following stratification:
$$\Cr(X) = \Res(X) \cup \bigg(\bigcup_{\xi \in \bd X} \Cr(X_\xi) \bigg).$$
\end{theorem}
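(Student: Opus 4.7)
\medskip
\noindent\textbf{Proof plan.}
To construct the map $r_\xi$, I would work apartment-by-apartment. Pick $\eta \in \Cr(X_\xi)$ and an apartment $A \in \Axi$, viewed simultaneously as an apartment of $X$ and of $X_\xi$ (the latter via Proposition~\ref{immeuble}). Using Lemma~\ref{lem:retraction:3} inside $X_\xi$, choose a sequence $(\bar R_n)$ of spherical residues of $A$ converging to $\eta$ in $\Cr(X_\xi)$. For each $n$, lift $\bar R_n$ to an $X$-residue $R_n \subset A$ with $(R_n)_\xi = \bar R_n$, chosen to sit ``deep in the $\xi$-direction'': concretely, fix a basepoint $x_0 \in A$ and a geodesic ray from $x_0$ in $A$ pointing to $\xi$, and pick $R_n$ at distance at least $n$ along that ray inside the fibre over $\bar R_n$. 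For any half-apartment $\alpha$ of $A$, either $\alpha \in \HAxi$, in which case the eventual behaviour of $(R_n)$ with respect to $\alpha$ is governed by that of $(\bar R_n)$ in $X_\xi$; or $\xi$ lies in the open half-space of one of $\alpha, -\alpha$ at infinity, in which case $(R_n)$ eventually enters (and remains in) the corresponding side. Hence $(R_n)$ is eventually constant modulo every wall of $A$, and Lemma~\ref{remarque} together with Corollary~\ref{cor:convergence:criterion} guarantees convergence in $\Cr(A) \subset \Cr(X)$. I would define $r_\xi(\eta)$ to be this limit.

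Well-definedness, continuity and $\Aut(X)_\xi$-equivariance will follow from the explicit description
$$\Phi_A(r_\xi(\eta)) \;=\; \Phi_A^{X_\xi}(\eta) \,\cup\, \bigl\{\alpha \;|\; \xi \in \alpha \setminus \bd(\partial \alpha)\bigr\}.$$
Independence of the choice of $A \in \Axi$ reduces, via Lemma~\ref{lem:2rays}, to comparing two apartments sharing a chamber, and then follows from the fact that the retraction between them is an isomorphism on the set of roots defining $\Phi_A(r_\xi(\eta))$. Injectivity of $r_\xi$ is immediate: from the formula, $r_\xi(\eta) = r_\xi(\eta')$ forces $\Phi_A^{X_\xi}(\eta) = \Phi_A^{X_\xi}(\eta')$ on every $A \in \Axi$, hence $\eta = \eta'$ by Lemma~\ref{lem:retraction:3} applied in $X_\xi$. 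Continuity likewise follows since convergence in $\Cr(X_\xi)$ and in $\Cr(X)$ are both detected via the systems of half-apartments in apartments (Lemma~\ref{remarque}).

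For the stratification, let $\zeta \in \Cr(X) \setminus \Res(X)$. By Proposition~\ref{prop:phi} there is an apartment $A$ with $\zeta \in \Cr(A)$, and by Proposition~\ref{prop:phi:bis} a sequence $(R_n) \subset \Res(A)$ converging to $\zeta$; since $\zeta \notin \Res(X)$ the sequence is unbounded in $A$. Pick $p_n \in |R_n|$; passing to a subsequence, the $p_n$ escape to a point $\xi \in \bd A \subset \bd X$, so $A \in \Axi$. Since all the residues $(R_n)_\xi$ lie in the apartment $A$ of $X_\xi$, a further extraction combined with Corollary~\ref{cor:convergence:criterion} (applied inside $X_\xi$) produces $\eta \in \Cr(X_\xi)$ with $(R_n)_\xi \to \eta$. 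I would then verify $r_\xi(\eta) = \zeta$ by checking that $\Phi_A(\zeta)$ matches the formula above root by root: for $\alpha \in \HAxi$ both sides agree with the eventual $X_\xi$-behaviour of $(\bar R_n)$, while for $\alpha \notin \HAxi$ the condition is dictated by the side of $\partial\alpha$ on which $\xi$ lies, which coincides with the eventual side of $(R_n)$ because $p_n \to \xi$.

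The main obstacle I foresee is the verification in the stratification argument that the combinatorial limit $\zeta$ is actually recaptured by the pair $(\xi, \eta)$ extracted from the sequence. The delicate point is coordinating the two extractions (one to force the $p_n$ to have a visual limit $\xi$, one to force the images $(R_n)_\xi$ to converge in $\Cr(X_\xi)$) and then confirming that half-apartments $\alpha$ of $A$ with $\partial\alpha$ tangent to $\xi$ at infinity are correctly separated by the $\HAxi$ criterion; this is where the precise behaviour of $|R_n|$ relative to the wall $\partial\alpha$ for $\alpha \notin \HAxi$ must be pinned down, using that the distance from $p_n$ to $\partial\alpha$ tends to infinity in the non-tangent case.
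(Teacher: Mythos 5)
Your proposal is essentially the paper's own argument: construct $r_\xi$ apartment-by-apartment by lifting a sequence from $\Cr(X_\xi)$ to residues of $X$ pushed deep towards $\xi$, checking convergence wall-by-wall (separating the $\HAxi$ case from the rest), and for the stratification extract first the visual limit of the centres, then the combinatorial limit of the projections in $\Cr(X_\xi)$. The paper packages the root description you give as $\Phi_A(r_\xi(\eta))$ into a pointwise formula $\widehat f(\sigma) = r_\xi(f(\sigma_\xi))$ and then verifies it is a limit, but the underlying content and the appeal to Lemma~\ref{remarque}, Corollary~\ref{cor:convergence:criterion}, Lemma~\ref{lem:retraction:3}, Lemma~\ref{lem:2rays} and Proposition~\ref{immeuble} are the same.
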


The following lemma establishes a first basic link between points at
infinity in the combinatorial bordification and points in the visual
boundary.

\begin{lemma}\label{lem:proj-visual}
Let $(R_n)$ be a sequence of spherical residues and let $(p_n)$
denote the sequence of their centres. Assume that $(R_n)$ converges
to some $f \in \Cr(X)$. Then $(p_n)$ admits convergent subsequences.
Furthermore, any accumulation point of $(p_n)$ lies in the visual
boundary of any combinatorial sector pointing to $f$.
\end{lemma}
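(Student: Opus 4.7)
The plan is to exploit the combinatorial sector $Q = Q(x_0, f)$ based at some fixed $x_0 \in \Res(X)$, and prove that each geodesic segment $[p_0, p_n]$ in $|X|$ (where $p_0 \in |x_0|$ is the centre of $x_0$) lies, on arbitrarily long initial portions, inside the \cat realisation $|Q|$. Passing to the limit will then confine every accumulation point of $(p_n)$ to the closure of $|Q|$ in the visual compactification $|X| \cup \bd|X|$; the existence of convergent subsequences in the first place will come from the compactness of this visual compactification, which uses the (implicit) local finiteness of $X$.

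The key geometric input is that the combinatorial convex hull $\Conv(x_0, R_n)$, being the intersection of finitely many half-apartments in a common apartment, has a \cat realisation $|\Conv(x_0, R_n)|$ which is closed and geodesically convex in $|X|$; it contains both $p_0 \in |x_0|$ and $p_n \in |R_n|$, hence contains the whole geodesic $[p_0, p_n]$. The key combinatorial input is Corollary~\ref{cor:sector:local}: for any $T > 0$, the finite set $F_T$ of spherical residues whose realisation meets the closed ball $B(p_0, T)$ satisfies $F_T \cap \Conv(x_0, R_n) = F_T \cap Q$ for all $n$ large enough. Taking realisations, one obtains
\[
|\Conv(x_0, R_n)| \cap B(p_0, T) \;=\; |Q| \cap B(p_0, T)
\]
for all sufficiently large $n$, so the initial portion of $[p_0, p_n]$ of length $\min(T, d(p_0, p_n))$ lies inside $|Q|$ as soon as $n$ is large.

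Given any convergent subsequence $p_{n_k} \to q$ extracted by compactness of $|X| \cup \bd|X|$, the previous step combined with continuity of geodesics in their endpoints shows that for every $T > 0$ the initial piece of $[p_0, q]$ (or of the ray $[p_0, q)$ in case $q \in \bd|X|$) of length $T$ lies inside the closed set $|Q|$; letting $T \to \infty$ forces $q$ into the closure of $|Q|$, namely $|Q| \cup \bd|Q|$. When $f \notin \Res(X)$ the sequence $(R_n)$ escapes every bounded region, so $q$ must actually lie in $\bd|Q|$, i.e. the visual boundary of the sector. The main obstacle I anticipate is turning the purely combinatorial agreement on $F_T$ into the geometric equality of the intersections with $B(p_0, T)$: one has to take $F_T$ slightly larger than strictly necessary, so that every residue contributing a point of $|\Conv(x_0, R_n)| \cap B(p_0, T)$ already belongs to $F_T$ and hence to $Q$ for large $n$.
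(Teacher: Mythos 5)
Your argument is essentially the one the paper gives: the segments $[p_0,p_n]$ lie in the \cat{}-convex sets $|\Conv(x_0,R_n)|$, Corollary~\ref{cor:sector:local} forces these to agree with $|Q(x_0,f)|$ on bounded regions for large $n$, and one passes to the limit. But there is one point where your proposal, as written, would fail: this lemma lives in Section~\ref{sec:stratification}, where $X$ is \emph{not} assumed locally finite (the paper even inserts a remark immediately after the statement warning that subconvergence of $(p_n)$ in $X\cup\bd X$ is not a priori clear for this very reason). So you cannot extract the convergent subsequence from ``compactness of the visual compactification of $|X|$'', and for the same reason your set $F_T$ of \emph{all} residues meeting $B(p_0,T)$ need not be finite, so Corollary~\ref{cor:sector:local} does not apply to it directly.

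Both defects are repaired by the same observation, which your own argument already contains: the sector $Q=Q(x_0,f)$ is contained in a single apartment $A$, and $|A|$ is a proper \cat space. Once you know that arbitrarily long initial portions of $[p_0,p_n]$ lie in $|Q|\subset|A|$ for $n$ large (for this, run the $F_T$ argument with $F_T$ taken to be the finite set of residues of $A$ meeting $B(p_0,T)$, together with the convexity of $|\Conv(x_0,R_n)|$ to control which residues of $\Conv(x_0,R_n)$ can appear near $p_0$), local compactness of $|A|$ supplies the convergent subsequences, and the accumulation points are confined to $\bd|Q|$ exactly as you describe. With that substitution your proof coincides with the paper's.
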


It is not clear \emph{a priori} that $(p_n)$ subconverges in $X \cup
\bd X$ since $X$ need not be locally compact.

\begin{proof}
Fix a base point $p \in X$ and let $R \in \Res(X)$ denote its
support. For each $n$, the geodesic segment joining $p$ to $p_n$ is
contained in $\Conv(p, p_n)$ which is geodesically convex also in
the sense of \cat geometry. Therefore, in view of
Corollary~\ref{cor:sector:local}, it follows that for any $r>0$
there is some $N$ such that the geodesic segment $[p, p_n]$ lies
entirely in $Q(R, f)$ for all $n >N$. Since combinatorial sectors
are contained in apartments and since apartments are locally
compact, it follows that $(p_n)$ subconverges to some $\xi \in \bd
X$, and the above argument implies that the geodesic ray $[p, \xi)$
is entirely contained in the sector $Q(R, f)$.
\end{proof}

\begin{proof}[Proof of Theorem~\ref{thm:stratification}]
Let $\xi \in \bd X$, $f \in \Cr(X_\xi)$. We shall now define an
element $\widehat f : \Res(X) \to \Res(X)$ belonging to
$\prod_{\sigma \in \Res(X)} \St(\sigma)$. To this end, we proceed as
follows.

Consider the map $\Res(X) \to \Res(X_\xi) : \sigma \mapsto
\sigma_\xi$ which was constructed in Section~\ref{sec:buildingXxi}.
Let $\sigma \in \Res(X)$, let $\rho $ be a geodesic ray emanating
from the centre of $\sigma$ and pointing to $\xi$ and let $A$ be an
apartment containing $\rho$. Let $\Psi_A(\xi)$ denote the set of all
half-apartments $\alpha $ of $A$ such that $\alpha \not \in \Axi$
and $\alpha$ contains a subray of $\rho$. Notice that if $\alpha \in
\Psi_A(\xi)$, then $-\alpha \not \in \Psi_A(\xi)$.

Given $\tau \in \St(\sigma_\xi)$, there is a unique spherical
residue $\tau' \in \St(\sigma)$ such that $(\tau')_\xi = \tau$ and
that $\tau'$ is contained in every root $\alpha \in \Psi_A(\xi)$
containing $\sigma$. We denote this residue $\tau'$ by
$r_\xi(\tau)$. It is easy to see that the map $r_\xi :
\St(\sigma_\xi) \to \St(\sigma)$ does not depend on the choice of
the apartment~$A$.

Now we define  $\widehat f \in \prod_{\sigma \in \Res(X)}
\St(\sigma)$ by
$$\widehat f : \sigma \mapsto r_\xi( f(\sigma_\xi)).$$
Notice that the definition of $\widehat f$ does not depend on the choice of $A$.

We claim that $\widehat f$ belongs to $\Cr(X)$. Indeed, let $(x_n)$
of spherical residues of $X_\xi$ converging to $f$ and contained in
some apartment $A'$ of $X_\xi$ (see Proposition~\ref{prop:phi}). We
may view $A'$ as an apartment of $X$. Choose $R_n \in \Res(A')$ with
$(R_n)_\xi = x_n$ in such a way that the sequence $(R_n)$ eventually
penetrates and remains in the interior of every $\alpha \in
\Psi_{A'}(\xi)$. It is easy to see that such a sequence exists. If
follows from Lemma~\ref{remarque} and Proposition~\ref{immeuble}
that $(R_n)$  converges in $\Cr(A')$. The fact that $(R_n)$
converges in $\Cr(X)$ follows from
Corollary~\ref{cor:convergence:criterion}. The fact that $\lim_n
R_n$ coincides with $\widehat f$ follows from
Lemma~\ref{lem:retraction:3}. This proves the claim.

\medskip
We now show that $\Cr(X)$ admits a stratification as described
above. Let $h \in \Cr(X) \setminus \Res(X)$ and let $(R_n)$ be a
sequence of spherical residues contained in some apartment $A$ of
$X$ and converging to $h$ (see Proposition~\ref{prop:phi}). Upon
extracting, the sequence of centres of the $R_n$'s converges to some
$\xi \in \bd A$, and the sequence $((R_n)_\xi)_{n \geq 0}$ converges
in $\Cr(X_\xi)$. Let $h'$ denote its limit. Using the very
definition of the map $f \mapsto \widehat f$, one verifies that
$\widehat{h'} = h$, which yields the desired conclusion.
\end{proof}

\subsection{Comparison to the refined visual boundary}

Besides its own intrinsic \cat realisation, the building $X_\xi$
inherits a \cat realisation in a canonical way from $X$. This
follows actually from a general construction which may be performed
in an arbitrary \cat space and which attaches a transverse \cat
space to every point in the visual boundary. This construction was
described by Karpelevi\v{c} in the case of symmetric spaces; it was
introduced by Leeb~\cite{Lee} in the general context of \cat spaces
and used recently in~\cite{Cap} to study the structure of amenable
groups acting on \cat spaces. A brief description is included below.

Let $\xi\in\partial_\infty X$. We let $X^*_\xi$ denote the set of
geodesical rays $\rho$ pointing towards $\xi$. The set  $X^*_\xi$ is
endowed with a pseudo-distance defined by
$$d(\rho,\rho')=\inf_{t,t'\geq 0}d(\rho(t),\rho'(t')).$$

If $b_\xi$ is a Busemann function associated to $\xi$, and if the
parametrisation of $\rho$ and $\rho'$ is chosen so that
$b_\xi\circ\rho=b_\xi\circ\rho'$, then in fact
$d(\rho,\rho')=\lim_{t\to +\infty} (\rho(t),\rho'(t))$. This remark
justifies that $d$ is indeed a pseudo-distance.

Identifying points at distance~$0$ in $X^*_\xi$ yields a metric
space $X'_\xi$. There is no reason for this new space to be
complete;  its metric completion is denoted by $X_\xi$. There is a
canonical projection
$$\pi_\xi: X\to X_\xi$$
which associates to a point $x$ the (equivalence class of the)
geodesic ray from $x$ to $\xi$. It is immediate to check that
$\pi_\xi$ is $1$-Lipschitz.

Moreover, there is a canonical morphism $\varphi'_\xi: G_\xi\to
\Isom(X'_\xi)$, where $G = \Isom(X)$, defined by
$$\varphi'_\xi(g).\pi_\xi(x)=\pi_\xi(g.x).$$
The space $X_\xi$ is {\rm{CAT}}(0) (see \cite[Proposition 2.8]{Lee}). Furthermore the morphism $\varphi_\xi$ is continuous (see~\cite[Proposition 4.3]{Cap}).

\medskip	
As before, the space $X_\xi$ is \emph{transverse} to the direction
$\xi$. Since each transverse space $X_\xi$ admits its own visual
boundary, it is natural to repeat inductively the above construction
and consider sequences $(\xi_1, \xi_2, \dots)$ such that $\xi_{n+1}
\in \bd X_{\xi_1, \xi_2, \dots \xi_n}$. The next proposition shows
that this inductive process terminates after finitely many steps (in
the case of buildings, this should be compared to
Remark~\ref{rem:dim(X)}):

\begin{lemma}
There exists an integer $K\in\N$, depending only on $X$, such that
for every sequence $(\xi_1,\dots,\xi_K)$ with
$\xi_1\in\partial_\infty X$ and $\xi_{i+1}\in \bd
X_{\xi_1,\dots,\xi_i}$ we have $\bd
X_{\xi_1,\dots,\xi_K}=\varnothing$.
\end{lemma}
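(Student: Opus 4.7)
The plan is to iterate Remark~\ref{rem:dim(X)} and conclude by a dimension count. Let $\dim X$ denote the maximal possible rank of a spherical residue of $X$, which equals the (geometric) dimension of Davis' \cat realisation $|X|$; it is a finite non-negative integer since $X$ has finite rank. I would take $K := \dim X$ (possibly $K := \dim X + 1$ to be safe in the base case).

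First I would verify the inductive step. For every $\xi \in \bd X$, the construction of Section~\ref{sec:buildingXxi} endows $X_\xi$ with the structure of a building of type $W_\xi$, whose Davis realisation is canonically and $G_\xi$-equivariantly identified with the transverse \cat space $X_\xi$ defined in the present subsection. Granting this identification, Remark~\ref{rem:dim(X)} gives $\dim X_\xi < \dim X$, and the construction may be iterated to the building $X_\xi$. A straightforward induction along any admissible sequence $(\xi_1,\dots,\xi_n)$ then yields
$$\dim X_{\xi_1,\dots,\xi_n} \leq \dim X - n.$$

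Next comes the base case. When a building $Y$ satisfies $\dim Y = 0$, every spherical residue of $Y$ has rank $0$, so the Davis realisation of $Y$ reduces to a discrete set of vertices (a single point if $Y$ is irreducible and the Weyl group is trivial). Such a space carries no geodesic ray, and therefore $\bd Y = \varnothing$. Combining this with the dimension drop yields $\bd X_{\xi_1,\dots,\xi_K} = \varnothing$ for every admissible sequence of length $K$, as required.

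The only point requiring genuine care — and the main obstacle to making the argument fully rigorous — is the identification between the building $X_\xi$ produced in Section~\ref{sec:buildingXxi} and the \cat transverse space $X_\xi$ of the present subsection: this is what licences iterating Remark~\ref{rem:dim(X)} after the first step, since \emph{a priori} the transverse space construction is purely metric and need not preserve the class of buildings. Once that identification (which the paper implicitly uses by conflating notation) is in hand, the lemma follows from the one-line dimension count above.
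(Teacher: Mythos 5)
Your approach is genuinely different from the paper's: the paper simply cites the remark following~\cite[Corollary~4.4]{Cap}, where the termination is established for \emph{arbitrary} finite-dimensional \cat spaces by a purely metric dimension argument (the geometric dimension of the transverse space drops), with no building-theoretic input at all. You instead reconstruct the result from inside the paper using the combinatorial dimension (maximal rank of a spherical residue) and Remark~\ref{rem:dim(X)}. Your dimension count is correct and your base case is fine (a building with $\dim = 0$ has trivial Weyl group, hence is a single chamber, hence every \cat realisation is a point with empty visual boundary), so $K = \dim X$ works. You have also put your finger on exactly the right obstruction: Remark~\ref{rem:dim(X)} is established for directions $\xi$ in the visual boundary of the \emph{Davis} realisation $|X|$ and produces the \emph{combinatorial} building of Section~\ref{sec:buildingXxi}, whereas the iterated spaces $X_{\xi_1,\dots,\xi_i}$ in the lemma are Leeb's \cat transverse spaces, which the paper only asserts — without proof, and not claiming isometry with the Davis realisation — to be \emph{some} \cat realisation of the combinatorial building; the Fuchsian example at the end of Section~\ref{sec:stratification} shows these two realisations can be genuinely different (a point versus a nontrivial spherical building). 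So to iterate, one must check that a boundary point $\xi_{i+1}$ of the transverse \cat space still picks out a subset of walls of the building $X_{\xi_1,\dots,\xi_i}$ to which the dimension-drop argument of Lemma~\ref{suitapp} applies. This is plausible and likely patchable (the argument there only uses that the intersection of the relevant walls contains a geodesic ray, which holds in any \cat realisation), but it is not supplied by the paper, and the paper's citation to~\cite{Cap} exists precisely to sidestep it by staying at the \cat level throughout. In short: correct strategy, honestly flagged gap, but the gap is real and your proof is incomplete without closing it; the paper avoids the issue rather than resolving it.
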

\begin{proof}
See the remark after \cite[Corollary 4.4]{Cap}.
\end{proof}

The following definition is taken over from~\cite{Cap}.

\begin{definition} The \textbf{refined visual boundary of level $k$} of $X$ is the set of all sequences
$(\xi_1,\dots,\xi_k,x)$, where  $\xi_1\in\partial_\infty X$ and
$\xi_{i+1}\in\partial_\infty X_{\xi_1,\dots,x_i}$  for all $1\leq
i\leq  k-1$ and $x\in X_{\xi_1,\dots,\xi_k}.$

The \textbf{refined visual boundary} of $X$ is the union over all
$k\in \N$ of the refined boundaries of level $k$. It is denoted by
$\bdfine X$.
\end{definition}

As mentioned earlier, in case the underlying space $X$ is a
building, the transverse space $X_\xi$ may be viewed as a \cat
realisation of the building $X_\xi$ constructed combinatorially in
Section~\ref{sec:buildingXxi}.  The following result shows that in
some sense, the refined visual boundary is a realisation of the
boundary at infinity of the combinatorial bordification.

\begin{theorem}\label{=raff}
Let $X$ be a building. Then there is an $\Aut(X)$-equivariant map
$\Theta: X \cup \bdfine X \to \Cr(X)$.
\end{theorem}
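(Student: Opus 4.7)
The plan is to construct $\Theta$ by induction on $\dim(X)$, exploiting both Theorem~\ref{thm:stratification} and Remark~\ref{rem:dim(X)} (which guarantees $\dim(X_{\xi_1}) < \dim(X)$ for each $\xi_1 \in \bd X$). The base case handles the situation where $X$ has dimension zero, so that $\bd X$ and hence $\bdfine X$ are empty; here we define $\Theta$ on $X$ by sending each point $x \in X$ to its supporting spherical residue $\sigma(x) \in \Res(X)$, regarded as a point of $\Cr(X)$ via the embedding $\projr$ of Proposition~\ref{disccomb}. This same support map handles the restriction of $\Theta$ to $X \subset X \cup \bdfine X$ in all cases.

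For the inductive step, an arbitrary element of $\bdfine X$ of positive level has the form $(\xi_1, \xi_2, \ldots, \xi_k, x)$ with $\xi_1 \in \bd X$, and the remaining data $(\xi_2, \ldots, \xi_k, x)$ determines an element of $X_{\xi_1} \cup \bdfine X_{\xi_1}$. Here we identify Leeb's transverse \cat space $X_{\xi_1}$ with a \cat realisation of the combinatorial building $X_{\xi_1}$ of Proposition~\ref{immeuble}, as explained in the paragraph preceding the theorem. By the inductive hypothesis applied to the strictly lower-dimensional building $X_{\xi_1}$, we obtain an $\Aut(X_{\xi_1})$-equivariant map $\Theta_{\xi_1} : X_{\xi_1} \cup \bdfine X_{\xi_1} \to \Cr(X_{\xi_1})$. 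Theorem~\ref{thm:stratification} then supplies a canonical $\Aut(X)_{\xi_1}$-equivariant injection $r_{\xi_1} : \Cr(X_{\xi_1}) \hookrightarrow \Cr(X)$, and we set
$$\Theta(\xi_1, \xi_2, \ldots, \xi_k, x) := r_{\xi_1}\bigl( \Theta_{\xi_1}(\xi_2, \ldots, \xi_k, x) \bigr).$$
Well-definedness is not an issue since the stratification of Theorem~\ref{thm:stratification} is a disjoint decomposition of $\Cr(X)$, ensuring that the images land in the stratum associated to the first coordinate $\xi_1$.

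Equivariance is then built in step by step: the support map is $\Aut(X)$-equivariant; the stratification map $r_{\xi_1}$ is $\Aut(X)_{\xi_1}$-equivariant by Theorem~\ref{thm:stratification}; and the inductive hypothesis yields $\Aut(X_{\xi_1})$-equivariance of $\Theta_{\xi_1}$. Combining these pieces, any $g \in \Aut(X)$ intertwines the action $g \cdot (\xi_1, \xi_2, \ldots, \xi_k, x) = (g\xi_1, g\xi_2, \ldots, g\xi_k, g x)$ with the construction of $\Theta$. The main obstacle will be to make precise the identification between the Leeb transverse \cat space $X_{\xi_1}$ and the \cat realisation of the combinatorial building $X_{\xi_1}$ from Proposition~\ref{immeuble}, and to verify that this identification is $G_{\xi_1}$-equivariant and compatible with passage to supporting spherical residues. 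In practical terms, this reduces to checking that the image under $\pi_{\xi_1}$ of a sector $Q(y, \xi_1)$ of $X$ is an apartment-like subset of the transverse building $X_{\xi_1}$ that matches the combinatorial apartment structure given by Proposition~\ref{immeuble}, along lines closely parallel to those of Lemma~\ref{lem:retraction:3} and the proof of Theorem~\ref{thm:stratification}.
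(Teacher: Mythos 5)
Your proposal is correct and follows essentially the same route as the paper: the restriction to $X$ is the support map composed with $\projr$, and the points of $\bdfine X$ are handled by induction on $\dim(X)$ (via Remark~\ref{rem:dim(X)}) using the partition of $\bdfine X$ indexed by $\xi_1\in\bd X$ together with the inclusion $r_{\xi_1}:\Cr(X_{\xi_1})\to\Cr(X)$ from Theorem~\ref{thm:stratification}. The identification of Leeb's transverse space with the combinatorial building of Proposition~\ref{immeuble}, which you flag as the remaining point to make precise, is likewise taken for granted in the paper (it is asserted in the paragraph preceding the theorem), so your argument is at the same level of detail as the original.
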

\begin{proof}
There is an $\Aut(X)$-equivariant surjective map $\Theta: X \to
\Res(X)$ which associates to each point its support. Recall that the
support of a point $x$ may be characterised as the unique spherical
residue contained in the intersection of all half-apartments
containing $x$.

Let now $\xi \in \bd X$; we consider the space $X_\xi$ both as a
\cat space as descrived above and as a building as described in
Section~\ref{sec:buildingXxi}. By induction on $\dim(X)$ (see
Remark~\ref{rem:dim(X)}) there is a well-defined
$\Aut(X)_\xi$-equivariant map $\bdfine X_\xi \to \Cr(X_\xi)$. Upon
post-composing with the map $r_\xi$ of
Theorem~\ref{thm:stratification}, we may assume that this map takes
it values in $\Cr(X)$. Since by definition, we have a partition
$$\bdfine X = \bigsqcup_{\xi \in \bd X} \bdfine X_\xi,$$
the existence of the desired map $\Theta$ follows.
\end{proof}

Notice that it is not clear \emph{a priori} (and not true in
general) that this map is surjective. Indeed, it might be the case
that the \cat space $X_\xi$ be reduced to a single point while the
associated building $X_\xi$ is a spherical building not reduced to a
single chamber. This happens for example of $X$ is a Fuchsian
building and $\xi$ is an end point of some wall.

\section{Amenability of stabilisers}

Let $X$ be a building. The following shows the relationship between
amenable subgroups of $\Aut(X)$ and the combinatorial bordification
$\Cr(X)$.

\begin{theorem}\label{moyennfix}
Let $G$ be a locally compact group acting continuously on $X$. Then
some finite index subgroup of $G$ fixes a point in $\Cr(X)$.

Assume in addition that the stabiliser in $G$ of every spherical
residue is compact. Then the stabiliser of any point of $\Cr(X)$ is
a closed amenable subgroup.
\end{theorem}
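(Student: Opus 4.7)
The plan is to handle the two assertions separately, combining the combinatorial--group-theoretic dictionary of Theorem~\ref{chab} with the structural results of~\cite{Cap}.

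For the first assertion (with the implicit amenability hypothesis on $G$ as in Theorem~\ref{amenable:intro}), I would appeal to the main fixed-point result of~\cite{Cap}: any amenable locally compact group acting continuously on a complete \cat space of finite telescopic dimension has a finite-index subgroup with a fixed point in $Y \cup \bdfine Y$. Applying this to the continuous action of $G$ on the \cat realisation $|X|$ yields a finite-index subgroup $G' \leq G$ and a point $p \in |X| \cup \bdfine X$ fixed by $G'$. Theorem~\ref{=raff} then furnishes an $\Aut(X)$-equivariant map $\Theta \colon X \cup \bdfine X \to \Cr(X)$; the point $\Theta(p) \in \Cr(X)$ is then fixed by $G'$, as required.

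For the second assertion, closedness of $G_\xi$ follows from the continuity of the $G$-action on $\Cr(X)$, since $G_\xi$ is the preimage of $\{\xi\}$ under the continuous orbit map $g \mapsto g \cdot \xi$. To establish amenability, the first step is to show that $\RLF(G_\xi)$ is topologically locally finite. Fix a sequence $(R_n)$ of spherical residues converging to $\xi$; by Theorem~\ref{chab}, the compact groups $G_{R_n}$ converge to $\RLF(G_\xi)$ in the Chabauty topology. Given $h_1, \dots, h_k \in \RLF(G_\xi)$, Lemma~\ref{lem:ChabautyConvergence} provides approximating sequences $h_i^{(n)} \in G_{R_n}$; since each subgroup $\langle h_1^{(n)}, \dots, h_k^{(n)} \rangle$ has compact closure inside the compact group $G_{R_n}$, a diagonal argument together with the closedness of the set of periodic elements (Lemma~\ref{per=lf}) shows that $\overline{\langle h_1, \dots, h_k\rangle}$ is compact in $G$. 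Thus $\RLF(G_\xi)$ is topologically locally finite, hence amenable.

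It then remains to prove that $G_\xi / \RLF(G_\xi)$ is amenable; I would argue by induction on $\dim(X)$. The base case $\xi \in \Res(X)$ is immediate since $G_\xi$ is then compact by assumption. For the inductive step, Lemma~\ref{lem:proj-visual} yields a visual boundary point $\eta \in \bd X$ stabilised by $G_\xi$, and Theorem~\ref{thm:stratification} identifies $\xi$ with the image under $r_\eta$ of some $\xi' \in \Cr(X_\eta)$. The natural continuous homomorphism $G_\eta \to \Isom(X_\eta)$ restricts to map $G_\xi$ into the stabiliser of $\xi'$, which is amenable by the inductive hypothesis since $\dim(X_\eta) < \dim(X)$ by Remark~\ref{rem:dim(X)}; the kernel is controlled by the Busemann character along $\eta$ together with a horoball stabiliser, yielding an amenable extension. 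The main obstacle lies in this inductive step: one must verify that the compactness-of-stabilisers hypothesis descends to the induced action on $X_\eta$, and that the kernel of the transverse homomorphism is amenable. A conceptually cleaner route is to invoke~\cite{Cap} directly, using the criterion that a closed subgroup $H < \Aut(X)$ is amenable if and only if $H/\RLF(H)$ is virtually abelian; this reduces the task to establishing virtual abelianness of $G_\xi/\RLF(G_\xi)$, which via the translation character along $\eta$ and the Weyl-group structure of the transverse action can be handled directly.
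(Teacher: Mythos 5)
Your proof of the first assertion is exactly the paper's: apply \cite[Theorem~1.4]{Cap} to the action on $|X|$ to get a finite-index subgroup fixing a point of $X \cup \bdfine X$, then push forward by the map $\Theta$ of Theorem~\ref{=raff}. (You are also right that the amenability hypothesis on $G$ is implicit in the statement.) The second assertion, however, has genuine gaps. First, you invoke Theorem~\ref{chab} to study $\RLF(G_\xi)$, but that theorem requires $X$ to be locally finite and the action to be strongly transitive; neither is assumed here (the section on amenability explicitly drops local finiteness, and only compactness of residue stabilisers is hypothesised). The step is in any case superfluous: $\RLF(G_\xi)$ is topologically locally finite by definition, so its amenability is never the issue. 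The issue is entirely in the passage from $G_\xi$ to something amenable.

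The two real gaps are in your inductive step. (1) To launch the induction you need a point $\eta \in \bd X$ that is \emph{fixed} by all of $G_\xi$. Lemma~\ref{lem:proj-visual} only produces an accumulation point of the centres of one particular sequence, after passing to a subsequence; nothing makes this point canonical, so $G_\xi$ need not fix it. The paper obtains canonicity by observing that the family $\mathcal Q_\xi$ of all combinatorial sectors pointing to $\xi$ is a filtering family of closed convex subsets with empty intersection (Proposition~\ref{mmapp2}), and then applying \cite[Theorem~1.1]{CapraceLytchak} and \cite[Proposition~1.4]{BalserLytchak} to extract a canonical barycentre of $\bigcap_{Q \in \mathcal Q_\xi}\bd Q$, which is $G_\xi$-invariant by construction. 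This is the key idea your argument is missing. (2) You structure the induction as "prove $G_\xi/\RLF(G_\xi)$ amenable," which forces you to control the kernel of $G_\xi \to \Isom(X_\eta)$ --- precisely the obstacle you name and then leave unresolved (your closing appeal to "the translation character and the Weyl-group structure" is an assertion, not an argument). The paper avoids this entirely by inducting on $\dim(X)$ to prove the different statement that $G_\xi$ fixes a point of the refined visual bordification $X \cup \bdfine X$: at each stage one shows $f$ determines a unique $f' \in \Cr(X_\eta)$ fixed by $G_\xi$ (using Lemmas~\ref{remarque}, \ref{lem:roots} and \ref{lem:retraction:3}), descends via $\dim(X_\eta) < \dim(X)$, and only at the very end invokes \cite[Theorem~1.5]{Cap}, which delivers amenability of stabilisers of points of $\bdfine X$ in one stroke. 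Restructuring your induction around that fixed-point statement would close both gaps.
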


\begin{proof}
By~\cite[Theorem 1.4]{Cap} (see also
\cite[Theorem~1.7]{CapraceLytchak} in case $X$ is not locally
compact), the group $G$ has a finite index subgroup $G^*$ which
fixes a point in $X \cup \bdfine X$. Its image under the equivariant
map $\Theta$ of Theorem~\ref{=raff} is thus a $G^*$-fixed point in the
combinatorial bordification $\Cr(X)$.

Assume now that elements of $\Res(X)$ haves compact stabilisers in
$G$ and let $f \in \Cr(X)$. We shall prove by induction on $\dim(X)$
that the stabiliser $G_f$ fixes some point in the refined visual
bordification $X \cup \bdfine X$. The desired result on amenability
will then be provided by \cite[Theorem~1.5]{Cap} (see also the
remark following Theorem~1.1 in \emph{loc.~cit.} as well as
\cite[Theorem~1.7]{CapraceLytchak} for the non-locally compact
case).

If $f \in \Res(X)$, then $G_f$ fixes the centre of the residue $f$
and there is nothing to prove. Since the latter happens when $X$ is
has dimension~$0$, the induction can start and we assume henceforth
that $f$ is a point at infinity.

Notice that combinatorial sectors are closed and convex in the \cat
sense. Let $\mathcal Q_f$ denote the collection of all combinatorial
sectors pointing to $f$. By Proposition~\ref{mmapp2}, the set
$\mathcal Q_f$ forms a \textbf{filtering family} of closed convex
subsets, \emph{i.e.} any finite intersection of such sectors is
non-empty and contains such a sector. Since $f$ lies at infinity, it
follows that $\bigcap \mathcal Q_f$ is empty. It then follows from
\cite[Theorem~1.1]{CapraceLytchak} and
\cite[Proposition~1.4]{BalserLytchak} that the intersection of the
visual boundaries of all elements of $\mathcal Q_f$ admits a
canonical barycentre $\xi \in \bd X$ which is thus fixed by $G_f$.
In particular $G_f$ acts on the building $X_\xi$ transverse to
$\xi$.

We claim that $G_f$ fixes a point in $\Cr(X_\xi)$. In order to
establish it, notice first that by definition $\xi$ belongs to the
visual boundary of every apartment containing a sector in $\mathcal
Q_f$. Pick such an apartment $A$. Then $A$ may also be viewed as an
apartment of $X_\xi$ and its walls in $X_\xi$ is a subset of its
walls in $X$. By Lemmas~\ref{remarque} and~\ref{lem:roots} and
Corollary~\ref{cor:convergence:criterion}, it follows that $f$
determines a point $f' \in \Cr(X_\xi)$. Furthermore, since $A$
contains a subsector of every element of $\mathcal Q_f$, it follows
from Lemma~\ref{lem:retraction:3} that $f'$  is uniquely determined
by $f$. In particular $G_f$ fixes $f' \in \Cr(X_\xi)$ as claimed.

Since $\dim(X_\xi) < \dim(X)$ by Remark~\ref{rem:dim(X)}, it follows
from the induction hypothesis that $G_{f} < G_{f'}$ fixes a point in
the refined visual bordification $X_\xi \cup \bdfine X_\xi$. By definition, the latter embeds in the refined visual boundary
$\bdfine X$. Thus we have shown that $G_f$ fixes a point in the
refined visual boundary of $X$ as desired.
\end{proof}

\appendix
\section{Combinatorial compactifications of \cat cube complexes}

In this appendix, we outline how some of the above results may be
adapted in the case of finite-dimensional \cat cube complexes. Since
the arguments are generally similar but easier than in the case of
buildings, we do not include detailed proofs but content ourselves
by referring to the appropriate arguments in the core of the text.

Let thus $X$ be such a space. The $1$-skeleton $X^{(1)}$ induces a
combinatorial metric on the set of vertices $X^{(0)}$ which is
usually called the $\ell^1$-metric. In general it does not coincide
with the restriction of the \cat metric. The distance between two
vertices may be interpreted as the number of hyperplanes separating
them.

Let $\mathcal P$ denote the product of all pairs $\{h^+, h^-\}$ of
complementary half-spaces. Then there is a canonical embedding
$X^{(0)} \to \mathcal P$ which is defined by remembering on which
side of every wall a point lies. The closure of $X^{(0)}$ in
$\mathcal P$ is denoted by $\Cu(X)$. It is called the \textbf{Roller
compactification} or \textbf{ultrafilter compactification} of $X$;
see~\cite[\S3.3]{Guralnick} and references therein. It is a
natural analogue of the minimal combinatorial compactification of buildings introduced in the core of the paper. Notice that,
as opposed to the case of buildings, the space $\Cu(X)$ is compact
even if $X$ is not locally finite. The following result is due to
U.~Bader and D.~Guralnick (unpublished); it should be compared to
Theorem~\ref{thm:horo}.

\begin{proposition}
The ultrafilter compactification coincides with the horofunction
compactification of the vertex-set $X^{(0)}$ endowed with the
$\ell^1$ metric.
\end{proposition}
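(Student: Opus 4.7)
The plan is to adapt the strategy of Theorem~\ref{thm:horo}. I will build an $\Aut(X)$-equivariant continuous bijection $\Phi : \Cu(X) \to \Ch(X^{(0)})$ extending the common embedding of $X^{(0)}$ in both spaces; since $\Cu(X)$ is compact and $\Ch(X^{(0)})$ is Hausdorff, such a bijection is automatically a homeomorphism. The central observation is that the $\ell^1$-distance simply counts the hyperplanes separating two vertices, so every horofunction can be rewritten purely in terms of the sides of the hyperplanes in which a vertex lies.

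Fix a base vertex $v_0 \in X^{(0)}$. For any vertex $v$, a case analysis on whether a hyperplane $H$ separates $v$ from $v_0$, from $w$, from both, or from neither, yields the identity
$$f_v(w) \;=\; d(v,w) - d(v, v_0) \;=\; \sum_{H \in \mathcal{H}(v_0, w)} \epsilon_v(H),$$
where $\mathcal{H}(v_0, w)$ denotes the finite set of hyperplanes separating $v_0$ from $w$, and $\epsilon_v(H) = +1$ if $v$ lies on the $v_0$-side of $H$ and $-1$ otherwise; the contributions of hyperplanes outside $\mathcal{H}(v_0, w)$ cancel against the subtracted term $d(v, v_0)$. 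Since $\epsilon_v(H)$ only depends on which half-space of $H$ contains $v$, the very same formula makes sense for any $\xi \in \Cu(X)$ (which by definition specifies a half-space $\xi(H)$ for every $H$), and produces a function $f_\xi : X^{(0)} \to \R$. For fixed $w$, the value $f_\xi(w)$ depends on only finitely many coordinates of $\xi \in \mathcal{P}$, so $\xi \mapsto f_\xi(w)$ is continuous; and $f_\xi$ is $1$-Lipschitz because adjacent vertices $w, w'$ satisfy $\mathcal{H}(v_0, w) \triangle \mathcal{H}(v_0, w') = \{H\}$ for the single hyperplane $H$ dual to the edge $\{w,w'\}$. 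This defines the continuous map $\Phi : \Cu(X) \to \mathscr F(X^{(0)}, v_0)$, whose restriction to $X^{(0)}$ is the tautological map $v \mapsto f_v$; equivariance under $\Aut(X)$ is immediate from the shape of the formula.

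For injectivity I would invert the formula one hyperplane at a time: given a hyperplane $H$, pick a dual edge $\{w_1, w_2\}$ with $w_1$ and $v_0$ on the same side of $H$. Then $\mathcal{H}(v_0, w_2) = \mathcal{H}(v_0, w_1) \sqcup \{H\}$, and the formula gives $f_\xi(w_2) - f_\xi(w_1) = \epsilon_\xi(H)$. Thus every coordinate of $\xi$ is recoverable from $f_\xi$, so $\Phi$ is injective. The image $\Phi(\Cu(X))$ is compact, hence closed in the Hausdorff space $\mathscr F(X^{(0)}, v_0)$, and contains $X^{(0)}$, so it contains $\Ch(X^{(0)})$; conversely continuity of $\Phi$ together with the fact that $\Cu(X)$ is the closure of $X^{(0)}$ in $\mathcal{P}$ force $\Phi(\Cu(X)) \subseteq \Ch(X^{(0)})$, yielding the homeomorphism. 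The only real technical step is the counting identity displayed above, but this is a routine bookkeeping exercise in the hyperplane formalism of \cat cube complexes and plays exactly the role of the root-distance computation in the proof of Theorem~\ref{thm:horo}; no genuinely new ideas are required.
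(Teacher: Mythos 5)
Your argument is correct, and since the paper states this proposition without proof (attributing it to Bader--Guralnick and merely pointing the reader to Theorem~\ref{thm:horo}), the right benchmark is the proof of Theorem~\ref{thm:horo} itself. Your wall-counting identity $f_v(w)=\sum_{H\in\mathcal H(v_0,w)}\epsilon_v(H)$ is the exact analogue of the root-counting formula $f_{R_n}(x)=\frac12\bigl(|\Phi_A(R_n,x)|-|\Phi_A(R_n,y_0)|+|\Phi_A(x,R_n)|-|\Phi_A(y_0,R_n)|\bigr)$ used there, and I checked it: partitioning hyperplanes according to which of $v_0,w$ they separate from $v$ gives $f_v(w)=|B|-|C|$ with $\mathcal H(v_0,w)=B\sqcup C$, matching your sum. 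What you do differently --- and what makes the cube case genuinely easier, as the appendix promises --- is that the Roller coordinates record the side of each wall directly, so the horofunction admits a closed-form expression in those coordinates; this lets you replace the paper's induction on the root-distance (needed there because $\Cr(X)$ records projections rather than sides of walls) by a one-line continuity check, and lets you recover every coordinate $\xi(H)$ from the first difference $f_\xi(w_2)-f_\xi(w_1)$ across an edge dual to $H$, which is a clean route to injectivity. Two minor points you should make explicit: the paper defines $\Ch$ only for proper metric spaces, whereas $X^{(0)}$ with the $\ell^1$-metric is proper only when $X$ is locally finite, so for general $X$ you must take $\mathscr F(X^{(0)},v_0)$ with the topology of pointwise convergence (which is still compact and Hausdorff, and is what your continuity argument actually uses); and equivariance holds only after identifying $\mathscr F(X^{(0)},v_0)$ with the quotient of all $1$-Lipschitz functions by the constants, since the formula itself depends on $v_0$. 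Neither affects the validity of the proof.
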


The following is an obvious adaption Lemma~\ref{remarque}; it is
established with the same proof.

\begin{lemma}\label{ap:remarque}
Let $(v_n)$ be a sequence of vertices.  Then the sequence $(v_n)$ in
$\Cu(X)$ if and only if for each wall $W$ there is some $N$ such
that the subsequence $(v_n)_{n >N}$ lies entirely on one side of
$W$.\qed
\end{lemma}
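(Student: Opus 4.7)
The plan is to unwind the definition of $\Cu(X)$ as a subspace of the product $\mathcal P = \prod_W \{h^+, h^-\}$ (indexed over pairs of complementary half-spaces associated with the walls of $X$) and to observe that convergence in the product topology here reduces to coordinatewise convergence. This is especially simple since every factor is a two-point discrete space, so convergence in a single factor is just eventual constancy.

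For the forward implication, I would fix a wall $W$ with associated pair $\{h^+, h^-\}$ and consider the projection $\pi_W : \mathcal P \to \{h^+, h^-\}$, which is continuous by definition of the product topology. If $(v_n)$ converges to some $\xi \in \Cu(X)$, then $\pi_W(v_n) \to \pi_W(\xi)$ in a two-point discrete space, which forces $\pi_W(v_n) = \pi_W(\xi)$ for all sufficiently large $n$. Under the embedding $X^{(0)} \hookrightarrow \mathcal P$, the $W$-coordinate of a vertex $v$ is exactly the half-space of $W$ that contains $v$; hence eventual constancy of $\pi_W(v_n)$ is precisely the assertion that $(v_n)$ eventually stays on a fixed side of $W$.

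For the reverse implication, assume that for each wall $W$ there is a side $\alpha(W) \in \{h^+, h^-\}$ eventually containing $(v_n)$. Define $\xi \in \mathcal P$ by $\xi(W) = \alpha(W)$ for every wall $W$. Then by hypothesis $\pi_W(v_n) = \pi_W(\xi)$ for all sufficiently large $n$ (with the threshold depending on $W$), so $v_n \to \xi$ coordinatewise and therefore in the product topology of $\mathcal P$. Since $\Cu(X)$ is by definition the closure of $X^{(0)}$ in $\mathcal P$ and $(v_n) \subset X^{(0)}$, the limit $\xi$ automatically lies in $\Cu(X)$, giving convergence in $\Cu(X)$.

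No real obstacle arises: the statement is essentially a restatement of what convergence means in a product of two-point discrete factors, together with the tautology that the closure of $X^{(0)}$ contains every coordinatewise limit of vertex sequences. The only thing worth stressing is that one does not need to verify any coherence condition on the limit element $\xi$ (for instance, that it corresponds to an ``ultrafilter'' of half-spaces): such consistency is automatic from the fact that $\xi$ is obtained as an actual limit in $\mathcal P$ of genuine vertices of $X$.
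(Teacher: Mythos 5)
Your proof is correct and is exactly the argument the paper intends: since $\Cu(X)$ is by definition a subspace of the product $\mathcal P=\prod_W\{h^+,h^-\}$ of two-point discrete factors, convergence is coordinatewise eventual constancy, and the $W$-coordinate of a vertex is the side of $W$ containing it. Your observation that no coherence condition on the limit needs checking (because it is a genuine limit of points of $X^{(0)}$ in the closed set $\Cu(X)$) correctly disposes of the only point where one might hesitate, and the two-point nature of each factor also explains why, unlike the buildings version (Lemma~\ref{remarque}), no third alternative of ``lying on the wall'' appears here.
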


This allows one to associate with every $\xi \in \Cu(X)$  the set
$\Phi(\xi)$ of all half-spaces in which every sequence converging to
$\xi$ penetrates and eventually remains in. We define the
\textbf{combinatorial sector} based at a  vertex $v$ and pointing to
$\xi$ as the set
$$Q(v, \xi) = \bigcap_{h \in \Phi(v) \cap \Phi(\xi)} h.$$

The \textbf{(combinatorial) convex hull} of a set of vertices is
defined as the intersection of all half-spaces containing it. Having
this in mind, it is straightforward to prove that for all $v \in
X^{(0)}$ and any sequence $(v_n)$ of vertices converging to some
$\xi \in \Cu(X)$, we have
$$Q(v, \xi) = \bigcup_{k \geq 0} \bigcap_{n \geq k} \Conv(v, v_n),$$
compare Propositions~\ref{Qxi} and~\ref{prop:sector:roots}. The key
property of combinatorial sectors pointing to some $\xi \in \Cu(X)$
is that they form a filtering family:

\begin{proposition}\label{ap:filtering}
Let $v, v' \in X^{(0)}$ and $\xi \in \Cu(X)$. Then there exists some
vertex $v''$ such that $Q(v'', \xi) \subset Q(v, \xi) \cap Q(v',
\xi)$.
\end{proposition}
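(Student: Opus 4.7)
The plan is to first reduce the statement to finding a single vertex $v''$ that lies in $Q(v, \xi) \cap Q(v', \xi)$. Indeed, any such vertex automatically satisfies $\Phi(v'') \cap \Phi(\xi) \supset (\Phi(v) \cup \Phi(v')) \cap \Phi(\xi)$, and since $Q(v'', \xi)$ is obtained by intersecting over this larger collection of half-spaces, it is contained in $Q(v, \xi) \cap Q(v', \xi)$.

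To produce such a $v''$, I will exploit the median operation available on any \cat cube complex: for any three vertices $a, b, c$ there is a unique vertex $m(a, b, c)$ characterised as the unique vertex lying in every half-space that contains at least two of $a, b, c$. Fix any sequence $(v_n)$ of vertices converging to $\xi$ (which exists by the very definition of $\Cu(X)$) and set $m_n = m(v, v', v_n)$. By construction $m_n$ lies in the intervals $[v, v_n]$, $[v', v_n]$ and $[v, v']$; in particular each $m_n$ belongs to the finite set $[v, v']$.

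The key intermediate step is to verify that $(m_n)$ is eventually constant. Only the finitely many hyperplanes separating $v$ from $v'$ are relevant to distinguishing vertices of $[v, v']$, and on each such hyperplane the sequence $(v_n)$ eventually chooses a definitive side by Lemma \ref{ap:remarque}. The median characterisation then pins down on which side of this wall $m_n$ lies, for all sufficiently large $n$. Calling $v''$ the resulting stable vertex, the identity $v'' = m(v, v', v_n)$ yields $v'' \in \Conv(v, v_n) \cap \Conv(v', v_n)$ for every large enough $n$. Combining with the formula
$$Q(v, \xi) = \bigcup_{k \geq 0} \bigcap_{n \geq k} \Conv(v, v_n)$$
recalled in the excerpt (together with its analogue for $v'$), this gives $v'' \in Q(v, \xi) \cap Q(v', \xi)$, which closes the argument by the initial reduction.

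The main obstacle, to the extent that there is one, is the stabilisation of the median sequence $(m_n)$; this rests crucially on the finiteness of the interval $[v, v']$ in a \cat cube complex. That finiteness, together with the median structure, is precisely what allows the proof to bypass the delicate inductive argument on root-distance used in the building case (Proposition \ref{mmapp2}).
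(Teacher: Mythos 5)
Your proof is correct, but it takes a genuinely different route from the paper's. The paper establishes Proposition~\ref{ap:filtering} by induction on $\dim(X)$, mimicking the argument of Lemma~\ref{suitapp} which passes to the \cat realisation, chooses an accumulation point in the visual boundary, constructs a transverse lower-dimensional space and descends. Your argument instead exploits the median operation, a structure available for \cat cube complexes (equivalently, median graphs) but absent from general buildings, and which the paper's uniform treatment therefore does not isolate.

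Both steps of your argument are sound. The reduction is immediate from the definition $Q(v,\xi) = \bigcap_{h \in \Phi(v) \cap \Phi(\xi)} h$: a vertex $v''$ of $Q(v,\xi) \cap Q(v',\xi)$ satisfies $\Phi(v) \cap \Phi(\xi) \subset \Phi(v'')$ and $\Phi(v') \cap \Phi(\xi) \subset \Phi(v'')$, so $Q(v'',\xi)$ is an intersection over a family of half-spaces containing both of the families defining $Q(v,\xi)$ and $Q(v',\xi)$. For the existence of such a $v''$, the key observations are that the median $m_n = m(v, v', v_n)$ lies in the finite set $\Conv(v,v')$, that the only walls distinguishing vertices of $\Conv(v,v')$ are the finitely many separating $v$ from $v'$, and that on each such wall the median lies on whichever side $v_n$ does (since $v_n$ together with exactly one of $v, v'$ forms a majority). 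Lemma~\ref{ap:remarque} then forces $m_n$ to stabilize, and the stable value lies in $\Conv(v,v_n) \cap \Conv(v',v_n)$ for all large $n$, hence in $Q(v,\xi) \cap Q(v',\xi)$ by the displayed formula for sectors.

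What your approach buys is a short, self-contained argument with no dimension induction and no detour through the \cat geometry of $X$. What the paper's approach buys is uniformity of exposition with the building case, where no median exists and the inductive machinery of Lemma~\ref{suitapp} and Proposition~\ref{mmapp2} is genuinely required.
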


\begin{proof}
Use induction on $\dim(X)$ mimicking the proof of
Lemma~\ref{suitapp}.
\end{proof}

Assume for the moment that $X$ is locally finite; then the automorphism group $G = \Aut(X)$ is locally compact and we may as before consider the closure of the set of vertex-stabilisers in the Chabauty compact space $\mathcal S(G)$ of closed subgroups of $G$. Notice however that one should not expect the latter to coincide with the ultrafilter compactification in general: the most obvious reason for this is that the group-theoretic compactification need not be a genuine compactification if $G$ is to small --- for example if $G$ is discrete and torsion free, the group-theoretic compactification is a singleton. In fact, as opposed to the case of buildings, where the condition of strongly transitive actions is very natural, the transitivity properties one should impose on $G$ to make sure that the group-theoretic compactification is indeed a compactification of the vertex set do not seem natural at all. Therefore we shall not pursue this here and content ourselves with the following fact. %, which should be compared to Lemma~\ref{chabcomb}.

\begin{proposition}
Let $(v_n)$ be a sequence of vertices of $X$ converging to some $\xi \in \Cu(X)$. Then the sequence of stabilisers $(G_{v_n})$ converges in the Chabauty topology and its limit coincides with $\RLF(G_\xi)$.
\end{proposition}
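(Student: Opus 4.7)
The plan is to mirror the strategy of the building case (Theorem~\ref{chab}), with filtering sectors (Proposition~\ref{ap:filtering}) playing the role previously played by apartments. I would prove separately that every Chabauty accumulation point $D$ of the sequence $(G_{v_n})$ is contained in $\RLF(G_\xi)$, and conversely that $\RLF(G_\xi) \subset D$; combined with uniqueness of the accumulation point, this would yield both convergence and equality.

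First I would establish the cube-complex analog of Lemma~\ref{stab}: if $g = \lim g_n$ along a subsequence with $g_n \in G_{v_n}$, then for each half-space $h$ the sequence $(v_n)$ is eventually on the side $\xi(h)$ of $h$, and the identity $g_n v_n = v_n$ combined with the continuity of the action on half-spaces forces $g \cdot \xi = \xi$, placing $D$ inside $G_\xi$. Next, since each $g_n$ lies in the compact stabiliser $G_{v_n}$, every element of $D$ is a limit of periodic elements; appealing to the closedness of the set of periodic elements in a totally disconnected locally compact group (as used in the proof of Lemma~\ref{chabcomb}) then shows that $D$ consists entirely of periodic elements.

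The heart of the argument would be a cube-complex analog of Lemma~\ref{per=lf}, identifying the periodic elements of $G_\xi$ with $\RLF(G_\xi)$. Given finitely many periodic $g_1,\dots,g_k \in G_\xi$, each fixes some vertex $w_i$ by the \cat fixed-point theorem. Combining an analog of Lemma~\ref{clfixe}, asserting that an element fixing a vertex $v$ and $\xi$ pointwise fixes the sector $Q(v,\xi)$, with the filtering property of Proposition~\ref{ap:filtering}, one produces a vertex $w$ with $Q(w,\xi) \subset \bigcap_i Q(w_i,\xi)$ that is fixed by $\langle g_1,\dots,g_k\rangle$. This yields relative compactness, hence local finiteness, and shows $D \subset \RLF(G_\xi)$. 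For the reverse inclusion, given $g \in \RLF(G_\xi)$ the analysis shows $g$ pointwise fixes some sector $Q(v_0,\xi)$; using $v_n \to \xi$ together with the local finiteness of $X$ and an analog of Corollary~\ref{cor:sector:local}, one argues that $v_n$ eventually lies in a subcomplex fixed by $g$, whence $g \in G_{v_n}$ for large $n$ and Lemma~\ref{lem:ChabautyConvergence} gives $g \in D$.

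The main obstacle is the cube-complex analog of Lemma~\ref{clfixe}: cube complexes lack Coxeter-theoretic rigidity, so an automorphism fixing a vertex $v$ and a point $\xi \in \Cu(X)$ may in principle permute vertices of $Q(v,\xi)$ rather than fix them pointwise. One may therefore need to refine the target, replacing $Q(v,\xi)$ with a canonical subcomplex on which the relevant stabilisers are forced to act trivially (for example the pointwise fixator of the hyperplanes dual to $\Phi(v) \cap \Phi(\xi)$), or to replace the filtering argument by a direct use of properness of the $G$-action to extract common fixed points. The complementary difficulty in the reverse inclusion, namely producing approximating elements of $G_{v_n}$ without strong transitivity, should become tractable once the sector stabiliser has been correctly identified, as one can then hope simply to take $g_n = g$ for all sufficiently large $n$.
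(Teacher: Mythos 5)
Your proposal reproduces the paper's own proof step for step: $D\subset G_\xi$ via the argument of Lemma~\ref{stab}, $D$ consists of periodic elements, identification of the periodic elements of $G_\xi$ with $\RLF(G_\xi)$ via an analogue of Lemma~\ref{clfixe} combined with the filtering property of Proposition~\ref{ap:filtering}, and the reverse inclusion via pointwise fixity of a sector which eventually contains the $v_n$. The obstacle you flag is therefore exactly the crux, and your suspicion is justified: the naive analogue of Lemma~\ref{clfixe} is \emph{false} for cube complexes. Take $X$ to be the standard cubulation of $\R^2$ with vertex set $\Z^2$, and let $\xi\in\Cu(X)$ be the ultrafilter selecting $\{x\ge n\}$ and $\{y\ge n\}$ for every $n$. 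The diagonal flip $s:(x,y)\mapsto(y,x)$ fixes the vertex $v=(0,0)$ and fixes $\xi$, and $Q(v,\xi)$ is the positive quadrant, yet $s$ swaps its vertices $(1,0)$ and $(0,1)$. The building-case proof of Lemma~\ref{clfixe} uses in an essential way that automorphisms are type-preserving (so that the induced automorphism of an apartment fixing a chamber is trivial); nothing plays that role here.

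Moreover the failure is not a repairable technicality of this particular route: in the example above the periodic elements of $G_\xi=\Z^2\rtimes\langle s\rangle$ do not even form a subgroup (the involutions $s$ and $\tau_{(1,-1)}\circ s$ lie in $G_\xi$ and their product is an infinite-order translation), one checks directly that $\RLF(G_\xi)=\{1\}$, and yet for $v_n=(n,n)$ one has $s\in G_{v_n}$ for all $n$, so by Lemma~\ref{lem:ChabautyConvergence} every Chabauty accumulation point of $(G_{v_n})$ contains $s$; in fact $\lim_n G_{v_n}=\{1,s\}\neq\RLF(G_\xi)$. So the statement itself fails for $G=\Aut(X)$ without an additional hypothesis, and neither of your proposed workarounds (shrinking the target subcomplex, or invoking properness to extract common fixed points) can close the gap, since any correct argument must use a restriction on $G$ playing the role of type preservation --- for instance that an element fixing $v$ and $\xi$ fixes every half-space in $\Phi(v)\cap\Phi(\xi)$. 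For what it is worth, the paper's own proof asserts the pointwise fixity ``by induction on the distance to $v$'' and the identification of the periodic part with $\RLF(G_\xi)$ ``by similar arguments as in Lemma~\ref{per=lf}'', so it contains precisely the gap you identified; you have located the genuine difficulty rather than overlooked a known remedy.
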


\begin{proof}
Let $D$ be an accumulation point of the sequence  $(G_{v_n})$. It suffices to show that $D = \RLF(G_\xi)$.

The proof of Lemma~\ref{stab} applies \emph{verbatim} to the present situation and ensures that $D \subset G_\xi$. Moreover, by similar arguments as in Lemma~\ref{per=lf}, one deduces from Proposition~\ref{ap:filtering} that the set of periodic elements of $G_\xi$ coincides with $\RLF(G_\xi)$. Since Lemma~\ref{lem:ChabautyConvergence} implies that $D$ consists of periodic elements, one obtains the inclusion $D \subset \RLF(G_\xi)$. 

In order to prove the reverse inclusion, consider an element $g \in \RLF(G_\xi)$. Then $g$ is periodic and hence it fixes some cube $C$ of $X$. Since the point $\xi$ determines exactly one side of each of the walls of $C$, it follows that $g$ fixes some vertex $v$ of $C$. In particular $g$ stabilises the sector $Q(v, \xi)$. It is easy to see by induction on the distance to $v$ that $g$ fixes pointwise all vertices contained in $Q(v, \xi)$. On the other hand, Lemma~\ref{ap:remarque} implies that the sequence $(v_n)$ penetrates and eventually remains in $Q(v, \xi)$. Therefore, we deduce that $g $ belongs to $G_{v_n}$ for any sufficiently large $n$. By Lemma~\ref{lem:ChabautyConvergence}, this implies that $g \in D$ as desired.
\end{proof}

We now drop off the assumption that $X$ be locally finite. The ultrafilter compactification may also be compared to the visual
boundary in a similar way as in Section~\ref{sec:stratification}; in
particular $\Cu(X)$ admits a stratification as in
Theorem~\ref{thm:stratification}. This may be used to established
the following by mimicking the proof of Theorem~\ref{moyennfix}.

\begin{theorem}
Every amenable locally compact group acting continuously on $X$ has
a finite index subgroup which fixes  some point in $\Cu(X)$.

Conversely, given a locally compact group $G$ acting continuously on
$X$ in such a way that every vertex has compact stabiliser, then the
stabiliser in $G$ of every point of $\Cu(X)$ is a closed amenable
subgroup.\qed
\end{theorem}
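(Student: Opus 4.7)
The plan is to mimic the proof of Theorem~\ref{moyennfix}, substituting the ultrafilter compactification $\Cu(X)$ for the combinatorial bordification $\Cr(X)$ and using the cubical analogues already established in the appendix, in particular Lemma~\ref{ap:remarque} and the filtering property of Proposition~\ref{ap:filtering}.

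For the direct implication, I would first construct, by induction on $\dim(X)$, an $\Aut(X)$-equivariant map $\Theta : X \cup \bdfine X \to \Cu(X)$, in parallel with Theorem~\ref{=raff}. A point $x \in X$ is sent to a vertex of the unique minimal cube supporting it; this is equivariant up to passing to a further finite-index subgroup, which is harmless since the cube has finitely many vertices and compact stabiliser. For points in $\bdfine X$, I would use the well-known fact that the transverse space $X_\xi$ at $\xi \in \bd X$ is itself a finite-dimensional \cat cube complex with $\dim(X_\xi) < \dim(X)$, together with the partition $\bdfine X = \bigsqcup_{\xi \in \bd X} \bdfine X_\xi$ and a cubical analogue of the stratification embedding $\Cu(X_\xi) \hookrightarrow \Cu(X)$ (the counterpart of the map $r_\xi$ of Theorem~\ref{thm:stratification}). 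Once $\Theta$ is in place, \cite[Theorem~1.4]{Cap} (or \cite[Theorem~1.7]{CapraceLytchak} in the non locally compact case) provides a finite-index subgroup of $G$ fixing a point in $X \cup \bdfine X$, whose $\Theta$-image is the required fixed point in $\Cu(X)$.

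For the converse, I would fix $f \in \Cu(X)$ and prove, by induction on $\dim(X)$, that $G_f$ fixes a point in $X \cup \bdfine X$; amenability then follows from \cite[Theorem~1.5]{Cap}. The base case $\dim(X) = 0$ and the subcase where $f$ is a vertex are immediate. In the remaining case, consider the family $\mathcal Q_f$ of combinatorial sectors pointing to $f$; these are intersections of half-spaces, hence closed and \cat-convex, and they form a filtering family by Proposition~\ref{ap:filtering} with $\bigcap \mathcal Q_f = \emptyset$ in $X$ (since $f$ is not a vertex). By \cite[Theorem~1.1]{CapraceLytchak} together with \cite[Proposition~1.4]{BalserLytchak}, the intersection $\bigcap_{Q \in \mathcal Q_f} \bd Q$ admits a canonical barycentre $\xi \in \bd X$ which is $G_f$-fixed. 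The action of $G_f$ thus restricts to the transverse space $X_\xi$, and $f$ descends to a well-defined $G_f$-fixed point $f' \in \Cu(X_\xi)$ by restricting the side-data defining $f$ to those walls whose visual boundary contains $\xi$. Since $\dim(X_\xi) < \dim(X)$, the inductive hypothesis yields a $G_f$-fixed point in $X_\xi \cup \bdfine X_\xi$, which embeds into $X \cup \bdfine X$ by definition of the refined visual boundary.

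The main obstacle will be verifying the cubical counterparts of the structural results that Theorem~\ref{thm:stratification} and Lemma~\ref{lem:retraction:3} provide in the building case: namely, that hyperplanes of $X_\xi$ are exactly the images of hyperplanes of $X$ whose visual boundary contains $\xi$, that the ultrafilter data defining $f$ then restricts unambiguously to an ultrafilter $f' \in \Cu(X_\xi)$, and that $\dim(X_\xi) < \dim(X)$. These are essentially combinatorial checks and transfer from the building setting with routine modifications, but they are the technical crux on which the entire induction rests.
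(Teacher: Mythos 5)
Your proposal follows exactly the route the paper intends: it establishes a stratification of $\Cu(X)$ analogous to Theorem~\ref{thm:stratification} and then transplants the proof of Theorem~\ref{moyennfix} verbatim, using Proposition~\ref{ap:filtering} for the filtering property of sectors and the barycentre argument of \cite{CapraceLytchak} and \cite{BalserLytchak} for the inductive step. This is correct and is essentially the same argument the paper sketches (the paper itself only indicates the proof by reference to the building case), and you rightly flag the transfer of the structural facts about $X_\xi$ as the point requiring actual verification.
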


In the special case of a discrete group $G$, this last part was
established independently in~\cite{Niblo}. Remark that, as in the case of buildings, a closed subgroup $H<G$ is amenable if and only if $H / \RLF(H)$ is virtually Abelian (see~\cite{Cap}, as well as \cite[Theorem~1.7]{CapraceLytchak} for the non-locally compact case)

\bibliographystyle{alpha}
\bibliography{comb_cpt}

\newcommand{\etalchar}[1]{$^{#1}$}
\begin{thebibliography}{GdlH90}

\bibitem[AB08]{AB}
Peter Abramenko and Kenneth~S. Brown.
\newblock {\em Buildings}, volume 248 of {\em Graduate Texts in Mathematics}.
\newblock Springer, New York, 2008.
\newblock Theory and applications.

\bibitem[BCG{\etalchar{+}}]{Niblo}
J.~Brodzki, S.J. Campbell, E.~Guentner, G.~Niblo, and N.J. Wright.
\newblock Property {A} and {CAT}(0) cube complexes.
\newblock Journal of Functional Analysis (to appear).

\bibitem[BGS85]{BGS}
Werner Ballmann, Mikha{\"\i}l Gromov, and Viktor Schroeder.
\newblock {\em Manifolds of nonpositive curvature}, volume~61 of {\em Progress
  in Mathematics}.
\newblock Birkh\"auser Boston Inc., Boston, MA, 1985.

\bibitem[BH99]{BH}
Martin~R. Bridson and Andr{\'e} Haefliger.
\newblock {\em Metric spaces of non-positive curvature}, volume 319 of {\em
  Grundlehren der Mathematischen Wissenschaften}.
\newblock Springer-Verlag, Berlin, 1999.

\bibitem[BJ06]{BJ}
Armand Borel and Lizhen Ji.
\newblock {\em Compactifications of symmetric and locally symmetric spaces}.
\newblock Mathematics: Theory \& Applications. Birkh\"auser Boston Inc.,
  Boston, MA, 2006.

\bibitem[BL05]{BalserLytchak}
Andreas Balser and Alexander Lytchak.
\newblock Centers of convex subsets of buildings.
\newblock {\em Ann. Global Anal. Geom.}, 28(2):201--209, 2005.

\bibitem[Bou07a]{BBK}
Nicolas Bourbaki.
\newblock {\em Groupes et Alg{\`e}bres de Lie, Chapitre IV-VI}.
\newblock Springer-Verlag, 2007.

\bibitem[Bou07b]{BBK2}
Nicolas Bourbaki.
\newblock {\em Int{\'e}gration, Chapitre VIII}.
\newblock Springer-Verlag, 2007.

\bibitem[Bro89]{Bro}
Kenneth~S. Brown.
\newblock {\em Buildings}.
\newblock Springer-Verlag, New York, 1989.

\bibitem[BT72]{BT}
F.~Bruhat and J.~Tits.
\newblock Groupes r\'eductifs sur un corps local.
\newblock {\em Inst. Hautes \'Etudes Sci. Publ. Math.}, 41:5--251, 1972.

\bibitem[Cap07]{Cap}
Pierre-Emmanuel Caprace.
\newblock {Amenable groups and Hadamard spaces with a totally disconnected
  isometry group}.
\newblock {\em Comment. Math. Helv}, To appear, 2007.

\bibitem[CH06]{CH}
Pierre-Emmanuel Caprace and Frederic Haglund.
\newblock {On geometric flats in the {${\rm CAT}(0)$} realization of Coxeter
  groups and Tits buildings}.
\newblock {\em Canadian J. Math.}, Preprint, 2006.

\bibitem[CL08]{CapraceLytchak}
Pierre-Emmanuel Caprace and Alexander Lytchak.
\newblock At infinity of finite-dimensional {CAT}(0) spaces.
\newblock Preprint, 2008.

\bibitem[Dav98]{Dav98}
Michael~W. Davis.
\newblock Buildings are {${\rm CAT}(0)$}.
\newblock In {\em Geometry and cohomology in group theory (Durham, 1994)},
  volume 252 of {\em London Math. Soc. Lecture Note Ser.}, pages 108--123.
  Cambridge Univ. Press, Cambridge, 1998.

\bibitem[Deo89]{Deo}
Vinay~V. Deodhar.
\newblock A note on subgroups generated by reflections in {C}oxeter groups.
\newblock {\em Arch. Math. (Basel)}, 53(6):543--546, 1989.

\bibitem[GdlH90]{GdlH}
{\'E}tienne Ghys and Pierre de~la Harpe.
\newblock L'action au bord des isom\'etries.
\newblock In {\em Sur les groupes hyperboliques d'apr\`es Mikhael Gromov (Bern,
  1988)}, volume~83 of {\em Progr. Math.}, pages 135--163. Birkh\"auser Boston,
  Boston, MA, 1990.

\bibitem[GJT98]{GJT}
Yves Guivarc'h, Lizhen Ji, and J.~C. Taylor.
\newblock {\em Compactifications of symmetric spaces}, volume 156 of {\em
  Progress in Mathematics}.
\newblock Birkh\"auser Boston Inc., Boston, MA, 1998.

\bibitem[GR06]{GuR}
Yves Guivarc'h and Bertrand R{\'e}my.
\newblock {Group-theoretic compactification of Bruhat-Tits buildings}.
\newblock {\em Annales scientifiques de l'ENS}, 39:871--920, 2006.

\bibitem[Gur08]{Guralnick}
Dan Guralnick.
\newblock Coarse decompositions for boundaries of {CAT}(0) groups.
\newblock Preprint, 2008.

\bibitem[Kat92]{Ka}
Svetlana Katok.
\newblock {\em Fuchsian groups}.
\newblock Chicago Lectures in Mathematics. University of Chicago Press,
  Chicago, IL, 1992.

\bibitem[Lan96]{Lan}
Erasmus Landvogt.
\newblock {\em A compactification of the {B}ruhat-{T}its building}, volume 1619
  of {\em Lecture Notes in Mathematics}.
\newblock Springer-Verlag, Berlin, 1996.

\bibitem[Lee00]{Lee}
Bernhard Leeb.
\newblock {\em A characterization of irreducible symmetric spaces and
  {E}uclidean buildings of higher rank by their asymptotic geometry}.
\newblock Bonner Mathematische Schriften [Bonn Mathematical Publications], 326.
  Universit\"at Bonn Mathematisches Institut, Bonn, 2000.

\bibitem[Ron89]{Ro}
Mark Ronan.
\newblock {\em Lectures on buildings}, volume~7 of {\em Perspectives in
  Mathematics}.
\newblock Academic Press Inc., Boston, MA, 1989.

\bibitem[Tit74]{Tits74}
Jacques Tits.
\newblock {\em Buildings of spherical type and finite {BN}-pairs}.
\newblock Lecture Notes in Mathematics, Vol. 386. Springer-Verlag, Berlin,
  1974.

\bibitem[Tit81]{Tit}
J.~Tits.
\newblock A local approach to buildings.
\newblock In {\em The geometric vein}, pages 519--547. Springer, New York,
  1981.

\bibitem[Wil95]{Wil}
George Willis.
\newblock Totally disconnected groups and proofs of conjectures of {H}ofmann
  and {M}ukherjea.
\newblock {\em Bull. Austral. Math. Soc.}, 51(3):489--494, 1995.

\end{thebibliography}
\end{document}